\theoremstyle{plain}
\newtheorem{proposition}{Proposition}[section]
\newtheorem*{proposition*}{Proposition}
\newtheorem{lemma}[proposition]{Lemma}
\newtheorem*{proof*}{proof}
\newtheorem{corollary}[proposition]{Corollary}
\newtheorem*{definition*}{Definition}
\newtheorem{theorem}[proposition]{Theorem}
\theoremstyle{remark}
\newtheorem{remark}{Remark}[section]
\newcommand{\Z}{\mathbb{Z}}
\newcommand{\Q}{\mathbb{Q}}
\newcommand{\R}{\mathbb{R}}
\newcommand{\C}{\mathbb{C}}
\renewcommand{\H}{\mathcal{H}}
\newcommand{\SL}{\mathrm{SL}}
\newcommand{\GL}{\mathrm{GL}}
\newcommand{\Mp}{\mathrm{Mp}}
\newcommand{\OO}{\mathrm{O}}
\newcommand{\bp}{{b^+}}
\newcommand{\bm}{{b^-}}
\newcommand{\tp}{{t_+}}
\newcommand{\tm}{{t_-}}
\newcommand{\sigmap}{{\sigma_+}}
\newcommand{\sigmam}{{\sigma_-}}
\newcommand{\be}{\mathbf{e}}
\newcommand{\sig}{\mathrm{sig}}
\title[Theta lifts for indefinite orthogonal groups]
{Theta lifts to certain cohomological representations of indefinite
orthogonal groups}
\author[Takuya Miyazaki]{Takuya Miyazaki}
\author[Yohei Saito]{Yohei Saito}
\address{Department of Mathematics, Faculty of Science and Technology, Keio University, Hiyoshi, Yokohama 223-8522, Japan}
\email{miyazaki@math.keio.ac.jp}
\email{yohei1031@keio.jp}
\begin{document}

\maketitle

\begin{abstract}
 Howe and Tan \cite{HT} investigated a degenerate principal series 
representation of indefinite orthogonal groups $\OO(\bp,\bm)$ and 
explicitly described its composition series. 
They showed that there exists a unique unitarizable irreducible 
submodule $\Pi$, which is isomorphic to a cohomological representation.
In this paper we construct orthogonal automorphic forms belonging to 
$\Pi$ as theta liftings of holomorphic 
$\Mp_2(\R)$ cusp forms by using the Borcherds' method \cite{Bo}. 
We will give a useful observation 
in computing their Fourier expansions that enables us to give 
very precise descriptions of the liftings specially
belonging to $\Pi$.
We will also determine the cuspidal supports of the liftings \cite{MW}, then
confirm the square integrability of those automorphic forms on 
orthogonal groups.
\end{abstract}

\section{Introduction}

Let $V_\Q$ be a vector space over $\Q$ with a quadratic form $Q$ of signature
$(b^+,b^-)$ on $V=V_\Q\otimes\R$ with $b^+b^->1$. 
Let $\OO(V)$ be the orthogonal group.
We take a maximal parabolic subgroup $P$ in $\OO(V)$,
which stabilizes an isotropic line in $V$.
Howe and Tan \cite{HT} investigated
a degenerate principal series representation
$I_\varepsilon(\lambda+\rho)$ of $\OO(V)$ 
induced from $P_\R$ with $\lambda\in \C$, $\varepsilon=\pm$, and
$\rho=\frac{b^++b^-}{2}-1$ (see Section 2 in this paper).
They gave the complete description of its composition series.
In particular when $k=\lambda-\frac{b^+}{2}+\frac{b^-}{2}+1\in \mathbb{N}$
is large enough and $\varepsilon=(-1)^k$, 
there occurs a unique irreducible submodule
$\Pi^{b^+,b^-}_{k,0}$ of $I_\varepsilon(\lambda+\rho)$ that
admits rich properties.
Here we mention two of them:
\begin{itemize}
 \item The underlying $(\frak g, K)$-module of 
$\Pi^{b^+,b^-}_{k,0}$ is identified with
a derived functor module $A_\frak q(\lambda-\rho)$ 
(see \cite[Section 2]{Ko21}). 
Here the Levi subalgebra of $\frak q$ corresponds to a subgroup of 
$\OO(V)\simeq\OO(\bp,\bm)$ isomorphic to $\OO(2)\times\OO(b^+-2,b^-)$.
\item The derived functor module, hence $\Pi^{b^+,b^-}_{k,0}$,
corresponds to a discrete series representation 
of $\Mp_2(\R)$ in the setting of theta correspondence for the
reductive dual pair
$\Mp_2(\R)\times\mathrm{O}(b^+,b^-)$ (see \cite[Section 6]{Li}
and \cite[Section 6]{BMM}).
\end{itemize}
Exploiting the latter property we can construct, at least in principle,
modular forms on $\OO(V)$ belonging to $\Pi^{\bp,\bm}_{k,0}$ 
as the theta liftings 
relevant to an even lattice $L$ of signature $(b^+,b^-)$.
Actually they can be realized as 
liftings of vector-valued modular forms on $\Mp_2(\R)$
by using Borcherds' method \cite{Bo}.

Let $\H^m(\R^\bp)$ be the space of all homogeneous polynomials on $\R^\bp$
of degree $m$ annihilated by the Laplacian 
$\Delta_\bp=\sum_{j=1}^\bp\partial^2/\partial 
x_j^2$; i.e. the  space of homogeneous spherical harmonic polynomials 
on $\R^\bp$ of degree $m$.
As is well known it gives an irreducible $\OO(\bp)$-module.
Then the $K$-types of $\Pi^{b^+,b^-}_{k,0}$, $K\simeq \OO(b^+)\times \OO(b^-)$,
can be given as
$$
\bigoplus_{m,n\in \mathbb{N},~ m-n\in 2\mathbb{N}+k}
\H^m(\R^\bp)\boxtimes\H^n(\R^\bm).
$$
According to this description it becomes important for us to adjust 
the Borcherds' method so that the constructed theta liftings
exactly belong to the minimal $K$-type 
$\H^k(\R^\bp)\boxtimes\H^0(\R^\bm)=\H^k(\R^\bp)\boxtimes \mathbf{1}$ 
of $\Pi^{b^+,b^-}_{k,0}$.
On this issue we will provide a useful identity
$\vartheta_L(f,g; p)=\vartheta_L(f,g; Hp)$ in Corollary {\ref{hpidentity}}
of Theorem \ref{harmonicprojection}.
While we refer to Section 4 for the notation not explained 
here we point out that this identity will serve to fairly simplify 
the work of explicit computation of the Fourier expansions of 
the theta liftings.
Then our main result on the Fourier expansion will be given in Theorem 
\ref{fourierexpansion>2}, where the case $\bp>2$ is treated.

If $\bp=2$, then the module $\Pi^{2,\bm}_{k,0}$ is a discrete series
representation of $\OO(2,\bm)$. 
Then the closely related theta lifting gives a cusp form on $\OO(2,\bm)$ 
and its Fourier expansion was explicitly described by Oda \cite{Od} and 
Rallis and Schiffmann \cite{RS} (see also \cite[Section 14]{Bo}).
In contrast to this when $\bp> 2$ the module $\Pi^{b^+,b^-}_{k,0}$ 
is not a discrete series representation 
(one can see for instance that the associated local Arthur parameter 
is {\it highly non-tempered}, 
see \cite[Sections 4 and 6]{BMM}). 
This fact contributes to the appearance of constant term in Theorem 
\ref{fourierexpansion>2}.
Our goal is then to establish square integrability of the liftings
on $\varGamma_L\backslash \OO(L_\R)$.
To do that we must specify the cuspidal supports of the liftings, \cite{MW},
by examining the degenerate terms of their Fourier expansions.
We will succeed on this, and confirm the square integrability in 
Theorem \ref{theoremonsquareintegrability} by checking the admissible 
growth condition of the exponents of those cuspidal supports.

Until the end of Section 6, only the $K$-module 
$\H^k(\R^\bp)\boxtimes\mathbf{1}$ is exploited to distinguish
the representation theoretic aspects of the theta liftings.
Thus, strictly speaking, it is not certain that the liftings really belong to 
the $\OO(\bp,\bm)$-module $\Pi^{\bp,\bm}_{k,0}$.
This problem can be completely solved in Section 7
by computing the inverse Fourier
transform of a Bessel function $W_\lambda^\kappa(g)$, 
(\ref{archimedeancoefficient}), applying the method of 
Pollack \cite{Po} and Kobayashi-Mano \cite{KM}.
Put another way, this is the explicit computation of the Bessel 
integral attached to the representation $\Pi^{\bp,\bm}_{k,0}$ of
$\OO(\bp,\bm)$, which is interesting in its own right.
Actually Corollary \ref{exactcharacterization}
confirms that our liftings exactly belong to the module
$\Pi^{\bp,\bm}_{k,0}$ as desired.

It is known that the restriction of the module $\Pi^{b^+,b^-}_{k,0}$ 
to $\mathrm{O}(\bp,\bm-1)$ satisfies a nice discrete 
branching law (see \cite[Theorem 1.1 and Example 1.2]{Ko21}).
In the final Section 8, we are particularly interested in 
the restriction of $\OO(3,2)$-module $\Pi^{3,2}_{k,0}$ to $\OO(3,1)$, 
and discuss the corresponding restrictions of
our liftings with a perspective of the {\it theta contractions} discussed in 
\cite{Ma}, \cite{Zeb}.

The paper is organized as follows.
In Section 2 we recall the results in \cite{HT} 
on module structure of degenerate principal series
representations of orthogonal groups and define 
the irreducible submodule $\Pi^{\bp,\bm}_{k,0}$.
In Section 3 we define our theta lifting following \cite{Bo}.
In Section 4, after reviewing the theory of spherical harmonic polynomials
by Vilenkin \cite{Vi}, we will compute the theta liftings of Poincar\'e series.
Then we will obtain Theorem \ref{harmonicprojection} and its Corollary 
\ref{hpidentity}.
By combining these results with Borcherds' general argument we will compute 
the Fourier expansion of the theta lifting in Section 5.
In Section 6 we will confirm the square integrability of the lifting
$\vartheta_L(f)(g)$ by carefully looking at the constant terms of the expansions
along $\Q$-parabolic subgroups of the orthogonal groups.
Section 7 is devoted to computation of inverse Fourier transforms
of the Bessel functions occurring in the Fourier expansion formula
in Theorem \ref{fourierexpansion>2}.
Then we will establish that $\vartheta_L(f)(g)$ exactly belongs
to $\Pi^{\bp,\bm}_{k,0}$.
Finally in Section 8 we describe especially a restriction to $\OO(3,1)$
of $\vartheta_L(f)(g)$ on $\OO(3,2)$ and discuss a relation with the results
by Zemel \cite{Zeb}.

\section{Degenerate principal series on $\mathrm{O}(\bp,\bm)$}
\label{dps}
In this section we recall the results in \cite{HT}
about the module structures of certain 
degenerate principal series representations
of $\OO(\bp,\bm)$.

Let $\bp$ and $\bm$ are integers such that $\bp\bm>1$.
We set $b=\bp+\bm$.
Denote by $V=\R^{b^+,b^-}$ the vector space over $\R$ with the bilinear form
$(~,~)$ defined by 
\begin{equation}\label{V}
(x,y)=\sum_{j=1}^\bp x_jy_j-\sum_{j=\bp+1}^b x_jy_j
\end{equation}
for $x=\sum_{j=1}^b x_je_j$ and $y=\sum_{j=1}^by_je_j\in V$
with an orthogonal basis $\{e_j\mid 1\le j\le b\}$ of $V$.
Thus $V$ has signature $(\bp,\bm)$.
We take a quadratic form $Q(x)=(x,x)/2$, $x\in V$, on $V$.
Let $\OO(V)$ denote the orthogonal group that leaves invariant
the bilinear form.

Take isotropic vectors
\begin{equation}\label{standardisotropic}
z_0=\frac{e_1+e_b}{2}\ \hbox{ and }\  z_0^\ast=e_1-e_b,
\end{equation}
which satisfy $(z_0,z_0^\ast)=1$.
Thus $\langle z_0, z_0^\ast\rangle$ is a real hyperbolic plane in $V$
and we have the orthogonal decomposition 
$V=\langle z_0, z_0^\ast\rangle\oplus V_1$, where 
$V_1=\langle e_j\rangle_{j=2}^{b-1}=\R^{\bp-1,\bm-1}$.
The group $\OO(V)$ contains the maximal parabolic subgroup $P$
stabilizing the isotropic line $\langle z_0\rangle$.
It admits a Levi decomposition $P=MN$, whose unipotent radical
$N$ is abelian and the Levi subgroup $M$ stabilizes the inclusions
$\langle z_0\rangle\subset \langle z_0, z_0^\ast\rangle\subset V$.
This setup provides group isomorphisms 
\begin{equation}\label{leviisomorphismsV}
n_0: V_1\simeq N\ \hbox{ and }\
m_0: \GL(\langle z_0\rangle)\times \mathrm{O}(V_1)\simeq M.
\end{equation}

For $\alpha\in \C$, and a choice $\pm$ of sign,  
we define the character $\chi_\alpha^\pm$ of $P$ by 
putting 
$$
\chi_\alpha^\pm\bigl(n_0(x)m_0(a,g_1)\bigr)=
\begin{cases} 
a^\alpha,\quad &a>0,\\
\pm |a|^\alpha \quad &a<0.
\end{cases}
$$
Let $I_\pm(\alpha)$ be the corresponding induced representation:
\begin{equation}
I_\pm(\alpha)
=\left\{f\in C^\infty(\OO(V))
\mid f(pg)=\chi_\alpha^\pm(p)f(g)
\hbox{ for } p\in P \hbox{ and } g\in \OO(V)\right\}
\label{Def;principal series}
\end{equation}
on which $\mathrm{O}(V)$ acts by the right translations.

We take the orthogonal decomposition $V=V^+\oplus V^-$ by putting 
$V^+=\langle e_j\rangle_{j=1}^\bp$ and $V^-=\langle e_j\rangle_{j=\bp+1}^b$.
It determines the maximal compact subgroup 
$\OO(V^+)\times\OO(V^-)$ of $\OO(V)$.
We also set $V_1^+=V_1\cap V^+$ and $V_1^-=V_1\cap V^-$.
For every $f\in I_\pm(\alpha)$
the action of $(\OO(V^+)\times\OO(V^-))\cap M=\{\pm 1\}\times \OO(V_1^+)\times 
\OO(V_1^-)$ on $f\vert_K$ factors through the subgroup $\{\pm 1\}$.
Hence the Frobenius reciprocity shows
that the $\OO(V^+)\times\OO(V^-)$-types 
of $I_+(\alpha)$ and $I_-(\alpha)$ are equivalent to
\begin{equation}\label{ktypes}
 \bigoplus_{\substack{m,n\in \mathbb{N}\\ m+n: \,\mathrm{even}}}
\H^m(V^+)\boxtimes\H^n(V^-)\ \
\hbox{ and }\bigoplus_{\substack{m,n\in \mathbb{N}\\ m+n: \,\mathrm{odd}}}
\H^m(V^+)\boxtimes\H^n(V^-)
\end{equation}
respectively, where $\H^l(V^\pm)$ denotes the space of spherical 
harmonic polynomials on $V^+=\R^{\bp}$, or $V^-=\R^\bm$, of degree $l$.

Denote by $X^0\subset V$ the light cone of nonzero isotropic vectors
in $V$;
$$
X^0=\{z\in V-\{0\}\mid (z,z)=0\}.
$$
We see that $\OO(V)$ acts transitively on $X^0$ and
hence $X_0$ is the $\OO(V)$-orbit of the point $z_0\in X^0$, 
(\ref{standardisotropic}).
For any $\alpha\in \C$, we define
$$
S^\alpha(X^0)=\left\{f\in C^{\infty}(X^0) \mid 
f(t z)=t^{-\alpha} f(z),\ t\in \R^\times_{>0},\ z\in X^0\right\}
$$
and its two subspaces
$$
S^{\alpha\pm}(X^0)=\{f\in S^\alpha(X^0)\mid f(-z)=\pm f(z)\}.
$$
Each of these spaces is stable under 
the action of $\mathrm{O}(V)$ defined by
$$
g\cdot f(z)= f(g^{-1}z),\quad z\in X^0,\quad g\in \mathrm{O}(V),
$$
and we get the decomposition
$S^\alpha(X^0)=S^{\alpha+}(X^0)\oplus S^{\alpha-}(X^0)$.
Furthermore if we set $\delta(f)(g)=g\cdot f(z)$ 
for $f\in S^{\alpha\pm}(X^{0})$ and $g\in \mathrm{O}(V)$, 
then we can check that 
$\delta(f)(pg)=f(g^{-1}a^{-1}z)=\chi_\alpha^\pm(p)\delta(f)(g)$
for every $p=n_0(x)m_0(a,g_1)\in P$, as well as that
$\delta(f)(gh)=\delta(h\cdot f)(g)$ for $h\in \mathrm{O}(V)$.
Therefore the map $\delta: S^{\alpha\pm}(X^0)\simeq I_\pm(\alpha)$ 
gives an equivalence of $\mathrm{O}(V)$-modules.

Denote by $\mathbb{S}^{\bp-1}\simeq \OO(V^+)/\OO(V_1^+)$ 
the unit sphere in $V^+=\R^\bp$.
There is the well known decomposition
$$
C^\infty(\mathbb{S}^{\bp-1}\times\mathbb{S}^{\bm-1})\simeq
\sum_{m,n\in \mathbb{N}}\H^m(V^+)\boxtimes\H^n(V^-)
$$
into the sum of inequivalent irreducible $\OO(V^+)\times\OO(V^-)$-submodules.
On the other hand for every $x\in V$ let us write $x=x^++x^-$ 
according to the decomposition $V=V^+\oplus V^-$.
Then we have a diffeomorphism $\beta: X^0\to \mathbb{S}^{\bp-1}\times
\mathbb{S}^{\bm-1}\times\R^\times_{>0}$ defined by
$$
\beta(z)=\left(\frac{z^+}{\Vert z^+\Vert},
\frac{z^-}{\Vert z^-\Vert}, \Vert z^+\Vert\right)
$$
with $\Vert w\Vert=\vert (w,w)\vert^{1/2}$ on each quadratic space $V^\pm$
(see \cite[Scholium 2.1]{HT}).
Combining these ingredients together, we can define an 
$\OO(V^+)\times\OO(V^-)$-equivalent map
$$
j_{\alpha,m,n}=j_\alpha:\H^m(V^+)\boxtimes\H^n(V^-)\to 
S^{\alpha+}(X^0)\oplus S^{\alpha-}(X^0)
$$
by putting
\begin{equation}\label{embedding}
j_\alpha(h_1\otimes h_2)(z)=h_1(z^+)h_2(z^-)\Vert z^+\Vert^{-\alpha-m-n},
\  h_1\in \H^m(V^+),\ h_2\in\H^n(V^-)
\end{equation}
(see \cite[Lemma 2.2]{HT}).
Hence by taking composition with $\delta$, 
any $K$-finite element in $I_\pm(\alpha)$ can be written as 
a linear combination of 
\begin{equation}\label{dpssections}
h_1((g^{-1}z)^+)h_2((g^{-1}z)^-)\Vert (g^{-1}z)^+\Vert^{-\alpha-m-n},
\ g\in\mathrm{O}(V),
\end{equation}
with varying $h_1\in\H^m(V^+)$ and $h_2\in \H^n(V^-)$.
 
Now we recall the following result by Howe and Tan \cite[Sections 2 and 3]{HT} 
(see also \cite[Proposition 2.4]{Ko21}):
\begin{proposition}\label{cohomologicalrep}
Suppose $\bp\ge 2$ and $k+\frac{\bp-\bm}{2}>1$ for $k\in \mathbb{N}$. 
Set $\varepsilon=(-1)^k$.
Then $I_{\varepsilon}(k+\bp-2)$ contains
a unique irreducible submodule 
$\Pi^{b^+,b^-}_{k,0}$.
It is unitarizable and its $\OO(V^+)\times\OO(V^-)$-types are given by
$$
\bigoplus_{\substack{m,n\in \mathbb{N}\\ m-n\in 2\mathbb{N}+k}}
\H^m(V^+)\boxtimes\H^n(V^-).
$$
\end{proposition}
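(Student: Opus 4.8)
The assertion is \cite[Sections 2 and 3]{HT} (see also \cite[Proposition 2.4]{Ko21}), so the shortest route is to cite it; here I sketch the argument I would reconstruct. The plan is to work throughout in the light-cone model $I_\varepsilon(\alpha)\simeq S^{\alpha\varepsilon}(X^0)$, with $\alpha=k+\bp-2$. First I would use the embeddings $j_\alpha$ of (\ref{embedding}) to identify, for each $(m,n)\in\mathbb{N}^2$ with $m+n\equiv k\pmod 2$, the $\H^m(V^+)\boxtimes\H^n(V^-)$-isotype of $I_\varepsilon(\alpha)$; by (\ref{ktypes}), together with $\varepsilon=(-1)^k$, these pairs index all the $\OO(V^+)\times\OO(V^-)$-types, each with multiplicity one. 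Thus every $(\mathfrak{g},K)$-submodule is the span of its support, and the problem becomes: which subsets of $\Lambda_\varepsilon:=\{(m,n)\in\mathbb{N}^2:m+n\equiv k\pmod 2\}$ are stable under the action of $\mathfrak{p}$, where $\mathfrak{g}=\mathfrak{k}\oplus\mathfrak{p}$ and $\mathfrak{p}\simeq V$ as a $K$-module? The candidate submodule is the span of $\Lambda_k:=\{(m,n)\in\mathbb{N}^2:m-n\in 2\mathbb{N}+k\}\subset\Lambda_\varepsilon$.

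The second and decisive step is to compute the $\mathfrak{p}$-action on $K$-isotypes. Since $V^{\pm}\otimes\H^{m}(V^{\pm})$ contains $\H^{m\pm 1}(V^{\pm})$, the operator
\[
\mathfrak{p}\otimes\bigl(\H^{m}(V^{+})\boxtimes\H^{n}(V^{-})\bigr)\longrightarrow I_\varepsilon(\alpha)
\]
has image in the four neighbours $\H^{m\pm 1}(V^{+})\boxtimes\H^{n\pm 1}(V^{-})$ only, each component a fixed nonzero $K$-intertwiner times a scalar $c^{\pm\pm}_{m,n}(\alpha)$; in the model these are first-order ``gradient'' operators, so the $c^{\pm\pm}_{m,n}(\alpha)$ come out as explicit products of linear factors in $m$, $n$, $\alpha$. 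Performing this computation and reading off the zero loci is the substance of \cite{HT} and will be the main obstacle. What I would extract from it: for $\alpha=k+\bp-2$ and $(m,n)$ inside $\Lambda_k$, every scalar $c^{\pm\pm}_{m,n}(\alpha)$ is nonzero, except that the scalar $c^{-+}_{m,n}(\alpha)$ of the one move $(m,n)\mapsto(m-1,n+1)$ that lowers $m-n$ vanishes exactly when $m-n=k$; and this non-degeneracy is precisely what the hypothesis $k+\frac{\bp-\bm}{2}>1$ guarantees.

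Granting this, the proposition follows formally. Since the only move escaping $\Lambda_k$ downward is killed on the edge $m-n=k$, the space
\[
\Pi^{\bp,\bm}_{k,0}:=\bigoplus_{(m,n)\in\Lambda_k}\H^{m}(V^{+})\boxtimes\H^{n}(V^{-})
\]
is $\mathfrak{p}$-stable, hence a $(\mathfrak{g},K)$-submodule, hence (via $\delta$) an $\OO(V)$-submodule, with exactly the stated $K$-types. The nonvanishing of the remaining scalars lets me, starting from the corner $(k,0)$, reach every $(m,n)\in\Lambda_k$ by iterated $(+1,+1)$- and $(+1,-1)$-moves, so $\Pi^{\bp,\bm}_{k,0}$ is generated by its minimal $K$-type $\H^{k}(V^{+})\boxtimes\mathbf{1}$; and from an arbitrary $(m,n)\in\Lambda_k$ one returns to $(k,0)$ through nonzero $(-1,-1)$- and $(-1,+1)$-moves, so $\Pi^{\bp,\bm}_{k,0}$ is irreducible and $(k,0)$ lies in each of its nonzero submodules. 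A final look at the full zero set of the $c^{\pm\pm}_{m,n}(\alpha)$ over all of $\Lambda_\varepsilon$ shows that every nonzero submodule of $I_\varepsilon(\alpha)$ already contains the corner $(k,0)$; therefore $\Pi^{\bp,\bm}_{k,0}$ is the unique irreducible submodule.

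For unitarizability I would note that the $(\mathfrak{g},K)$-module just produced is the cohomologically induced module $A_{\mathfrak{q}}(\lambda-\rho)$ attached to the $\theta$-stable parabolic whose Levi corresponds to $\OO(2)\times\OO(\bp-2,\bm)$ (\cite[Section 2]{Ko21}), and that $k+\frac{\bp-\bm}{2}>1$ places the inducing character in the (weakly) fair range, where $A_{\mathfrak{q}}(\cdot)$ is unitarizable by Vogan--Zuckerman; alternatively one invokes preservation of unitarity under the $\Mp_2(\R)\times\OO(\bp,\bm)$ theta correspondence applied to a discrete series of $\Mp_2(\R)$ (\cite[Section 6]{Li}, \cite[Section 6]{BMM}), or the invariant Hermitian form exhibited directly in \cite{HT}. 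Besides the scalar computation, the only delicate point is to confirm that $k+\frac{\bp-\bm}{2}>1$ is exactly the threshold making $\Pi^{\bp,\bm}_{k,0}$ both a genuine submodule --- rather than merely a subquotient --- and unitarizable.
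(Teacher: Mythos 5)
The paper gives no proof of this proposition---it is recalled verbatim from Howe--Tan \cite[Sections 2 and 3]{HT} (see also \cite[Proposition 2.4]{Ko21})---and your proposal takes the same route, citing \cite{HT} and adding a sketch that faithfully reproduces the transition-coefficient argument used there (multiplicity-one $K$-types in the light-cone model, the four-neighbour action of $\mathfrak{p}$, and the vanishing of the single $(m,n)\mapsto(m-1,n+1)$ coefficient on the edge $m-n=k$). One small slip: as a $K$-module $\mathfrak{p}\simeq V^+\otimes V^-$, not $V$; this is what actually produces the neighbours $\H^{m\pm1}(V^+)\boxtimes\H^{n\pm1}(V^-)$ that you correctly use in the rest of the argument.
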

Here we particularly focus on the component 
\begin{equation}\label{theminimalktype}
\H^{k,0}(V)=\H^k(V^+)\boxtimes\H^0(V^-)=\H^k(V^+)\boxtimes\mathbf{1}
\end{equation}
in the above $\OO(V^+)\times\OO(V^-)$-types, which we call 
{\it the minimal $\OO(V^+)\times\OO(V^-)$-type of $\Pi^{b^+,b^-}_{k,0}$}.
The observation (\ref{dpssections}) implies that 
any element in the minimal $\OO(V^+)\times\OO(V^-)$-type 
can be written as 
\begin{equation}\label{basisofminimalK}
f(g; h)=h((g^{-1}z)^+)\Vert (g^{-1}z)^+\Vert^{-2k-\bp+2},\quad 
g\in\mathrm{O}(V),
\end{equation}
with varying $h\in \H^k(V^+)$.

\section{Theta liftings}
\subsection{Generalized theta series}
Let $L$ be a nondegenerate even lattice of signature $(\bp,\bm)$
(signature of $L_\R=L\otimes\R$ understood throughout).
Put $\sig(L)=\bp-\bm$.
Denote by $L'$ the dual of $L$;
$L'=\{\lambda\in L_\Q=L\otimes\Q\mid (\lambda,\mu)\in \Z\hbox{ for all }\mu
\in L\}$.
We set $q(\lambda)=(\lambda,\lambda)/2$ for $\lambda\in L_\Q$
whose restriction to $L$ then takes values in $\Z$.
Moreover the modulo $1$ reduction of $q$ on $L'$ 
gives a $\Q/\Z$-valued quadratic form
on the finite discriminant group $D_L=L'/L$, whose associated
$\Q/\Z$-valued bilinear form coincides with
the modulo $1$ reduction of the bilinear form on $L'$.

The group $\OO(L)$ of automorphisms of $L$ gives
a discrete subgroup of the Lie group $\OO(L_\R)$.
Let $\mathrm{SO}^+(L_\R)$ be the connected component of $\OO(L_\R)$
and set $\mathrm{SO}^+(L)=\OO(L)\cap \mathrm{SO}^+(L_\R)$. 
Since $L'$ is stable under $\mathrm{SO}^+(L)$, we find that
$\mathrm{SO}^+(L)$ acts on $D_L$.
Then we define the discriminant kernel
\begin{equation}\label{discriminantkernel}
\varGamma_L=\{g\in \mathrm{SO}^+(L)\mid g\lambda-\lambda\in L
\hbox{ for all }\lambda\in L'\}.
\end{equation}

We fix an isometric isomorphism $v_0$ from $(L_\R, q)$ to the space 
$(V,Q)$ defined in (\ref{V}).
For every $x\in L_\R$, we write $v_0(x)=v_0^+(x)+v_0^-(x)$ 
according to the decomposition $V=V^+\oplus V^-$.
On the vector space $V$ we shall consider the Laplacian
$\Delta_b=\sum_{j=1}^b\partial^2/\partial x_j^2$ 
associated with the definite
form $\sum_{j=1}^b x_j^2$ on $\R^b$.
We note that it differs from
the Laplacian of $(V,Q)$ invariant under $\OO(V)$.
Denote by $\mathcal{P}^m(V)$ be the space of all homogeneous polynomials
on $V$ of degree $m$.
We say that $p\in\mathcal{P}^m(V)$ is homogeneous of degree $(m^+,m^-)$,
if $m=m^++m^-$ and $p$ is homogeneous of degree $m^+$ in
the first $\bp$ variables of $V^+$, and of degree $m^-$ in 
the last $\bm$ variables of $V^-$.
We denote by $\mathcal{P}^{m^+,m^-}(V)$ the subspace of 
all such polynomials.
It should be emphasized that 
$\H^{k,0}(V)=\H^k(V^+)\boxtimes\mathbf{1}$ defined in (\ref{theminimalktype})
is regarded as a subspace of $\mathcal{P}^{k,0}(V)$.

Now we take $p(x)\in \mathcal{P}^{k,0}(V)$.
Then for $\tau=x+iy\in\mathbb{H}_1$ ($=$ the complex upper half plane),
$g\in \mathrm{O}(L_\R)$,
and $\gamma\in D_L$, we define
\begin{align}\label{theta}
\theta_{L+\gamma}(\tau,g;p)=y^{\frac{\bm}{2}}\sum_{\lambda\in L+\gamma}
&\left[\mathrm{exp}\left(-\frac{\Delta_b}{8\pi y}\right)p\right]
\bigl(v_0(g^{-1}\lambda)\bigr)\\
&\times\be\left(\tau Q(v_0^+(g^{-1}\lambda))+\overline{\tau}
Q(v_0^-(g^{-1}\lambda))\right),  \notag
\end{align}
where $\be(x)=e^{2\pi ix}$ for $x\in \C$.
It will be useful to introduce a more general theta function defined by
 \begin{multline}\label{generaltheta}
 \theta_{L+\gamma}\left(\tau,g;
\begin{pmatrix} \alpha\\ \beta\end{pmatrix}; p\right)
= y^{\frac{\bm}{2}}\sum_{\lambda\in L+\gamma}
\left[\mathrm{exp}\left(-\frac{\Delta_b}{8\pi y}\right)p\right]
\bigl(v_0(g^{-1}(\lambda+\beta))\bigr)\\
\times \be\left(\tau Q\bigl(v_{0}^{+}(g^{-1}(\lambda+\beta))\bigr)+
\overline{\tau}Q\bigl(v_{0}^{-}(g^{-1}(\lambda+\beta))\bigr)-
\bigl(\lambda+\frac{\beta}{2},\alpha\bigr)\right)
 \end{multline}
with additional $\alpha$, $\beta\in L_\R$.

Denote by $\Mp_2(\R)$ the two-fold metaplectic 
cover of $\SL_2(\R)$. 
It can be written as the group of pairs $(A,\phi(\tau))$, 
where $A=\bigl(\begin{smallmatrix}a & b \\ 
                                  c & d \end{smallmatrix}\bigl)
\in\SL_2(\R)$ 
and $\phi: \mathbb{H}_1\to\C$ is a holomorphic function on 
$\mathbb{H}_1=\{z\in\C\mid \mathrm{Im}(z)>0\}$ 
satisfying $\phi(\tau)^2=c\tau +d$. 
The product rule is defined by
$(A,\phi_1(\tau))\cdot(B,\phi_2(\tau))=(AB,\phi_1(B\tau)\phi_2(\tau))$,
where $A\tau=(a\tau+b)(c\tau+d)^{-1}$ is the standard action of 
$\SL_2(\R)$ on $\mathbb{H}_1$.
We write $\Mp_2(\Z)$ for 
the inverse image of $\SL_2(\Z)$ under 
the covering map.
Given an even lattice $L$, we let $\mathfrak{e}_\gamma$ for
$\gamma\in D_L$ be the basis of $\C[D_L]$ so that 
$\mathfrak{e}_\gamma\mathfrak{e}_\delta=\mathfrak{e}_{\gamma+\delta}$.
We then put an inner product $\langle\cdot,\cdot\rangle_L$ 
on $\C[D_L]$ as 
\begin{equation}\label{innerweil}
 \Bigl\langle\sum_{\gamma\in D_L}a_{\gamma}\mathfrak{e}_\gamma,
\sum_{\delta\in D_L}b_\delta\mathfrak{e}_\delta\Bigr\rangle_L
=\sum_{\gamma\in D_L}a_\gamma\overline{b_\gamma}.
\end{equation}
Recall the Weil representation 
$\rho_L: \Mp_2(\Z)\to \hbox{Aut }\C[D_L]$ 
associated with $L$ defined by
$$
\rho_L(T)\mathfrak{e}_\gamma=\mathbf{e}(q(\gamma))\mathfrak{e}_\gamma,
\quad  
\rho_L(S)\mathfrak{e}_\gamma=
\frac{\mathbf{e}(-\sig(L)/8)}{\sqrt{|D_L|}}
\sum_{\delta\in D_L} \mathbf{e}(-(\gamma,\delta))\mathfrak{e}_\delta,
$$
where $S=\bigl(\left(\begin{smallmatrix} 0 & -1 \\ 
                          1 & 0\end{smallmatrix}\bigr), \sqrt{\tau} \right)$ 
and $T=\bigl(\left(\begin{smallmatrix} 1& 1\\ 0 &1
\end{smallmatrix}\right), 1\bigr)$,
the standard generators of $\Mp_2(\Z)$.
It is unitary with respect to the inner product (\ref{innerweil}).

Now we define the $\C[D_L]$-valued theta function 
\begin{equation}\label{weiltheta}
\Theta_L\left(\tau,g; \begin{pmatrix}\alpha\\ \beta			  
			 \end{pmatrix};p\right)=\sum_{\gamma\in D_L}
\theta_{L+\gamma}\left(\tau,g;\begin{pmatrix}\alpha\\ \beta
		       \end{pmatrix};p\right)\mathfrak{e}_{\gamma}.
\end{equation}
If $\alpha=\beta=0$, then we will write $\Theta_L\left(\tau,g;
\left(\begin{smallmatrix} 0 \\ 0       
      \end{smallmatrix}\right);p\right)$ simply as 
$\Theta_{L}(\tau,g;p)$.
Since $\varGamma_L$ acts on $D_L$ trivially, we see that
$\Theta_{L}(\tau,g;p)$ is left $\varGamma_{L}$-invariant.
On the other hand, \cite[Theorem 4.1]{Bo} shows that 
(\ref{weiltheta}) satisfies the following 
transformation formula:
\begin{proposition}\label{thetatransformation}
Let $k\in\mathbb{N}$
and suppose $p(x)\in\mathcal{P}^{k,0}(V)$.
Then
\begin{equation}
\Theta_L\left(M\tau,g; \begin{pmatrix} a\alpha+b\beta\\ c\alpha+d\beta
\end{pmatrix}; 
p\right)=\phi(\tau)^{2k+\sig(L)}\rho_L(M,\phi)
\Theta_L\left(\tau,g;\begin{pmatrix} \alpha \\ \beta
		 \end{pmatrix}; p\right)
\end{equation}  
for every $(M,\phi)\in\mathrm{Mp}_2(\mathbb{Z})$ with 
$M=\bigl(\begin{smallmatrix} a &b\\c &d \end{smallmatrix}\bigl)\in \SL_2(\Z)$.
\end{proposition}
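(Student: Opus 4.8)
This is the specialization to polynomials homogeneous of degree $(k,0)$ of the transformation formula \cite[Theorem 4.1]{Bo}; the plan is to reproduce Borcherds' argument. Since $S$ and $T$ generate $\Mp_2(\Z)$, and since the validity of the asserted identity for two elements of $\Mp_2(\Z)$ forces it for their product and inverses --- this uses only that $\rho_L$ is a genuine representation, that $(M,\phi)\mapsto\phi(\tau)^{2k+\sig(L)}$ obeys the metaplectic cocycle relation, and that $(\alpha,\beta)\mapsto M\cdot(\alpha,\beta)$ is compatible with matrix multiplication --- it suffices to verify the formula for $T$ and for $S$.

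For $T$ the verification is a direct manipulation of (\ref{generaltheta}). Replacing $\tau$ by $\tau+1$ leaves $y=\mathrm{Im}(\tau)$, and hence the prefactor $y^{\bm/2}$ and the heat operator $\exp(-\Delta_b/(8\pi y))$, untouched, and multiplies the $\lambda$-th summand by $\be\bigl(Q(v_0^+(g^{-1}(\lambda+\beta)))+Q(v_0^-(g^{-1}(\lambda+\beta)))\bigr)=\be\bigl(q(\lambda+\beta)\bigr)$, using that $v_0\circ g^{-1}$ is an isometry of $(L_\R,q)$ onto $(V,Q)$ and that $V^+\perp V^-$ in $V$. The simultaneous substitution $\alpha\mapsto\alpha+\beta$ introduces the factor $\be\bigl(-(\lambda+\tfrac{\beta}{2},\beta)\bigr)$; since $q(\lambda+\beta)-(\lambda+\tfrac{\beta}{2},\beta)=q(\lambda)$ and $q(\lambda)\equiv q(\gamma)\pmod 1$ for $\lambda\in L+\gamma$, the $\gamma$-component of $\Theta_L$ is multiplied exactly by $\be(q(\gamma))$, which is the action of $\rho_L(T)$; and $\phi(\tau)^{2k+\sig(L)}=1$, so the $T$-identity follows.

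The case of $S$ is the substance of the proof and rests on Poisson summation. Each $\gamma$-component of (\ref{weiltheta}) is the sum over the lattice $L$ of the Schwartz function $\Phi_\tau$ on $L_\R$ whose value at $x$ is $y^{\bm/2}\bigl[\exp(-\Delta_b/(8\pi y))p\bigr](v_0(g^{-1}x))\,\be\bigl(\tau Q(v_0^+(g^{-1}x))+\overline{\tau}\,Q(v_0^-(g^{-1}x))-(x-\tfrac{\beta}{2},\alpha)\bigr)$, which is of Schwartz class because the real part of the exponent is governed by the positive-definite majorant $Q(v_0^+)-Q(v_0^-)$. Applying Poisson summation over $L$ (the covolume of $L$ being $\sqrt{|D_L|}$), evaluating the resulting Fourier transform $\widehat{\Phi_\tau}$ by the classical formula of Hecke and Bochner for the Fourier transform of a harmonic polynomial times a Gaussian on a definite quadratic space --- applied separately on $V^+$ (parameter $\tau$, carrying the degree-$k$ part of $p$) and on $V^-$ (parameter $\overline{\tau}$, carrying the degree-$0$ part), in the refined form that incorporates the operator $\exp(-\Delta/(8\pi y))$ when $p$ is not harmonic, which is what that operator is built to make work --- and then regrouping the resulting sum over $L'$ into $\C[D_L]$-components, one identifies $\Theta_L(-1/\tau,g;(-\beta,\alpha);p)$ with $\phi(\tau)^{2k+\sig(L)}\rho_L(S)\Theta_L(\tau,g;(\alpha,\beta);p)$. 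In this identification the Fourier transform is what replaces $\tau$ by $-1/\tau$ inside the exponent and exchanges a translation by $\beta$ with multiplication by the character $x\mapsto\be((x,\beta))$, so that the shift pair $(\alpha,\beta)$ turns into $(-\beta,\alpha)$; the Hecke--Bochner formula contributes the fourth-root-of-unity factor $i^{-k}$ from the degree-$k$ harmonic part, together with signature-dependent powers of $\tau$ and $\overline{\tau}$ which, once combined with $y^{\bm/2}=\bigl((\tau-\overline{\tau})/2i\bigr)^{\bm/2}$ and the branch of the square root tracked through the metaplectic data, reassemble into the holomorphic automorphy factor $\phi(\tau)^{2k+\sig(L)}$; and the reorganization over $L'/L$, together with the Gauss--Milgram identity $\sum_{\delta\in D_L}\be(q(\delta))=\sqrt{|D_L|}\,\be(\sig(L)/8)$, produces the scalar $\be(-\sig(L)/8)/\sqrt{|D_L|}$ and the entries $\be(-(\gamma,\delta))$ of $\rho_L(S)$.

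The main obstacle is this $S$-computation: carrying out the Fourier transform of $\Phi_\tau$ uniformly in $\tau\in\mathbb{H}_1$, correctly combining the Hecke--Bochner identities on $V^+$ and $V^-$ with their opposite-sign Gaussians, and above all pinning down the branch of $\tau^{k+\sig(L)/2}$ so that the automorphy factor comes out as $\phi(\tau)^{2k+\sig(L)}$ and not merely correct up to a fourth root of unity --- it is precisely to control this branch that one works in $\Mp_2$ and carries the square root $\phi$ along. The reduction to generators, the $T$-case, and the bookkeeping over $D_L$ are formal. Since $p$ has degree $(k,0)$, no phenomenon beyond those already treated in \cite[Theorem 4.1]{Bo} occurs, so the proposition follows from that theorem with $(m^+,m^-)=(k,0)$.
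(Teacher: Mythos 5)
Your proposal is correct and matches the paper exactly: the paper offers no independent proof of this proposition but simply quotes it as the specialization of \cite[Theorem 4.1]{Bo} to polynomials of degree $(k,0)$, which is precisely your conclusion, and your sketch of the reduction to the generators $S$, $T$ with Poisson summation and the Hecke--Bochner transform in the $S$-case is a faithful outline of Borcherds' own argument. The only quibble is that the eighth root of unity $\be(-\sig(L)/8)$ arises directly from the Gaussian Fourier transforms on $V^+$ and $V^-$ (with $1/\sqrt{|D_L|}$ coming from the covolume of $L$ in Poisson summation) rather than from the Gauss--Milgram identity, but this does not affect the validity of the argument.
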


\subsection{Lifts of vector-valued modular forms}
Let $\nu\in \frac12\Z$ and assume that $\nu\ge \frac32$. 
A $\C[D_L]$-valued holomorphic function $f$ on $\mathbb{H}_1$
is called a holomorphic modular form of weight $\nu$ and type
$\rho_L$ for the group $\Mp_2(\Z)$, if 
$f(M\tau)=\phi(\tau)^{2\nu}\rho_L(M,\phi)f(\tau)$
for every $(M,\phi)\in \Mp_2(\Z)$ and $f$ is holomorphic at the cusp $i\infty$.
Such a modular form has a Fourier expansion of the form
\begin{equation}\label{ellipticfourier}
f(\tau)=\sum_{\gamma\in D_L}f_{L+\gamma}(\tau)\mathfrak{e}_\gamma
=\sum_{\gamma\in D_L}\sum_{n\in q(\gamma)+\Z}
c(n,\gamma)\be(n\tau)\mathfrak{e}_\gamma,
\end{equation}  
where $c(n,\gamma)=0$ if $n<0$.
If furthermore $c(0,\gamma)=0$ for all $\gamma\in D_L$, 
then we say that $f(\tau)$ is cuspidal. 
We denote by $S_\nu(D_L)$ the space of holomorphic 
cusp forms of weight $\nu$ and type $\rho_L$.
If $f$, $g$ are modular forms of the same weight $\nu$ and type $\rho_L$,
and at least one of $f$ and $g$ is cuspidal,
then we can consider their Petersson inner product
\begin{equation}\label{petersson}
 (f,g)_\tau=\int_{\SL_2(\Z)\backslash \mathbb{H}_1}
\bigl\langle f(\tau),g(\tau)\bigr\rangle_L y^\nu\,\frac{dxdy}{y^2}.
\end{equation}

Now set $\nu=k+\frac{\sig(L)}{2}$ with 
$k\in\mathbb{N}$ and suppose that $\nu>\frac32$.
Then Proposition \ref{thetatransformation} allows us to define
\begin{equation}\label{thetalifting}
\vartheta_L(f; p)(g)=(f(\tau), \Theta_L(\tau,g; p))_\tau
\end{equation}
for every $f(\tau)\in S_\nu(D_L)$ and 
$p(x)\in\mathcal{P}^{k,0}(V)$. 
Here it should be noted that $\Theta_L(\tau,g; p)$ is not holomorphic 
in $\tau$ when $\bm> 0$, but the definition 
(\ref{petersson}) still has a meaning.
Consequently the pairing (\ref{thetalifting}) 
gives a left $\varGamma_L$-invariant function on $\OO(L_\R)$, and 
we will investigate it, a lifting of $f(\tau)$ to $\OO(L_\R)$,
precisely in the subsequent sections.

\section{Theta lifts of Poincar\'e series} 
In this section we will consider the liftings (\ref{thetalifting}) 
of Poincar\'e series belonging to $S_\nu(D_L)$ 
by exploiting the theory of harmonic polynomials,
\cite{Vi}.

\subsection{Representations of $\OO(n)$}\label{vilenkinharmonic}
The space $\mathcal{P}^{k,0}(V)$ is naturally identified 
with $\mathcal{P}^k(V^+)$ of homogeneous polynomials of degree 
$k$ on $V^+=\R^\bp$, on which $\OO(V^+)$ acts.
Then we recall the $\OO(n)$-module structure of $\mathcal{P}^k(\R^n)$
described by Vilenkin \cite[Chapter IX]{Vi}.

Consider the Euclidean space $\R^n$ with the standard orthonormal basis
$\{e_j\}_{j=1}^n$, where $e_j$ is the unit vector with $1$ 
on $j$-th entry, $0$ others.
We write $x=\sum x_ie_i\in\R^n$ 
as $x={}^t(x_1,\ldots,x_n)$ with $\|x\|^2
=\sum_{j=1}^nx_j^2$ positive definite. 
The Laplacian on $\R^n$ is given by 
$\Delta_n=\sum_{j=1}^n\partial^2/\partial x_j^2$.
It is invariant under the $\OO(n)$-action
$g\cdot f(x)=f(g^{-1}x)$, $f\in \mathcal{P}^k(\R^n)$.
Let $\H^k(\R^{n})$ be the subspace of harmonic polynomials in 
$\mathcal{P}^k(\R^n)$;
$$
\H^k(\R^n)=\{f\in \mathcal{P}^k(\R^n)\mid \Delta_n f=0\}.
$$
As is well known, $\H^{k}(\R^{n})$ is 
an irreducible $\OO(n)$-module 
and $\mathcal{P}^k(\R^n)$ is decomposed into the sum
\begin{equation}\label{decomposition-poly} 
\mathcal{P}^k(\R^n)=\bigoplus_{0\leq j \leq \left[\frac{k}{2}\right]}
\|x\|^{2j}\H^{k-2j}(\R^n).
\end{equation}
For every $p(x)\in \mathcal{P}^k(\R^n)$ let us set 
\begin{equation}\label{hpoperator}
Hp(x)=\sum_{j=0}^{\left[\frac{k}{2}\right]}
\frac{(-1)^j\Gamma\left(\frac{n}{2}+k-j-1\right)}
{4^jj!\Gamma\left(\frac{n}{2}+k-1\right)}\|x\|^{2j}
\Delta_n^jp(x).
\end{equation}
Then $Hp(x)\in \H^k(\R^n)$ and $H(Hp)=Hp$.
Therefore the linear map $H$ gives the orthogonal projection from 
$\mathcal{P}^k(\R^n)$ onto $\H^k(\R^n)$ in 
(\ref{decomposition-poly}) (see \cite[p.446, (15)]{Vi}). 

Next we consider the decomposition of $\H^k(\R^n)$ 
restricted to the subgroup $\OO(n-1)$ that stabilizes
the point ${}^t(1, 0,\ldots,0)\in\R^n$. 
For each fixed integer $l$, the polynomials $H(x_1^{k-l}h(x'))$ with varying
$h(x')\in\H^l(\R^{n-1})$, $x'={}^t(x_2, \ldots,x_n)\in \R^{n-1}$,
span a subspace of $\H^k(\R^n)$ that we denoted by $\H^{nkl}$.
According to \cite[Chapter IX, \S 3.5]{Vi}, we then obtain
\begin{equation}\label{decompositionharmonicspace}
\H^k(\R^n)=\bigoplus_{l=0}^k \H^{nkl},
\end{equation}
which is indeed the irreducible decomposition of $\H^k(\R^n)$
as an $\OO(n-1)$-module.
By definition (\ref{hpoperator}), we see that
\begin{equation}\label{onestepprojection}
H(x_1^{k-l}h(x'))=h(x')Hx_1^{k-l}
=\frac{(k-l)!}{2^{k-l}}\|x\|^{k-l}C_{k-l}^{\frac{n-2}{2}+l}
\left(\frac{x_1}{\|x\|}\right)h(x') 
\end{equation}
(see \cite[p. 465, (9)]{Vi}), 
where $C_n^\lambda(z)$ $(\lambda>0)$ is the Gegenbauer polynomial
$$
C_n^\lambda(z)=\sum_{j=0}^{\left[\frac{n}{2}\right]}\frac{(-1)^k
\Gamma(\lambda+n-j)}{j!(n-2j)!\Gamma(\lambda)}(2z)^{n-2j}.
 $$
This and (\ref{decompositionharmonicspace}) then imply that
any element of $\mathcal{H}^{k}(\R^{n})$ is written as a 
linear sum of products of $h\in \mathcal{H}^{k-l}(\R^{n-1})$ 
and the Gegenbauer polynomials. 
By induction on $n$, this yields 
\begin{proposition}\label{harmonicbasis}{\rm (\cite[p.466, (2)]{Vi})}
Suppose $n\ge 2$ and take the standard ordered orthonormal basis 
$\{e_j\}_{j=1}^n$ of $\R^n$ as above.
Then $\H^k(\R^n)$ has a basis $\{h_\kappa^n(x)\}_\kappa$
consisting of the polynomials 
\begin{equation*}
h_\kappa^n(x)=(x_{n-1}\pm i x_n)^{k_{n-2}}\prod_{j=0}^{n-3}
\Vert x\Vert_j^{k_j-k_{j+1}}C_{k_j-k_{j+1}}^{k_{j+1}+\frac{n-j-2}{2}}
\left(\frac{x_{j+1}}{\Vert x\Vert_j}\right),
\end{equation*}
where $\Vert x\Vert^2_j=x_{j+1}^2+\cdots+x_n^2$ and 
$\kappa=(k_0,k_1,\ldots,k_{n-3}, \pm k_{n-2})$ varies over all 
multi-indices in $\Z^{n-1}$ such that $k=k_0\ge k_1\ge \cdots\ge k_{n-2}\ge 0$. 
If $n=2$, we understand that $\kappa=\pm k$ with an integer $k\ge 0$, and
$h^2_{\pm k}(x)=(x_1\pm ix_2)^k$.
\end{proposition}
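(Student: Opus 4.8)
The plan is to induct on $n$, using the $\OO(n-1)$-decomposition (\ref{decompositionharmonicspace}) and the one-step projection formula (\ref{onestepprojection}) already recorded above. The base case $n=2$ is immediate and is built into the statement: $\H^k(\R^2)$ is spanned by $(x_1\pm ix_2)^k$, which is the asserted family for $\kappa=\pm k$. So suppose $n\ge 3$ and that the proposition holds with $n$ replaced by $n-1$, for every value of the degree; write $x'={}^t(x_2,\dots,x_n)$ and let $\OO(n-1)$ be the stabilizer of $e_1$, acting on the $x'$-variables.

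By (\ref{decompositionharmonicspace}), $\H^k(\R^n)=\bigoplus_{l=0}^k\H^{nkl}$ is the decomposition into irreducible $\OO(n-1)$-submodules, where $\H^{nkl}$ is the image of the $\OO(n-1)$-equivariant linear map $T_{k,l}\colon\H^l(\R^{n-1})\to\H^k(\R^n)$, $h\mapsto H(x_1^{k-l}h(x'))$; here $x_1^{k-l}$ is $\OO(n-1)$-invariant and $H$ is $\OO(n)$-equivariant, being the projection onto the summand $\H^k(\R^n)$ in (\ref{decomposition-poly}). Since each $\H^{nkl}$ is nonzero, $T_{k,l}$ is a nonzero equivariant map out of the irreducible module $\H^l(\R^{n-1})$, hence injective; so $T_{k,l}$ carries any basis of $\H^l(\R^{n-1})$ onto a basis of $\H^{nkl}$. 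Feeding in the basis $\{h_\mu^{n-1}(x')\}_\mu$ supplied by the induction hypothesis and applying (\ref{onestepprojection}), we find that
\[
\Bigl\{\,\|x\|^{k-l}C_{k-l}^{\frac{n-2}{2}+l}\!\left(\tfrac{x_1}{\|x\|}\right)h_\mu^{n-1}(x')\,\Bigr\}_\mu
\]
is a basis of $\H^{nkl}$ (the nonzero constant $\tfrac{(k-l)!}{2^{k-l}}$ in (\ref{onestepprojection}) being discarded). As the summands $\H^{nkl}$, $0\le l\le k$, are in direct sum and span $\H^k(\R^n)$, the union over $l$ of these bases is a basis of $\H^k(\R^n)$.

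Finally I would identify this union with the family $\{h_\kappa^n\}_\kappa$ of the statement. The admissible multi-indices for $\R^n$ are exactly those $\kappa=(k_0,k_1,\dots,k_{n-3},\pm k_{n-2})$ obtained from an admissible multi-index $\mu=(k_1,\dots,k_{n-3},\pm k_{n-2})$ for $\H^l(\R^{n-1})$, $l=k_1$, by prepending $k_0:=k$ (so that $k=k_0\ge k_1\ge\cdots\ge k_{n-2}\ge 0$), with $l$ ranging over $0,\dots,k$. A direct substitution, using $\|x\|_0^2=x_1^2+\cdots+x_n^2$ and the fact that in the shifted $x'$-variables the norms occurring in $h_\mu^{n-1}$ read $\|x\|_j^2=x_{j+1}^2+\cdots+x_n^2$, shows that $\|x\|^{k-l}C_{k-l}^{\frac{n-2}{2}+l}(x_1/\|x\|)\,h_\mu^{n-1}(x')$ is precisely $h_\kappa^n(x)$: the extra factor supplies the $j=0$ term $\|x\|_0^{k_0-k_1}C_{k_0-k_1}^{k_1+\frac{n-2}{2}}(x_1/\|x\|_0)$, and the Gegenbauer orders $k_{j+1}+\frac{n-j-2}{2}$ of the remaining terms are left unchanged because decreasing both $n$ and the term-index by $1$ preserves $\frac{n-j-2}{2}$. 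Hence $\{h_\kappa^n\}_\kappa$ is a basis of $\H^k(\R^n)$, which closes the induction.

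The only step requiring real care is this last identification: one must track the coordinate relabeling so that the norms $\|x\|_j$, the Gegenbauer orders $k_{j+1}+\frac{n-j-2}{2}$, and the terminal factor $(x_{n-1}\pm ix_n)^{k_{n-2}}$ in the displayed formula for $h_\kappa^n$ all emerge correctly; this is routine but is where an off-by-one slip would hide. All the representation-theoretic content — the direct-sum decomposition, the equivariance of $H$, and the irreducibility of the $\H^l(\R^{n-1})$ — is already available from (\ref{decomposition-poly}), (\ref{decompositionharmonicspace}), and (\ref{onestepprojection}).
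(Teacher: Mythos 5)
Your proposal is correct and follows essentially the same route the paper takes (and merely sketches, citing Vilenkin): induction on $n$ via the $\OO(n-1)$-decomposition (\ref{decompositionharmonicspace}) together with the one-step projection formula (\ref{onestepprojection}). Your added details — injectivity of $h\mapsto H(x_1^{k-l}h(x'))$ by Schur's lemma and the careful index bookkeeping — are accurate fillings-in of what the paper leaves implicit.
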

\begin{remark}\label{remarkonbasis}
We note that the operator $H$ is written as a polynomial of $\Delta_n$.
It implies that the characterization of $h_\kappa^n(x)$ using 
the coordinates $x_j$ in Proposition \ref{harmonicbasis} naturally transforms 
under the action of $\OO(n)$ on ordered orthonormal bases of $\R^n$.
Therefore we may keep the same symbol $h_\kappa^n(x)$, $x\in\R^n$, to 
denote the bases of $\H^k(\R^n)$ regardless of
the particular choice of ordered orthonormal basis of $\R^n$.
This convention must be kept in mind when a choice of an
ordered orthonormal basis is strictly considered.
\end{remark}

\subsection{Theta lifts of Poincar\'e series}
Let $L$ be an even lattice of signature $(\bp,\bm)$ as before.
For a $\C[D_L]$-valued function $f(\tau)$ on $\mathbb{H}_1$,
we define an action of $(M,\phi)\in \Mp_2(\Z)$ by
$$
f\vert_{\nu,L}(M,\phi)(\tau)=\phi(\tau)^{-2\nu}\rho_L(M,\phi)^{-1}
f(M\tau), \quad \nu=k+\frac{\sig(L)}{2}.
$$
Suppose that $\nu\ge \frac32$.
For every $\beta\in D_L$, $0<m\in Q(\beta)+\Z$, and $\tau\in \mathbb{H}_1$,
we put $\mathfrak{e}_\beta(m\tau)=\be(m\tau)\mathfrak{e}_\beta$.
Then we define the $\C[D_L]$-valued 
Poincar\'e series $P_{m,\beta}(\tau)$ of weight 
$\nu$ and index $(m,\beta)$ by
\begin{equation}\label{poincareseries}
P_{m,\beta}(\tau)
=\frac{1}{2}\sum_{(M,\phi)\in 
\widetilde{\varGamma}_{\infty}\backslash\Mp_2(\Z)}
\mathfrak{e}_\beta(m\tau)\vert_{\nu,L}(M,\phi),
\end{equation}
where $\widetilde{\varGamma}_{\infty}=\langle T\rangle$.
It belongs to $S_\kappa(D_L)$ and satisfies
$$
(f(\tau),P_{m,\beta}(\tau))_\tau=\frac{2\Gamma(\nu-1)}{(4\pi m)^{\nu-1}}
c(m,\beta)
$$
for every cusp form $f(\tau)\in S_\nu(D_L)$ having the Fourier expansion
(\ref{ellipticfourier}) 
(see \cite[Theorem 1.4 and Proposition 1.5]{Br}).

Now we describe the theta liftings (\ref{thetalifting}) of $P_{m,\beta}(\tau)$.
\begin{theorem}\label{harmonicprojection}
Suppose that $p(x)\in \mathcal{P}^{k,0}(V)$. 
Then
\begin{equation}\label{latticepoincare}
 \vartheta_L(P_{m,\beta}; p)(g)=\frac{2\Gamma\bigl(k+\frac{\bp}{2}-1\bigr)}
{(2\pi)^{k+\frac{\bp}{2}-1}}
\sum_{\substack{\lambda\in L+\beta\\ Q(\lambda)=m}}
\frac{Hp\bigl(v_0^+(g^{-1}\lambda)\bigr)}{\|v_0^+(g^{-1}\lambda)\|^{2k+\bp-2}},
\end{equation}
where $Hp$ is the projection of $p$
defined in (\ref{hpoperator}).
In particular, one has
$$
\vartheta_L(P_{m,\beta}; p)(g)=\vartheta_L(P_{m,\beta}; Hp)(g).
$$
\end{theorem}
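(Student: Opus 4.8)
The plan is to evaluate the Petersson pairing $\vartheta_L(P_{m,\beta};p)(g)=\bigl(P_{m,\beta}(\tau),\Theta_L(\tau,g;p)\bigr)_\tau$ by the classical unfolding of the Poincar\'e series, reduce it to a one-dimensional Laplace integral, and then recognize in the resulting $\Gamma$-factors exactly the projector $H$ of (\ref{hpoperator}); the ``in particular'' clause will then drop out for free.

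\textbf{Step 1 (unfolding).} Write $P_{m,\beta}=\tfrac12\sum_{(M,\phi)\in\widetilde{\varGamma}_\infty\backslash\Mp_2(\Z)}\mathfrak{e}_\beta(m\tau)|_{\nu,L}(M,\phi)$. Since $\rho_L$ is unitary for $\langle\cdot,\cdot\rangle_L$ and, by Proposition~\ref{thetatransformation}, $\Theta_L(\cdot,g;p)$ has weight $2\nu=2k+\sig(L)$ and type $\rho_L$, a direct computation using $|\phi(\tau)|^2=|c\tau+d|$ and $\mathrm{Im}(M\tau)=y/|c\tau+d|^2$ gives
\[
\bigl\langle\mathfrak{e}_\beta(m\tau)|_{\nu,L}(M,\phi),\Theta_L(\tau,g;p)\bigr\rangle_L\,y^\nu=\bigl\langle\mathfrak{e}_\beta(mM\tau),\Theta_L(M\tau,g;p)\bigr\rangle_L\,(\mathrm{Im}(M\tau))^\nu .
\]
Because the measure $y^{-2}\,dx\,dy$ is $\SL_2(\R)$-invariant, summing over $(M,\phi)$ and integrating over $\SL_2(\Z)\backslash\mathbb{H}_1$ collapses to an integral over the strip, so
\[
\vartheta_L(P_{m,\beta};p)(g)=\int_0^\infty\!\!\int_0^1\be(m\tau)\,\overline{\theta_{L+\beta}(\tau,g;p)}\;y^{\nu}\,\frac{dx\,dy}{y^2},
\]
with overall constant normalized exactly as in the scalar identity $(f,P_{m,\beta})_\tau=\tfrac{2\Gamma(\nu-1)}{(4\pi m)^{\nu-1}}c(m,\beta)$ recalled above; this is the source of the factor $2$ in (\ref{latticepoincare}). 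For $3/2<\nu\le 2$ the Poincar\'e series converges only conditionally while $\Theta_L$ grows polynomially, so, as in \cite{Bo}, I would first insert a regularizing factor $y^{-s}$ and continue analytically -- a point I would mention but not dwell on.

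\textbf{Step 2 (the local integral and conclusion).} Since $p\in\mathcal{P}^{k,0}(V)$ depends only on the $V^+$-variables, $\exp(-\Delta_b/8\pi y)\,p=\sum_{j=0}^{[k/2]}\tfrac{(-1)^j}{j!(8\pi y)^j}\Delta_{\bp}^j p$ is a finite sum. Expanding $\overline{\theta_{L+\beta}}$ via (\ref{theta}) and using that $v_0\circ g^{-1}$ is an isometry, the $\tau$-exponential becomes $\be\bigl((Q(\lambda)-m)x\bigr)e^{-2\pi(m+Q_+-Q_-)y}$ with $Q_\pm=Q\bigl(v_0^\pm(g^{-1}\lambda)\bigr)$; the $x$-integral $\int_0^1\be\bigl((Q(\lambda)-m)x\bigr)\,dx$ annihilates every $\lambda$ with $Q(\lambda)\ne m$ (note $m-Q(\lambda)\in\Z$ because $L$ is even and $\beta\in L'$), and on the locus $Q(\lambda)=m$ one has $m+Q_+-Q_-=2Q_+=\|v_0^+(g^{-1}\lambda)\|^2$. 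Absolute convergence -- hence the exchange of $\sum_\lambda$ with the integrals -- follows from $\|v_0^+(g^{-1}\lambda)\|^2\ge 2m>0$ together with $\|v_0^+(g^{-1}\lambda)\|\to\infty$ along $\{\lambda\in L+\beta:Q(\lambda)=m\}$. With $\nu+\tfrac{\bm}{2}-2=k+\tfrac{\bp}{2}-2$, the $j$-th term contributes
\[
\frac{(-1)^j}{j!(8\pi)^j}\,\Delta_{\bp}^j p\bigl(v_0^+(g^{-1}\lambda)\bigr)\int_0^\infty y^{\,k+\frac{\bp}{2}-2-j}e^{-2\pi\|v_0^+(g^{-1}\lambda)\|^2y}\,dy=\frac{(-1)^j\,\Gamma\!\bigl(k+\tfrac{\bp}{2}-1-j\bigr)}{4^j\,j!\,(2\pi)^{k+\frac{\bp}{2}-1}\,\|v_0^+(g^{-1}\lambda)\|^{2k+\bp-2}}\,\|v_0^+(g^{-1}\lambda)\|^{2j}\Delta_{\bp}^j p\bigl(v_0^+(g^{-1}\lambda)\bigr),
\]
where I used $(2\pi)^j/(8\pi)^j=4^{-j}$, and the hypothesis $\nu>\tfrac32$ with $\bp\ge 2$ ensures $k+\tfrac{\bp}{2}-1-j>0$ for all relevant $j$. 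Comparing with (\ref{hpoperator}) for $n=\bp$, the $j$-sum is precisely $\Gamma\!\bigl(k+\tfrac{\bp}{2}-1\bigr)\,Hp\bigl(v_0^+(g^{-1}\lambda)\bigr)$ divided by $(2\pi)^{k+\bp/2-1}\|v_0^+(g^{-1}\lambda)\|^{2k+\bp-2}$, which together with the factor $2$ from Step~1 yields (\ref{latticepoincare}). Finally, since $Hp\in\H^k(\R^{\bp})\subset\mathcal{P}^{k,0}(V)$, formula (\ref{latticepoincare}) applies verbatim with $p$ replaced by $Hp$, and $H(Hp)=Hp$ leaves the right-hand side unchanged, giving $\vartheta_L(P_{m,\beta};p)=\vartheta_L(P_{m,\beta};Hp)$.

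\textbf{Main obstacle.} The only genuinely substantive points are (i) the low-weight convergence/regularization legitimizing the unfolding in Step~1, and (ii) the bookkeeping in Step~2 that turns the Laplace transforms of $\exp(-\Delta_{\bp}/8\pi y)p$ against the Gaussian into exactly the harmonic projector $H$ of (\ref{hpoperator}); everything else is formal.
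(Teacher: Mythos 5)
Your proposal is correct and follows essentially the same route as the paper: unfold the Poincar\'e series against $\Theta_L$, kill the non-$Q(\lambda)=m$ terms with the $x$-integral, and evaluate the resulting $y$-integral termwise on the finite expansion of $\exp(-\Delta_b/8\pi y)p$ so that the $\Gamma$-factors reassemble into the projector $H$ of (\ref{hpoperator}). Your extra remarks on low-weight regularization and on interchanging the sum with the integrals are sound points of care that the paper's proof passes over silently.
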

\begin{proof}
Definition (\ref{poincareseries}) and
Proposition \ref{thetatransformation} 
imply that $\vartheta_L(P_{m,\beta}; p)(g)$ is equal to
\begin{align*}
&\frac{1}{2}\int_{\SL_2(\Z)\backslash\mathbb{H}_1}
\sum_{(M,\phi)\in\tilde{\varGamma}_{\infty}\backslash\Mp_2(\Z)}
\bigl\langle e_\beta(mM\tau),\rho_L(M,\phi)\Theta_L(\tau,g;p)\bigr\rangle_L
 \phi(\tau)^{-2\nu}y^\nu\,\frac{dxdy}{y^2}\\
& =\int_{\SL_2(\Z)\backslash \mathbb{H}_1}
 \sum_{M\in \varGamma_\infty\backslash \SL_2(\Z)}
 \bigl\langle e_\beta(mM\tau),\Theta_L(M\tau,g;p)\bigr\rangle_L
 \mathrm{Im}(M\tau)^\nu\,\frac{dxdy}{y^2}
\end{align*}
with $\varGamma_\infty=\bigl\langle (\begin{smallmatrix}
 1 & 1\\
 0 & 1
\end{smallmatrix}\bigr)\bigr\rangle\subset \SL_2(\Z)$.
By the unfolding argument, we can compute it as
\begin{align*}
&2\int_0^\infty\int_0^1\mathbf{e}(m\tau)
\overline{\theta_{L+\beta}(\tau,g;p)}y^\nu\frac{dydx}{y^{2}}\\
=~&2\sum_{\substack{\lambda\in L+\beta\\ Q(\lambda)=m}}
\int_0^\infty
\left[\mathrm{exp}\left(-\frac{\Delta_{\bp+\bm}}{8\pi y}\right)p\right](v_0(g^{-1}\lambda))\mathrm{exp}\bigl(-4\pi Q(v_0^+(g^{-1}\lambda)) y\bigr)
y^{k+\frac{\bp}{2}-1}\frac{dy}{y}.
\end{align*}
Now putting the right-hand side of 
$$
\left[\mathrm{exp}\left(-\frac{\Delta_{\bp+\bm}}{8\pi y}\right)p\right]
(v_0(g^{-1}\lambda))
=\sum_{j=0}^{\left[\frac{k}{2}\right]}
\frac{(-1)^{j}[\Delta_\bp^jp](v_0^+(g^{-1}\lambda))}{j!(8\pi y)^{j}}
$$
into the final line above and carrying out the integrals over $y$, 
we then obtain (\ref{latticepoincare}).
Since $H(Hp)=Hp$, the identity (\ref{latticepoincare}) concludes 
the second assertion.
\end{proof}
Since the family of Poincar\'e series (\ref{poincareseries})
span the space $S_\kappa(D_L)$,
Theorem \ref{harmonicprojection} gives the following important identities:
\begin{corollary}\label{hpidentity} 
For every $p(x)\in\mathcal{P}^{k,0}(V)$ and $f(\tau)\in S_\nu(D_L)$, one has
$$
\vartheta_L(f; p)(g)=\vartheta_L(f; Hp)(g).
$$
\end{corollary}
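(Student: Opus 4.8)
The plan is to reduce the assertion for a general cusp form to the special case $f=P_{m,\beta}$ already settled in Theorem \ref{harmonicprojection}, exploiting that the theta lift is linear in its first argument and that the Poincar\'e series span $S_\nu(D_L)$.

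First I would fix $g\in\OO(L_\R)$ and regard $f\mapsto\vartheta_L(f;p)(g)=(f(\tau),\Theta_L(\tau,g;p))_\tau$ as a $\C$-linear functional on the finite-dimensional space $S_\nu(D_L)$; linearity is immediate from the definition (\ref{petersson}) of the Petersson pairing, which is linear in its first slot. Hence $f\mapsto\vartheta_L(f;p)(g)-\vartheta_L(f;Hp)(g)$ is a linear functional on $S_\nu(D_L)$, and it suffices to show that it annihilates a spanning family.

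Next I would invoke the spanning property of the Poincar\'e series: by the reproducing identity $(f,P_{m,\beta})_\tau=\frac{2\Gamma(\nu-1)}{(4\pi m)^{\nu-1}}c(m,\beta)$ recalled above, any $f\in S_\nu(D_L)$ orthogonal to all $P_{m,\beta}$ (with $\beta\in D_L$ and $0<m\in Q(\beta)+\Z$) has vanishing Fourier coefficients, so $f=0$; since $S_\nu(D_L)$ is finite-dimensional, the $P_{m,\beta}$ therefore span it.

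Finally, Theorem \ref{harmonicprojection} asserts exactly that $\vartheta_L(P_{m,\beta};p)(g)=\vartheta_L(P_{m,\beta};Hp)(g)$ for every such pair $(m,\beta)$ and every $g$. Thus the linear functional above vanishes on the spanning family $\{P_{m,\beta}\}$, hence identically on $S_\nu(D_L)$, which gives $\vartheta_L(f;p)(g)=\vartheta_L(f;Hp)(g)$ for every $f\in S_\nu(D_L)$ and every $g$, as claimed. The argument rests entirely on Theorem \ref{harmonicprojection}; the one point that warrants (routine) care is the spanning statement for the Poincar\'e series, which is a standard consequence of the reproducing formula and the finite-dimensionality of $S_\nu(D_L)$.
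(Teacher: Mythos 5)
Your proposal is correct and is exactly the paper's argument: the authors deduce the corollary from Theorem \ref{harmonicprojection} together with the fact that the Poincar\'e series span $S_\nu(D_L)$, using linearity of $f\mapsto(f,\Theta_L(\tau,g;p))_\tau$ in the first slot. You have merely spelled out the routine details (the spanning statement via the reproducing formula and positive definiteness of the Petersson product on the finite-dimensional space) that the paper leaves implicit.
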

\subsection{Theta lifts of vector-valued modular forms}
We have a natural equivalence $\mathcal{P}^{k,0}(V)\simeq 
\mathcal{P}^k(V^+)\boxtimes\mathbf{1}$ as  
$\OO(V^+)\times\OO(V^-)$-modules.
Then $\H^{k,0}(V)=\H^k(V^+)\boxtimes\mathbf{1}$ is identified 
with $\H^k(V^+)$ as $\OO(V^+)$-modules.
The fixed isometry $v_0$ determines
a maximal compact subgroup $K_{v_0}$ of $\OO(L_\R)$
that preserves the pullback of the decomposition $V=V^+\oplus V^-$
to $L_\R$.
Thus $K_{v_0}$ is isomorphic to $\OO(V^+)\times\OO(V^-)$ and 
acts on $H_k=\H^{k,0}(V)$. 
Denote by $\tau_k$ this $K_{v_0}$-representation on $H_k$ and 
let $\tau_k^\vee$ be its dual representation on 
$H_k^\vee=\hbox{Hom}_\C(H_k,\C)$.
For any basis $A=\{h\}$ of $H_k$, we take its dual basis $\{h^\vee\}$ of 
$H_k^\vee$.
Then we define a $H_k^\vee$-valued function on $\mathrm{O}(L_\R)$ by
\begin{equation}\label{liftgeneral}
\vartheta_L(f)(g)=\sum_{h\in A}\vartheta_L(f; h)(g)
h^\vee
\end{equation}
for every $f\in S_\nu(D_L)$,
which satisfies $\vartheta_L(f)(g)(h)=\vartheta_L(f; h)(g)$ 
for every $h\in H_k$.
Clearly it does not depend on any choice of $A$.
The following properties are direct consequences of the left 
$\varGamma_L$-invariance of theta series and the definitions 
(\ref{thetalifting}) and (\ref{liftgeneral}):
\begin{lemma}\label{automorphicproperty}
For every $h\in H_k$,
$\gamma\in \varGamma_L$, and $w\in K_{v_0}$ one has
$$
\vartheta_L(f; h)(\gamma g w)=\vartheta_L(f; \tau_k(w)h)(g)\hbox{ and }
\vartheta_L(f)(\gamma g w^{-1})=\tau_k^\vee(w)\vartheta_L(f)(g).
$$
\end{lemma}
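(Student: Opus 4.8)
The statement to prove is Lemma~\ref{automorphicproperty}, which records two equivariance properties of the theta lifting under the left action of $\varGamma_L$ and the right action of $K_{v_0}$.

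\medskip

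The plan is to trace the $\varGamma_L$- and $K_{v_0}$-actions through the defining integral (\ref{thetalifting}), reducing everything to corresponding properties of the theta kernel (\ref{weiltheta}). First I would record the basic transformation behavior of $\Theta_L(\tau,g;p)$ in the variables $g$ and $p$. For $\gamma\in\varGamma_L$, since $\gamma$ acts trivially on $D_L$ and preserves each coset $L+\delta$, replacing $\lambda$ by $\gamma\lambda$ in (\ref{theta}) is a bijection of $L+\delta$; substituting $g\mapsto\gamma g$ amounts to replacing $g^{-1}$ by $g^{-1}\gamma^{-1}$, so after reindexing $\lambda\mapsto\gamma\lambda$ each summand is unchanged, and hence $\Theta_L(\tau,\gamma g;p)=\Theta_L(\tau,g;p)$. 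For $w\in K_{v_0}$, the isometry $v_0$ intertwines $w$ with an element of $\OO(V^+)\times\OO(V^-)$, so $v_0^\pm(w^{-1}x)=w\cdot v_0^\pm(x)$ componentwise; since the quadratic forms $Q$ on $V^\pm$, the Euclidean Laplacian $\Delta_b$, and the operator $\exp(-\Delta_b/8\pi y)$ are all $\OO(V^+)\times\OO(V^-)$-invariant, substituting $g\mapsto gw$ gives
\begin{equation*}
\Theta_L(\tau,gw;p)=\Theta_L\bigl(\tau,g;w\cdot p\bigr),
\end{equation*}
where $w\cdot p(x)=p(w^{-1}x)$ is exactly the $\tau_k$-action (after identifying $\mathcal{P}^{k,0}(V)\simeq\mathcal{P}^k(V^+)$ and using Corollary~\ref{hpidentity} to pass between $p$ and $Hp$ if one wishes to stay inside $H_k$).

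\medskip

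Next I would feed these into (\ref{thetalifting}). Since the Petersson pairing $(f(\tau),\cdot)_\tau$ in (\ref{petersson}) only touches the $\tau$-variable, it commutes with all of the above substitutions. Thus
\begin{equation*}
\vartheta_L(f;h)(\gamma g w)=\bigl(f(\tau),\Theta_L(\tau,\gamma g w;h)\bigr)_\tau
=\bigl(f(\tau),\Theta_L(\tau,g;\tau_k(w)h)\bigr)_\tau
=\vartheta_L(f;\tau_k(w)h)(g),
\end{equation*}
which is the first identity. For the second, apply (\ref{liftgeneral}): with $A=\{h\}$ a basis of $H_k$ and $\{h^\vee\}$ the dual basis,
\begin{equation*}
\vartheta_L(f)(\gamma g w^{-1})=\sum_{h\in A}\vartheta_L(f;\tau_k(w^{-1})h)(g)\,h^\vee.
\end{equation*}
Reindexing the sum by $h'=\tau_k(w^{-1})h$ and using that the dual basis transforms contragrediently, $h^\vee=\tau_k^\vee(w)(h')^\vee$ (equivalently, that $\sum_h (\tau_k(w^{-1})h)\otimes h^\vee=\sum_{h'}h'\otimes\tau_k^\vee(w)(h')^\vee$ as an element of $H_k\otimes H_k^\vee$ independent of basis), one gets $\vartheta_L(f)(\gamma g w^{-1})=\tau_k^\vee(w)\vartheta_L(f)(g)$, as claimed. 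The independence of the choice of $A$ was already noted after (\ref{liftgeneral}) and makes the reindexing legitimate.

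\medskip

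There is no serious obstacle here; the only point that requires a little care is making sure the action of $K_{v_0}$ on $p\in\mathcal{P}^{k,0}(V)$ induced by the substitution $g\mapsto gw$ really is the representation $\tau_k$ on $H_k$ and not merely the $\mathcal{P}^{k,0}(V)$-action — this is where one invokes that $H$ commutes with the $\OO(V^+)$-action (it is a polynomial in the invariant Laplacian $\Delta_\bp$, cf.\ Remark~\ref{remarkonbasis}) together with Corollary~\ref{hpidentity}, so that $\vartheta_L(f;\cdot)$ factors through $H_k$ and the two uses of $K_{v_0}$-equivariance are compatible. Everything else is bookkeeping with bijections of lattice cosets and invariance of the Gaussian-type kernel.
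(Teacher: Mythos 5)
Your proof is correct and follows exactly the route the paper intends: the paper offers no written proof, simply declaring the lemma a direct consequence of the left $\varGamma_L$-invariance of the theta series and the definitions (\ref{thetalifting}) and (\ref{liftgeneral}), and your argument is precisely the straightforward verification of those two facts (reindexing the lattice sum for $\gamma$, $\OO(V^+)\times\OO(V^-)$-equivariance of the kernel in the polynomial variable for $w$, plus the standard contragredient bookkeeping for the dual basis). The only superfluous step is invoking Corollary \ref{hpidentity}: for $h\in H_k$ the polynomial is already harmonic and $H_k$ is $\tau_k$-stable, so no passage between $p$ and $Hp$ is needed.
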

\section{Fourier expansions of theta liftings}
Let $L$ be an even lattice of signature $(\bp,\bm)$.
In this section, we will compute the Fourier expansions of the lifting
$\vartheta_L(f)(g)$ defined in (\ref{liftgeneral}).
Borcherds \cite{Bo} established a general method to compute the expansions of 
theta liftings by decomposing $\Theta_L$ into a combination of 
generalized theta series associated to a smaller lattice.
Our idea is to combine his arguments with our observation
obtained in Corollary \ref{hpidentity}.
\subsection{A smaller lattice and a maximal $\Q$-parabolic subgroup}
\label{smallerlattice}
Firstly let us recall the arguments in \cite{Bo} and \cite{Br}.
Take and fix any primitive isotropic vector $z\in L$
and an element $z'\in L'$ with $(z,z')=1$.
Then we can take the lattice
$$
L_1=L\cap \langle z\rangle^\perp\cap\langle z'\rangle^\perp
$$
and its dual $L_1'\subset (L_1)_\Q$. 
Over $\Q$ we have the orthogonal decomposition 
\begin{equation}\label{rationalorthogonal}
L_\Q=(L_1)_\Q\oplus\langle z, z'\rangle_\Q,
\end{equation}
and hence $L_1$ is an even lattice of signature $(\bp-1,\bm-1)$.
For each $x\in L_\R$ let $x_{L_1}$ be the orthogonal projection 
of $x$ to the subspace $(L_1)_\R$, which is explicitly given by
$$
x_{L_1}=x-(x,z)z'-\bigl((x,z')-(x,z)(z',z')\bigr)z.
$$

Denote by $P_z$ be the maximal $\Q$-parabolic subgroup of $\OO(L_\R)$ 
stabilizing the isotropic line $\langle z\rangle$.
It admits the Levi decomposition $P_z=M_zN_z$ with the abelian unipotent
radical $N_z$ and the Levi subgroup $M_z= P_z\cap P_{z'-q(z')z}$, where
$P_{z'-q(z')z}$ is the maximal $\Q$-parabolic subgroup stabilizing 
the isotropic line $\langle z'-q(z')z\rangle$.
We have group isomorphisms $n_z: (L_1)_\R\simeq N_z$ and 
$m_z: \GL_1(\R)\times \OO((L_1)_\R)\simeq  M_z$,
which are characterized by the following identities:
\begin{align}
&n_z(u)z=z,\quad n_z(u)x=-(u,x)z+x,\quad
n_z(u)z'=-q(u)z+u+z', \label{bijectionlevi1}\\
&m_z(a,g_1)z=az,\quad m_z(a,g_1)x=g_1x,\quad
m_z(a,g_1)z'=a^{-1}z'+(a-a^{-1})q(z')z \label{bijectionlevi2}
\end{align}
for every $u, x\in (L_1)_\R$, $a\in \R^\times$, and $g_1\in \OO((L_1)_\R)$.
We note that $P_z$ stabilizes the sequence $\Q z\subset 
(L_1)_\Q\oplus\Q z\subset L_\Q$.

Recall the fixed isometry $v_0: (L_\R, q)\simeq (V, Q)$.
Denote by $L_\R^\pm$ the inverse images of $V^\pm$ under $v_0$.
Once again, we write every $x\in L_\R$ as $x=x^++x^-$ according to 
the orthogonal decomposition 
$L_\R=L_\R^+\oplus L_\R^-$.
For the fixed $z\in L$ we put 
\begin{equation}\label{realhyperbolic}
z^\ast=\frac{z^+-z^-}{2\Vert z^+\Vert^2}\in L_\R,
\end{equation}
then $q(z^\ast)=0$ and $(z,z^\ast)=1$.
Hence $\langle z,z^\ast\rangle=\langle z^+\rangle
\oplus\langle z^-\rangle$ is a hyperbolic plane in $L_\R$, and
provides the other orthogonal decomposition of $L_\R$
than (\ref{rationalorthogonal}):
\begin{equation}\label{grassmannorthogonal}
L_\R=W_\R \oplus\langle z,z^\ast\rangle,\quad
W_\R=\langle z^+\rangle^\perp\cap\langle z^-\rangle^\perp.
\end{equation}
We denote by $x_W$ the orthogonal projection
of $x\in L_\R$ to $W_\R$. 
Of course the hyperbolic planes $\langle z,z'\rangle$ and
$\langle z,z^\ast\rangle$ do not coincide in general, but
the map $x\mapsto x_W$ induces an isometric isomorphism
$(L_1)_\R\simeq W_\R$ by the following reasons:
if $x\in\langle z\rangle^\perp= W_\R\oplus \langle z\rangle$,
then we have  $x_W=x-(x,z^\ast)z$ and $\Vert x\Vert=\Vert x_W\Vert$,
and by definition we have $(L_1)_\R\subset \langle z\rangle^\perp$ and 
$(L_1)_\R\cap\langle z\rangle=\{0\}$.

We also define a linear map $v_1$ from $L_\R$ into $V$ as 
\begin{equation}\label{v1}
v_1(x)=v_0(x_W), \quad x\in L_\R.
\end{equation} 
In particular, $v_1$ vanishes on 
$\langle z^+\rangle\oplus\langle z^-\rangle
=\langle z,z^\ast\rangle$, 
and its restriction to $(L_1)_\R$
gives an isometry of $(L_1)_\R$ onto its image 
$v_1((L_1)_\R)=v_0(W_\R)=v_1(L_\R)$.

\begin{lemma}\label{parabolicreduction}
Let $g_z=n_z(u)m_z(a,g_1)\in P_z$.
Then one has that $v_1(g_z^{-1}x)=v_1(g_1^{-1}x_{L_1})$ for 
every $x\in (L_1)_\R\oplus\langle z\rangle$, and 
also that $(g_z^{-1}z^*)_{L_1}=g_1^{-1}(z^*)_{L_1}-u$.
\end{lemma}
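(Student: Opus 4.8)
The plan is to verify the two claimed identities by direct computation, using the explicit formulas for $n_z$, $m_z$, the projections $x_{L_1}$ and $x_W$, and the definitions of $v_0$, $v_1$, $z^*$. The main point to keep in mind is that $v_1$ kills the hyperbolic plane $\langle z,z^*\rangle$, so in computing $v_1(g_z^{-1}x)$ we only care about $g_z^{-1}x$ modulo $\langle z,z^*\rangle$, equivalently about its $W_\R$-component, and we may freely add multiples of $z$ and $z^*$.

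First I would compute $g_z^{-1}$. Since $P_z=M_zN_z$ with $N_z$ abelian, $g_z^{-1}=m_z(a,g_1)^{-1}n_z(u)^{-1}=m_z(a^{-1},g_1^{-1})n_z(-u)$; alternatively one writes $g_z^{-1}=n_z(u')m_z(a^{-1},g_1^{-1})$ for a suitable $u'$, but the first form is more convenient. For $x\in(L_1)_\R$ we have $n_z(-u)x=(u,x)z+x$ by (\ref{bijectionlevi1}), and then $m_z(a^{-1},g_1^{-1})$ sends $z\mapsto a^{-1}z$ and $x\mapsto g_1^{-1}x$ by (\ref{bijectionlevi2}); hence $g_z^{-1}x=a^{-1}(u,x)z+g_1^{-1}x$, so $g_z^{-1}x$ agrees with $g_1^{-1}x$ modulo $\langle z\rangle$. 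For $x=z$ itself, $g_z^{-1}z=a^{-1}z$, which lies in $\langle z,z^*\rangle$. Thus for every $x\in(L_1)_\R\oplus\langle z\rangle$ we get $g_z^{-1}x\equiv g_1^{-1}x_{L_1}\pmod{\langle z\rangle}$, where $x_{L_1}$ is the $(L_1)_\R$-component of $x$ in the decomposition (\ref{rationalorthogonal}) and $g_1^{-1}$ fixes $\langle z\rangle$ pointwise on the Levi. Now I apply $v_1$: since $v_1$ vanishes on $\langle z\rangle\subset\langle z,z^*\rangle$ and is linear, $v_1(g_z^{-1}x)=v_1(g_1^{-1}x_{L_1})$, which is the first identity. (One must note that $g_1^{-1}x_{L_1}\in(L_1)_\R$ so $v_1$ restricted there is the honest isometry onto $v_0(W_\R)$, as recorded just before the lemma.)

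For the second identity I would compute $g_z^{-1}z^*$ and then project to $(L_1)_\R$ along (\ref{rationalorthogonal}). Write $z^*$ in terms of the $\langle z,z'\rangle$-decomposition: since $(z,z^*)=1$ and $q(z^*)=0$, we have $z^*=(z^*)_{L_1}+z'+c\,z$ for some scalar $c$ (the $z'$-coefficient is $(z^*,z)=1$; the $z$-coefficient $c=(z^*,z')-q(z')$, but its precise value will not matter because $g_z^{-1}z$ is a multiple of $z$ and projects to $0$ in $(L_1)_\R$). Apply $g_z^{-1}=m_z(a^{-1},g_1^{-1})n_z(-u)$: by (\ref{bijectionlevi1}), $n_z(-u)$ sends $z'\mapsto -q(u)z-u+z'$, fixes $z$, and sends $w\mapsto (u,w)z+w$ for $w\in(L_1)_\R$; then $m_z(a^{-1},g_1^{-1})$ sends $z\mapsto a^{-1}z$, $z'\mapsto a z' + (a^{-1}-a)q(z')z$, and $w\mapsto g_1^{-1}w$. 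Collecting terms, the $(L_1)_\R$-components combine as $g_1^{-1}((z^*)_{L_1}) - g_1^{-1}u$, since the $z'$-component of $z^*$ is exactly $1$ and produces the term $g_1^{-1}(-u)=-g_1^{-1}u$ from $n_z(-u)z'$; note $-g_1^{-1}u$ is not yet $-u$. Here is the subtlety I expect to be the main obstacle: the statement claims $(g_z^{-1}z^*)_{L_1}=g_1^{-1}(z^*)_{L_1}-u$, with a bare $-u$, so I must check that the $z'$-contribution really yields $-u$ rather than $-g_1^{-1}u$ — this forces a careful reading of how $m_z$ and $n_z$ are ordered in the product and which of the two Levi-factor conventions the paper uses. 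Once the bookkeeping of that one term is pinned down (I anticipate writing $g_z^{-1}=n_z(u'')m_z(a^{-1},g_1^{-1})$ with $u''$ chosen so that the unipotent part acts \emph{after} $g_1^{-1}$, producing exactly $-u$), the remaining terms are all multiples of $z$, which drop out under $(\cdot)_{L_1}$, and the identity follows. I would close by remarking that these two identities are precisely what is needed to feed Borcherds' decomposition of $\Theta_L$ into theta series for $L_1$ in the next subsection.
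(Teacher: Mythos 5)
Your treatment of the first identity is correct and is essentially the paper's own argument: from (\ref{bijectionlevi1}) and (\ref{bijectionlevi2}) one gets $g_z^{-1}x=a^{-1}(u,x)z+g_1^{-1}x$ for $x\in(L_1)_\R$ and $g_z^{-1}z=a^{-1}z$, and $v_1$ kills $\langle z\rangle$.

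For the second identity your computation is also correct, but the escape route you sketch at the end does not exist. You correctly find $(g_z^{-1}z^*)_{L_1}=g_1^{-1}(z^*)_{L_1}-g_1^{-1}u$, and then hope that rewriting $g_z^{-1}$ in the form $n_z(u'')m_z(a^{-1},g_1^{-1})$ will turn $-g_1^{-1}u$ into $-u$. It cannot: $g_z^{-1}$ is one fixed element of $\OO(L_\R)$, so $(g_z^{-1}z^*)_{L_1}$ does not depend on how you factor it; carrying out the refactoring (one finds $u''=-a^{-1}g_1^{-1}u$, using $m_z(a,g_1)n_z(v)m_z(a,g_1)^{-1}=n_z(ag_1v)$) reproduces $-g_1^{-1}u$ on the nose. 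The honest conclusion of your computation is that the second assertion should read $(g_z^{-1}z^*)_{L_1}=g_1^{-1}\bigl((z^*)_{L_1}-u\bigr)$; the printed right-hand side agrees with this only when $g_1u=u$. The corrected form is also the one consistent with the rest of the paper: what is actually needed later is $\mu(g_z)_{L_1}=\bigl(-z'+ag_zz^*\bigr)_{L_1}=ag_1(z^*)_{L_1}+u$, as used in the proof of Theorem \ref{fourierexpansion>2}, and this follows from the companion computation $(g_zz^*)_{L_1}=g_1(z^*)_{L_1}+a^{-1}u$, i.e.\ from the corrected identity applied to $g_z^{-1}=n_z(-a^{-1}g_1^{-1}u)\,m_z(a^{-1},g_1^{-1})$. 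So either prove the corrected statement (your computation already does) or flag the misprint; do not spend effort trying to manufacture the bare $-u$, because that version fails whenever $g_1^{-1}u\ne u$.
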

\begin{proof}
Since $v_1$ vanishes on $\langle z\rangle$, the first
assertion follows from (\ref{bijectionlevi1}) and (\ref{bijectionlevi2}). 
The condition that $(z,z^*)=1$ implies that $z^*=cz+z'+(z^*)_{L_1}$ with $c\in \R$. 
Then (\ref{bijectionlevi1}) and (\ref{bijectionlevi2}) conclude 
the second assertion.
\end{proof}

\subsection{Reduction of theta series to smaller lattices}
In this section we suppose that $\bp>2$. 
Keep $z\in L$, $z'\in L'$, and $P_z$ as defined earlier.
As to the map $v_1$ in (\ref{v1}), we write $v_1(x)=v_1^+(x)+v_1^-(x)$ 
for every $x\in L_\R$ according to the decomposition $V=V^+\oplus V^-$.
Take $0\neq  v_0^+(z)/\Vert v_0^+(z)\Vert\in V^+$, and
extend it to an ordered orthonormal basis of $V^+$ by taking
appropriate elements of $v_1^+(L_\R)$.
This new basis differs from the original one $\{e_1,\ldots, e_\bp\}$ 
only by the $\OO(V^+)$-action, and we treat each $h_\kappa^\bp(x)\in 
\H^{k,0}(V)$ 
using this new basis after Remark \ref{vilenkinharmonic}.

With reference to (\ref{decompositionharmonicspace}) and Proposition 
\ref{harmonicbasis}, we define polynomials
\begin{equation}\label{testpolynomial}
p_\kappa(v_0(x))=\left(v_0(x), 
\frac{v_0^+(z)}{\Vert v_0^+(z)\Vert}\right)^{k_0-k_1}
h_{\kappa^{(1)}}^{\bp-1}(v_1^+(x))
\end{equation}
for $x\in L_\R$, $\kappa=(k_0,k_1,\ldots, \pm k_{\bp-2})$ with
$k=k_0\ge k_1\ge\cdots\ge k_{\bp-2}\ge 0$,
where we set $\kappa^{(1)}=(k_1,\ldots, \pm k_{\bp-2})$ in the right-hand side.
By definition we see that $p_\kappa(v_0(x))=p_\kappa(v_0^+(x))$, and hence
$p_\kappa\in \mathcal{P}^{k,0}(V)$.
By (\ref{onestepprojection}),
we note that 
\begin{equation}\label{auxiliarypolynomial}
Hp_\kappa(v_0(x))=\frac{(k_0-k_1)!}
{2^{k_0-k_1}} h_\kappa^\bp(v_0(x)).
\end{equation}
Thus $\{Hp_\kappa\}_\kappa$ gives a basis of $\H^{k,0}(V)$.
Combined with Corollary \ref{hpidentity},
the polynomials $p_\kappa$ play an auxiliary
role in our computation of 
the Fourier expansion of $\vartheta_L(f; h_\kappa^\bp)(g)$.

Let $N$ be the positive integer uniquely determined by
$(L,z)=N\mathbb{Z}$ and $\zeta\in L$ be a vector satisfying $(\zeta,z)=N$. 
Then $L$ can be written as 
\begin{equation}\label{latticedecomposition}
L=L_1\oplus \Z\zeta\oplus \Z z
\end{equation}
as an additive group (see \cite[Proposition 2.2]{Br}).

\begin{lemma}\label{basicdecomposition} 
Suppose $\bp>2$.
Let $z\in L$ be a primitive isotropic vector, and
let $p_\kappa$ be as in (\ref{testpolynomial}).
For every $\gamma\in D_L$ and $g_z=n_z(u)m_z(a,g_1)\in  P_z$, one has
\begin{align*}
\theta_{L+\gamma}(\tau,g_z;p_\kappa)&
=\frac{y^{\frac{\bm-1}{2}}a^{k_0-k_1}|a|}
{\sqrt{2}\Vert v_0^+(z)\Vert^{k_0-k_1+1}}
\sum_{\lambda\in L_1\oplus\Z\zeta+\gamma}
\sum_{n\in \Z}\left(\frac{(\lambda,z)\overline{\tau}+n}{-2iy}\right)^{k_0-k_1}
h_{\kappa^{(1)}}^{\bp-1}(v_1^+(g_z^{-1}\lambda))  \\
\times&\mathbf{e}\left(\tau Q(v_1^+(g_z^{-1}\lambda))+
\overline{\tau}Q(v_1^-(g_z^{-1}\lambda))
-n(\lambda, ag_zz^\ast)
-\frac{a^2|(\lambda,z)\tau+n|^2}{4iy\Vert v_0^+(z)\Vert^2}
\right).
\end{align*}
\end{lemma}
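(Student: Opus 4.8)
The plan is to expand the theta series $\theta_{L+\gamma}(\tau,g_z;p_\kappa)$ directly from definition \eqref{theta}, substituting the explicit form \eqref{testpolynomial} of $p_\kappa$, and then split the summation over $L+\gamma$ using the additive decomposition \eqref{latticedecomposition} as $\lambda = \mu + nz$ with $\mu\in L_1\oplus\Z\zeta+\gamma$ and $n\in\Z$. First I would observe that since $p_\kappa(v_0(x))=p_\kappa(v_0^+(x))$ factors as the $(k_0-k_1)$-th power of a linear form in the direction $v_0^+(z)/\Vert v_0^+(z)\Vert$ times $h_{\kappa^{(1)}}^{\bp-1}(v_1^+(x))$, the heat operator $\exp(-\Delta_b/8\pi y)$ acts only through the variables appearing in $p_\kappa$; since $h_{\kappa^{(1)}}^{\bp-1}$ is harmonic on $V_1^+$ and the linear factor is annihilated after two derivatives except for the cross term, one must carefully track how $\exp(-\Delta_b/8\pi y)$ converts the polynomial $\ell^{k_0-k_1}$ (with $\ell$ the linear form) into a Hermite-type polynomial. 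This is where the factor $\left(\frac{(\lambda,z)\overline\tau+n}{-2iy}\right)^{k_0-k_1}$ will arise.

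The key computational steps, in order, are: (1) using Lemma \ref{parabolicreduction} to rewrite $v_1^+(g_z^{-1}\lambda)$ in terms of $v_1^+(g_1^{-1}\mu_{L_1})$ and $v_1^-$ similarly, noting these depend only on $\mu$ and not on the $nz$ component since $v_1$ kills $\langle z\rangle$; (2) computing the quadratic exponent $\tau Q(v_0^+(g_z^{-1}\lambda))+\overline\tau Q(v_0^-(g_z^{-1}\lambda))$ by decomposing $v_0(g_z^{-1}\lambda)$ along $v_0^\pm(z)$ and $v_1^\pm$, using $z^\ast$ from \eqref{realhyperbolic}, and extracting the piece involving $(\lambda,g_z^{-1}z)$-type pairings; here the identity $(g_z^{-1}z^\ast)_{L_1}=g_1^{-1}(z^\ast)_{L_1}-u$ from Lemma \ref{parabolicreduction} produces the term $n(\lambda, ag_z z^\ast)$; (3) evaluating the action of $\exp(-\Delta_b/8\pi y)$ on the linear-form power, producing the rational function in $(\lambda,z)\tau+n$ over $y$, together with the Gaussian factor $\exp\left(-\frac{a^2|(\lambda,z)\tau+n|^2}{4iy\Vert v_0^+(z)\Vert^2}\right)$ that records the contribution of the $\langle z,z^\ast\rangle$-plane to the norm; (4) collecting the power-of-$y$ bookkeeping: the original $y^{\bm/2}$ combines with a $y^{-1/2}$ from evaluating the one-dimensional Gaussian integral/heat-kernel factor in the $z$-direction to yield $y^{(\bm-1)/2}$, and the scalars $a$, $\Vert v_0^+(z)\Vert$ redistribute as in the asserted formula.

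The main obstacle I expect is step (3): correctly handling the interaction of the heat operator $\exp(-\Delta_b/8\pi y)$ with the inhomogeneous coordinate change, since $v_0^+(g_z^{-1}\lambda)$ mixes the $v_0^+(z)/\Vert v_0^+(z)\Vert$-direction with the $v_1^+$-directions in a way that depends on $u$ and $a$, so that $\Delta_b$ does not act diagonally on the factored polynomial. The cleanest route is to choose the adapted orthonormal basis of $V^+$ described just before \eqref{testpolynomial} (first vector $v_0^+(z)/\Vert v_0^+(z)\Vert$, remaining vectors in $v_1^+(L_\R)$), so that $p_\kappa$ becomes $x_1^{k_0-k_1}$ times a harmonic polynomial in $x_2,\ldots,x_\bp$; then $\Delta_\bp$ decomposes as $\partial_1^2 + \Delta_{\bp-1}$, the second piece annihilates $h_{\kappa^{(1)}}^{\bp-1}$, and $\exp(-\partial_1^2/8\pi y)$ acting on $x_1^{k_0-k_1}$ gives an explicit Hermite polynomial. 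Substituting $x_1 = (v_0(g_z^{-1}\lambda), v_0^+(z)/\Vert v_0^+(z)\Vert)$ and using \eqref{bijectionlevi1}–\eqref{bijectionlevi2} to express this pairing as $\frac{a}{\Vert v_0^+(z)\Vert}$ times a real-linear combination of $(\lambda,z)$ and the $n$-shift then completes the identification with the stated right-hand side. The remaining steps are routine substitutions and exponent arithmetic.
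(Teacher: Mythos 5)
Your proposal is missing the one essential idea of this lemma: Poisson summation in the $\Z z$-direction. The identity is \emph{not} a term-by-term rewriting of the defining sum. If you split $L+\gamma$ as $\mu+nz$ and manipulate each summand directly, the $n$-dependence of the exponential factor is
$\be\bigl(\tau Q(v_0^+(g_z^{-1}(\mu+nz)))+\overline\tau Q(v_0^-(\cdots))\bigr)$, whose $n^2$-coefficient is $(\tau-\overline\tau)a^{-2}\Vert v_0^+(z)\Vert^2/2 = iy\,a^{-2}\Vert v_0^+(z)\Vert^2$; i.e.\ you get a Gaussian in $n$ with $y$ in the \emph{numerator} of the exponent, multiplied by a Hermite-type polynomial in $n$ coming from $\exp(-\Delta_b/8\pi y)$. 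The right-hand side of the lemma instead has the Gaussian $\be\bigl(-a^2|(\lambda,z)\tau+n|^2/(4iy\Vert v_0^+(z)\Vert^2)\bigr)$ with $1/y$ in the exponent, a clean power of the single linear form $((\lambda,z)\overline\tau+n)/(-2iy)$, the extra linear phase $-n(\lambda,ag_zz^\ast)$, and the prefactor $|a|\,y^{-1/2}/(\sqrt2\,\Vert v_0^+(z)\Vert)$. All four of these are fingerprints of the Fourier transform of a (polynomial)$\times$(Gaussian): the $n$ appearing on the right is the variable \emph{dual} to the $n$ indexing the shifts $nz$ on the left, and the prefactor is exactly $A^{-1/2}$ for $A=2ya^{-2}\Vert v_0^+(z)\Vert^2$ from $\sum_n e^{-\pi An^2+\cdots}=A^{-1/2}\sum_{n'}e^{-\pi n'^2/A+\cdots}$. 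The two sides agree only as sums over $n\in\Z$, by Poisson summation, not summand by summand.

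The paper's proof is precisely this: write $\theta_{L+\gamma}=\sum_\mu\sum_n f(\mu,v;n)=\sum_\mu\sum_n\widehat f(\mu,v;n)$ with $\widehat f$ the partial Fourier transform in $n$, and then compute $\widehat f$ following Borcherds' Lemma 5.1, using $g_z^{-1}z=a^{-1}z$, $v_1(z)=0$, and the harmonicity of $h_{\kappa^{(1)}}^{\bp-1}$ so that the heat operator acts only through the $v_0^+(z)$-direction. Your step (4) gestures at a ``one-dimensional Gaussian integral'' producing the $y^{-1/2}$, but in a direct expansion no such integral exists ($\exp(-\Delta_b/8\pi y)$ is a terminating differential operator on polynomials and yields only integer powers of $y^{-1}$); the Gaussian integral is the Fourier transform inside Poisson summation. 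Your basis adaptation in the last paragraph (putting $p_\kappa=x_1^{k_0-k_1}h_{\kappa^{(1)}}^{\bp-1}$ so that $\Delta_\bp=\partial_1^2+\Delta_{\bp-1}$ and the second piece dies) is correct and is indeed used, but it only prepares the summand for the Fourier transform; it cannot by itself produce the stated formula.
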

\begin{proof}
Let us write $v^{\pm}_{0}(g_z^{-1}u)$ simply as $v^\pm(u)$ and also set 
\begin{align*}
f(\lambda,v; n)=y^{\frac{\bm}{2}}\left[{\rm exp}
\left(-\frac{\Delta}{8\pi y}\right)p_\kappa\right]
(v^{+}(\lambda+nz)) \mathbf{e}\left(
\tau Q(v^{+}(\lambda+nz))+\overline{\tau}Q(v^{-}(\lambda+nz))\right).
\end{align*}
The Poisson summation formula
and (\ref{latticedecomposition}) imply that
$\theta_{L+\gamma}(\tau,g;p_\kappa)$ is equal to
$$
\sum_{\lambda\in L_1\oplus \Z \zeta+\gamma}
\sum_{n\in \Z}f(\lambda,v; n)=
\sum_{\lambda\in L_1\oplus \Z \zeta+\gamma}
\sum_{n\in \Z}\widehat{f}(\lambda,v; n),
$$
where $\widehat{f}(\lambda,v; n)$ is the partial Fourier transform 
of $f(\lambda,v; n)$ in the variable $n$.
We refer to \cite[p.509]{Bo} for
the explicit description of the factor $[\mathrm{exp}(-\Delta/8\pi y)p_\kappa]
(v^+(\lambda+nz))$ occurring in $f(\lambda,v; n)$.
Then we follow the computation of 
$\widehat{f}(\lambda, v;n)$ given in the proof of \cite[Lemma 5.1]{Bo}
with noting that $g_z^{-1}z=a^{-1}z$, and hence
$h^{\bp-1}_{\kappa^{(1)}}(v_1^+(g_z^{-1}(\lambda+nz)))=
h^{\bp-1}_{\kappa^{(1)}}(v_1^+(g_z^{-1}(\lambda)))$.
Since $\Delta h_{\kappa^{(1)}}^{\bp-1}=0$, 
we conclude the required equality.
\end{proof}
Take a sublattice
$M'=\{\lambda\in L'\mid (\lambda,z)\equiv 0\ \mathrm{mod}\ N\}$ of $L'$. 
Then there is a projection map $\pi: M'\to L_1'$ defined  by
\begin{equation}\label{prM'}
\pi(\lambda)=\lambda_{L_1}-\frac{(\lambda,z)}{N}\zeta_{L_1},\quad 
\lambda\in M'
\end{equation}
(see \cite[p.41, (2.7)]{Br}).
It satisfies that $\pi(L)=L_1$, and thus
induces a surjective map $M'/L\to L_1'/L_1=D_{L_1}$, 
which we also denote by $\pi$.

For every $g_z=n_z(u)m_z(a,g_1)\in  P_z$, we put
\begin{equation}\label{mu}
\mu(g_z)=-z'+ag_zz^\ast=-z'+z^\ast +u-q(u)z.
\end{equation}
Then $\mu(g_z)\in \langle z\rangle^\perp=W_\R\oplus\langle z\rangle
=(L_1)_\R\oplus\langle z\rangle$.
Now we recall the reduction formula of $\theta_{L+\gamma}$
(see \cite[Theorem 5.2]{Bo} and \cite[Theorem 2.4]{Br}):
\begin{proposition}\label{thetareduction}
Suppose $\bp>2$.
Let $\gamma\in L'$ and $g_z=n_z(u)m_z(a,g_1)\in  P_z$. 
For every $p_\kappa\in \mathcal{P}^{k,0}(V)$ defined in (\ref{testpolynomial}),
$\theta_{L+\gamma}(\tau, g_z; p_\kappa)$ is equal to
\begin{align*}
&\frac{a^{k_0-k_1}|a|}{\sqrt{2}\Vert v_0^+(z)\Vert^{k_0-k_1+1}}
\sum_{\substack{c,d\in \Z\\ c\equiv (\gamma,z)~\mathrm{mod}~N}}
\left(\frac{c\overline{\tau}+d}{-2iy}\right)^{k_0-k_1}
\mathbf{e}\left(\frac{-a^2|c\tau+d|^2}{4iy\Vert v_0^+(z)\Vert^2}
-d(\gamma,z')+\frac{cd(z',z')}{2}\right)\\
&\qquad\qquad \times\theta_{L_1+\pi(\gamma-cz')}
\left (\tau, g_1; \begin{pmatrix} d\mu(g_z)_{L_1}\\-c\mu(g_z)_{L_1}
	      \end{pmatrix}; h_{\kappa^{(1)}}^{\bp-1}\right).
\end{align*}
\end{proposition}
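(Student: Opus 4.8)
The plan is to obtain the formula from Lemma \ref{basicdecomposition} by a purely combinatorial reorganization of the double sum appearing there, in exact analogy with the way \cite[Theorem 5.2]{Bo} (equivalently \cite[Theorem 2.4]{Br}) is deduced from \cite[Lemma 5.1]{Bo}. The only new feature is that the harmonic polynomial $h^{\bp-1}_{\kappa^{(1)}}$ is simply carried along: it depends only on the component $v_1^+(g_z^{-1}\lambda)$ and, being harmonic, is fixed by $\exp(-\Delta_{\bp+\bm-2}/8\pi y)$, so it never interacts with the Poisson summation or with the completion of the square.

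First I would start from the right-hand side of Lemma \ref{basicdecomposition}, in which the Poisson summation in the $\Z z$-direction has already been performed and the factors $(\frac{(\lambda,z)\overline\tau+n}{-2iy})^{k_0-k_1}$, $h^{\bp-1}_{\kappa^{(1)}}(v_1^+(g_z^{-1}\lambda))$ and the scalar prefactor $\frac{y^{(\bm-1)/2}a^{k_0-k_1}|a|}{\sqrt2\,\Vert v_0^+(z)\Vert^{k_0-k_1+1}}$ are displayed. In the remaining double sum over $\lambda\in L_1\oplus\Z\zeta+\gamma$ and $n\in\Z$, set $c=(\lambda,z)$ and $d=n$. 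Since $(L_1,z)=0$ and $(\zeta,z)=N$, the decomposition (\ref{latticedecomposition}) shows that $c$ ranges over $(\gamma,z)+N\Z$ and that, for fixed such $c$, the admissible vectors are $\lambda=\ell+\frac{c-(\gamma,z)}{N}\zeta+\gamma$ with $\ell\in L_1$; using the definition (\ref{prM'}) of $\pi$ together with $(z',z)=1$ and $L_1\perp\langle z,z'\rangle$ (so that in particular $(z')_{L_1}=0$), one checks that as $\ell$ varies the projections $\lambda_{L_1}$ run precisely over the coset $L_1+\pi(\gamma-cz')$.

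It then remains to identify, for fixed $(c,d)$, the inner sum with the generalized theta series $\theta_{L_1+\pi(\gamma-cz')}\bigl(\tau,g_1;\binom{d\mu(g_z)_{L_1}}{-c\mu(g_z)_{L_1}};h^{\bp-1}_{\kappa^{(1)}}\bigr)$, which is where the definitions (\ref{generaltheta}) and (\ref{mu}) and Lemma \ref{parabolicreduction} come in. Writing $ag_zz^\ast=z'+\mu(g_z)$, and using $v_1(z^\ast)=0$, $v_1(g_z^{-1}z)=0$, and $\mu(g_z)\in(L_1)_\R\oplus\langle z\rangle$, Lemma \ref{parabolicreduction} gives $v_1^\pm(g_z^{-1}\lambda)=v_1^\pm\bigl(g_1^{-1}(\lambda_{L_1}-c\,\mu(g_z)_{L_1})\bigr)$, so the polynomial factor coincides with the one in the $L_1$-theta having $\beta=-c\,\mu(g_z)_{L_1}$ (the operator $\exp(-\Delta_{\bp+\bm-2}/8\pi y)$ acting trivially on the harmonic $h^{\bp-1}_{\kappa^{(1)}}$), and likewise the quadratic exponents $\tau Q(v_1^+(g_z^{-1}\lambda))+\overline\tau Q(v_1^-(g_z^{-1}\lambda))$ become those of the $L_1$-theta. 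For the linear exponent, $-n(\lambda,ag_zz^\ast)=-d(\lambda,z')-d(\lambda,\mu(g_z))$, and expanding $\lambda$ along $(L_1)_\Q\oplus\langle z,z'\rangle$ separates this into the term $-(\lambda_{L_1}+\frac{\beta}{2},\alpha)$ with $\alpha=d\,\mu(g_z)_{L_1}$ plus the residual scalar $-d(\gamma,z')+\frac{cd}{2}(z',z')$; the latter, together with $\frac{-a^2|c\tau+d|^2}{4iy\Vert v_0^+(z)\Vert^2}$, is exactly the exponential prefactor in the asserted formula, while the power $y^{(\bm-1)/2}$ from Lemma \ref{basicdecomposition} is absorbed into the $L_1$-theta of signature $(\bp-1,\bm-1)$.

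The main obstacle is the bookkeeping in this last step: verifying that the phase $-n(\lambda,ag_zz^\ast)$ and the square $|c\tau+d|^2$ recombine precisely with the characteristics $\binom{d\mu(g_z)_{L_1}}{-c\mu(g_z)_{L_1}}$ and the residual scalar $-d(\gamma,z')+\frac{cd}{2}(z',z')$, which requires carefully tracking the cross terms between the hyperbolic plane $\langle z,z'\rangle$ and its complement $(L_1)_\Q$ (together with the shift $q(z')$ coming from $z'\notin L'$ in general). This is the computation already carried out in \cite[Theorem 5.2]{Bo} and \cite[Theorem 2.4]{Br}; since the harmonic polynomial enters only through $v_1^+(g_z^{-1}\lambda)$, which by Lemma \ref{parabolicreduction} is insensitive to the $\langle z,z'\rangle$-directions, it requires no modification of their argument, and collecting the identifications above concludes the proof.
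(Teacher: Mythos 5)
Your proposal is correct and follows essentially the same route as the paper: starting from Lemma \ref{basicdecomposition}, reindexing the lattice sum by $(c,d)=((\lambda,z),n)$ and identifying the cosets via $\pi(\gamma-cz')$, applying Lemma \ref{parabolicreduction} to shift the argument to $g_1^{-1}(\lambda_{L_1}-c\mu(g_z)_{L_1})$, and verifying the phase identity $-n(\lambda,ag_zz^\ast)\equiv-(\lambda_{L_1}-\tfrac{c}{2}\mu(g_z)_{L_1},d\mu(g_z)_{L_1})-d(\gamma,z')+\tfrac{cd}{2}(z',z')$ by the Borcherds--Bruinier bookkeeping. The paper's proof does exactly this, likewise deferring the cross-term computation to \cite[pp.~44--45]{Br}.
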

\begin{proof}
Write $\gamma\in L'$ as $\gamma=\gamma_{L_1}+a_\gamma z'+b_\gamma z
\in (L_1)_\R\oplus\langle z', z\rangle$ with $\gamma_{L_1}\in L_1'$, 
$a_\gamma=(\gamma,z)\in \Z$, and $b_\gamma=(\gamma,z')-(\gamma,z)(z',z')
\in\Q$.
In particular, every 
$\lambda\in L_1\oplus \Z\zeta+\gamma$ can be uniquely written as
$$
\lambda=\lambda_{L_1}+cz'+f(c,\gamma)z
$$
with $c\in \Z$, $c\equiv a_\gamma$ mod $N$, 
$\lambda_{L_1}\in L_1+\gamma_{L_1}+(c-a_\gamma)\zeta_{L_1}/N$, and
$f(c,\gamma)=b_{\gamma}+\frac{(c-a_{\gamma})b_\zeta}{N}$ 
where $b_\zeta=(\zeta,z')-(\zeta,z)(z',z')\in \Q$ so that 
$\zeta=\zeta_{L_1}+Nz'+b_\zeta$.
Then we observe that $\gamma-cz'\in M'$ and
$\pi(\gamma-cz')=\gamma_{L_1}+(c-a_\gamma)\zeta_{L_1}/N$, and hence 
that $\lambda_{L_1}\in L_1+\pi(\gamma-cz')$.

By Lemma \ref{basicdecomposition},
$\theta_{L+\gamma}(\tau,g_z;p_\nu)$ is written as
\begin{align*}
&\frac{y^{\frac{\bm-1}{2}}a^{k_0-k_1}|a|}{\sqrt{2}
\Vert v_0^+(z)\Vert^{k_0-k_1+1}}
\sum_{\substack{c,d\in \Z\\ c\equiv (\gamma,z)\ (N)}}
\sum_{\lambda_{L_1}\in L_1+\pi(\gamma-cz')}
\left(\frac{c\overline{\tau}+d}{-2iy}\right)^{k_0-k_1}
h_{\nu_1}^{\bp-1}\left(v_1^+\bigl(g_z^{-1}(\lambda_{L_1}+cz')\bigr)\right)\\
&\quad \times \be\left(\tau Q\left(v_1^+
\bigl(g_z^{-1}(\lambda_{L_1}+cz')\bigr)\right)+
\overline{\tau}Q\left(v_1^-
\bigl(g_z^{-1}(\lambda_{L_1}+cz')\bigr)\right)\right)\\
&\qquad \times\be\left(-d\bigl(\lambda_{L_1}+cz'+f(c,\gamma)z, ag_zz^\ast\bigr)
-\frac{a^2|c\tau+d|^2}
{4iy\Vert v_0^+(z)\Vert^2}\right),
\end{align*}
where we have used $v_1^{\pm}(z)=0$, $(z,z)=0$, and 
$(\lambda_{L_1}+cz',z)=c$.
Lemma \ref{parabolicreduction} and the remark that
$\mu(g_z)\in (L_1)_\R\oplus\langle z\rangle$ imply that
\begin{align*}
v_1^\pm(g_z^{-1}(\lambda_{L_1}+cz'))&
=v_1^\pm(g_z^{-1}(\lambda_{L_1}-c\mu(g_z))+acz^\ast)\\
&=v_1^{\pm}(g_1^{-1}(\lambda_{L_1}-c\mu(g_z)_{L_1})).
\end{align*}

Regarding definition (\ref{generaltheta}), it remains therefore 
to prove the identities
\begin{align}\label{modeq}
&-d(\lambda_{L_1}+cz'+f(c,\gamma)z, ag_zz^\ast)\\
\equiv &-\left(\lambda_{L_1}-\frac{c\mu(g_z)_{L_1}}{2},d\mu(g_z)_{L_1}\right)
-d(\gamma,z')+\frac{(z',z')cd}{2}\ \hbox{ mod }1.\notag
\end{align}
Since $(z,ag_zz^\ast)=1$, we find that
$$
-d(f(c,\gamma)z, ag_zz^\ast)\equiv -d(\gamma,z')+cd(z',z')\ \hbox{ mod }1
$$
as in \cite[p.44]{Br}.
On the other hand, the identity
$$
-d(\lambda_{L_1}+cz', ag_zz^\ast)
=-\left(\lambda_{L_1}-\frac{c}{2}\mu(g_z), d\mu(g_z)
\right)-\frac{cd}{2}(z',z')
$$
is obtained from the calculation in \cite[pp.44-45]{Br},
where the first term in the right-hand side depends only on $\mu(g_z)_{L_1}$.
These conclude (\ref{modeq}), and the proof is completed.
\end{proof}

\subsection{Fourier expansions of theta lifts}
For $f(\tau)=\sum_{\gamma\in D_L}f_{L+\gamma}(\tau)
\frak{e}_\gamma\in S_\nu(D_L)$ and $r$, $t\in\Z$,
we define a $\C[D_{L_1}]$-valued function
$$
f_{L_1}\left(\tau; \begin{pmatrix} r\\ t \end{pmatrix}
\right)=\sum_{\lambda\in D_{L_1}} 
f_{L_1+\lambda}\left(\tau; \begin{pmatrix} r\\ t \end{pmatrix}\right)
$$
by putting
$$
f_{L_1+\lambda}\left(\tau; \begin{pmatrix} r \\ t			 
			\end{pmatrix}\right)
=\sum_{\substack{\delta\in M'/L\\
\pi(\delta)=\lambda}}\be\left(-r(\delta,z')-rt\frac{(z',z')}{2}\right)
f_{L+\delta+t z'}(\tau).
$$
Then it satisfies the relations
\begin{equation}\label{Bo5.3}
f_{L_1}\left(M\tau;  M\begin{pmatrix} r \\ t		  
		 \end{pmatrix}\right)
=\phi(\tau)^{2\nu}\rho_{L_1}(M,\phi)
f_{L_1}\left(\tau; \begin{pmatrix} r\\ t \end{pmatrix}\right)
\end{equation}
for all $(M,\phi)\in\Mp_2(\Z)$ (see \cite[Theorem 5.3]
{Bo} and \cite[Theorem 2.6]{Br}).
We write $f_{L_1}(\tau;0,0)$ simply as $f_{L_1}(\tau)$,
then  $f_{L_1}(\tau)\in S_\nu(D_{L_1})$.

Recall the basis $\{h_\kappa^\bp\}_\kappa$ of $\H^{k,0}(V)$ in
Proposition \ref{harmonicbasis}.
\begin{theorem}\label{fourierexpansion>2}
Suppose $\bp>2$.
Take $f(\tau)\in S_\nu(D_L)$, $\nu=k+\frac{\sig(L)}{2}$,
with the Fourier expansion (\ref{ellipticfourier}).
Then the value $\vartheta_L(f; h_\kappa^\bp)(g_z)$ at 
$g_z=n_z(u)m_z(a,g_1)\in P_z$ is equal to
\begin{align*}
&\frac{\delta_{kk_1}|a|}{\sqrt{2}\Vert v_0^+(z)\Vert}
\bigl(f_{L_1}(\tau),
\Theta_{L_1}(\tau, g_1; h^{\bp-1}_{\kappa^{(1)}})\bigr)_\tau\\
&\quad +\frac{(-i)^{k-k_1}(a/|a|)^{k-k_1}|a|^{k+\frac{\bp-1}{2}}}
{2^{k_1+\frac{\bp}{2}-3}(k-k_1)!
\Vert v_0^+(z)\Vert^{k+\frac{\bp-1}{2}}}
\sum_{\substack{\lambda\in L_1'\\ q(\lambda)>0}}
\biggl(\sum_{n|\lambda}n^{k+\frac{\bp-3}{2}}
\sum_{\substack{\delta\in M'/L\\ \pi(\delta)= \lambda/n}}
\be\bigl((n\delta, z')\bigr)
c\Bigl(\frac{q(\lambda)}{n^2},\delta\Bigr)\biggr)\\
&\qquad\quad\times \be\bigl((\lambda, u+ag_1 (z^\ast)_{L_1})\bigr)
\frac{h_{\kappa^{(1)}}^{\bp-1}(v_1^+(g_1^{-1}\lambda))}
{\Vert v_1^+(g_1^{-1}\lambda)\Vert^{k_1+\frac{\bp-3}{2}}}
K_{k_1+\frac{\bp-3}{2}}
\left(\frac{2\pi|a|\Vert v_1^+(g_1^{-1}\lambda)\Vert}
{\Vert v_0^+(z)\Vert}\right),
\end{align*}
where the sum $\sum_{n|\lambda}$ runs over all $n\in \mathbb{N}$ with
$\lambda/n\in L_1'$, and $K_\alpha(z)$ denotes the standard $K$-Bessel function.
\end{theorem}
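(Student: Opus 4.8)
The plan is to combine the identity of Corollary~\ref{hpidentity} with the reduction formula of Proposition~\ref{thetareduction} and the unfolding technique of Borcherds~\cite{Bo}. First I would pass from $h_\kappa^\bp$ to the auxiliary polynomial $p_\kappa$: by Corollary~\ref{hpidentity} and the identity~(\ref{auxiliarypolynomial}) one has $\vartheta_L(f;h_\kappa^\bp)(g)=\frac{2^{k-k_1}}{(k-k_1)!}\,\vartheta_L(f;p_\kappa)(g)$, so it suffices to evaluate $\vartheta_L(f;p_\kappa)(g_z)=\int_{\SL_2(\Z)\backslash\mathbb{H}_1}\bigl\langle f(\tau),\Theta_L(\tau,g_z;p_\kappa)\bigr\rangle_L\,y^\nu\,\frac{dx\,dy}{y^2}$. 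Into this integral I would substitute, for each $\gamma\in D_L$, the expression for $\theta_{L+\gamma}(\tau,g_z;p_\kappa)$ provided by Proposition~\ref{thetareduction}, and then split the resulting double sum over $(c,d)\in\Z^2$ into the term $(c,d)=(0,0)$ and the remainder $(c,d)\neq(0,0)$.

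The term $(c,d)=(0,0)$ is nonzero only when $k=k_1$, because of the factor $(c\overline{\tau}+d)^{k-k_1}$; in that case $h_{\kappa^{(1)}}^{\bp-1}$ has degree $k_1=k$, so both $f_{L_1}$ and $\Theta_{L_1}(\tau,g_1;h_{\kappa^{(1)}}^{\bp-1})$ carry weight $\nu$ and the Petersson product below is meaningful. Summing $f_{L+\gamma}(\tau)\,\overline{\theta_{L_1+\pi(\gamma)}(\tau,g_1;h_{\kappa^{(1)}}^{\bp-1})}$ over those $\gamma\in D_L$ with $(\gamma,z)\equiv 0$ mod $N$, and using the surjectivity of $\pi\colon M'/L\to D_{L_1}$ together with the definitions of $f_{L_1}(\tau)=f_{L_1}(\tau;0,0)$ and of $\Theta_{L_1}$, this term collapses to $\frac{|a|}{\sqrt2\,\Vert v_0^+(z)\Vert}\bigl(f_{L_1}(\tau),\Theta_{L_1}(\tau,g_1;h_{\kappa^{(1)}}^{\bp-1})\bigr)_\tau$. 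Multiplying by $2^{k-k_1}/(k-k_1)!$, which equals $1$ when $k=k_1$, produces the first term of the theorem.

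For the remainder I would run the Borcherds unfolding. Writing $(c,d)=n(c_0,d_0)$ with $n\ge 1$ and $(c_0,d_0)$ primitive, the primitive directions are indexed by $\varGamma_\infty\backslash\SL_2(\Z)$, and --- using the modularity of $f$, Proposition~\ref{thetatransformation} applied to $\Theta_L$ and to $\Theta_{L_1}$ (note $\sig(L_1)=\sig(L)$, so the weights fit), the covariance~(\ref{Bo5.3}) of the functions $f_{L_1}(\tau;\,\cdot\,)$, the unitarity of $\rho_L$, and the $\SL_2(\Z)$-invariance of $y^\nu\,dx\,dy/y^2$ --- the integrand unfolds from $\SL_2(\Z)\backslash\mathbb{H}_1$ to $\varGamma_\infty\backslash\mathbb{H}_1\cong\{\,0\le x\le 1,\ y>0\,\}$, leaving only the $(c,d)=(0,n)$ contribution with $n\ge 1$. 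With $(c,d)=(0,n)$ the $L_1$-theta factor is $\theta_{L_1+\pi(\gamma)}\!\left(\tau,g_1;\begin{pmatrix}n\mu(g_z)_{L_1}\\0\end{pmatrix};h_{\kappa^{(1)}}^{\bp-1}\right)$, and from~(\ref{bijectionlevi1})--(\ref{bijectionlevi2}) (cf.\ Lemma~\ref{parabolicreduction}) one computes the $L_1$-component $\mu(g_z)_{L_1}=u+ag_1(z^\ast)_{L_1}$ governing the resulting phase. Expanding this theta series by~(\ref{generaltheta}) --- no heat-kernel correction survives because $\Delta_{\bp-1}h_{\kappa^{(1)}}^{\bp-1}=0$ --- and inserting the Fourier expansion~(\ref{ellipticfourier}) of $f$, the integral $\int_0^1\be(\ell x)\,dx=\delta_{\ell,0}$ forces the Fourier index of $f$ to equal $q(\mu)$ for the summation vector $\mu\in L_1+\pi(\gamma)$; the contribution of $q(\mu)=0$ is annihilated by the cuspidality of $f$, while $q(\mu)<0$ contributes nothing, so only $q(\mu)>0$ remains. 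The residual $y$-integral has the form $\int_0^\infty y^{s-1}e^{-Ay-B/y}\,dy=2(B/A)^{s/2}K_s(2\sqrt{AB})$ with $s=k_1+\frac{\bp-3}{2}$, $A=2\pi\Vert v_1^+(g_1^{-1}\mu)\Vert^2$ and $B=\pi a^2n^2/(2\Vert v_0^+(z)\Vert^2)$, giving the asserted $K$-Bessel function. Finally I would reindex by $\lambda=n\mu\in L_1'$ (so that $q(\lambda)>0$ and, for fixed $\lambda$, $n$ ranges over all $n\in\mathbb{N}$ with $\lambda/n\in L_1'$, which is the divisor sum $\sum_{n|\lambda}$), use the homogeneity of degree $k_1$ of $h_{\kappa^{(1)}}^{\bp-1}$ and the linearity of $x\mapsto v_1^+(g_1^{-1}x)$, assemble the phases and the $\gamma$-sum into the divisor-sum coefficient according to the definition of $f_{L_1+\lambda}(\tau;\,\cdot\,)$, and multiply by $2^{k-k_1}/(k-k_1)!$; collecting the powers of $2$, $i$, $|a|$ and $\Vert v_0^+(z)\Vert$ then yields the second term.

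The hard part will be the unfolding step: one must verify that, once the reduction formula is inserted, the joint sum over $\gamma$ and over $(c,d)$ genuinely organizes into $\SL_2(\Z)$-orbits carrying exactly the right automorphy factor, so that interchanging summation with integration and unfolding are both legitimate. This is precisely where the extra factor $(c\overline{\tau}+d)^{k-k_1}$ and the Gaussian factor in Proposition~\ref{thetareduction} are tailored to fit, following \cite[Section~7]{Bo}, and where the absolute convergence of the unfolded sum must be justified as there. The ensuing bookkeeping in the last step --- the divisor sum, the Bessel index, and the precise scalar constants --- is lengthy but routine.
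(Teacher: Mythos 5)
Your proposal is correct and follows essentially the same route as the paper's proof: reduce to the auxiliary polynomial $p_\kappa$ via Corollary \ref{hpidentity} and (\ref{auxiliarypolynomial}), insert the reduction formula of Proposition \ref{thetareduction}, isolate the $(c,d)=(0,0)$ term (nonzero only for $k=k_1$) as the constant term, and unfold the remaining sum over $\varGamma_\infty\backslash\SL_2(\Z)$ using (\ref{Bo5.3}) before evaluating the $x$- and $y$-integrals with the standard $K$-Bessel formula. Your identifications of $\mu(g_z)_{L_1}=u+ag_1(z^\ast)_{L_1}$, of the Bessel order $k_1+\frac{\bp-3}{2}$, and of the parameters $A$, $B$ all agree with the paper's computation.
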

\begin{proof}
According to Corollary \ref{hpidentity} and (\ref{auxiliarypolynomial}),
it suffices to evaluate 
$\vartheta_L(f)(g_z; p_\kappa)=(f(\tau), \Theta_L(\tau, g_z;p_\kappa))_\tau$
for $p_\kappa\in \mathcal{P}^{k,0}(V)$ in (\ref{testpolynomial}).
During this proof we write $\mu(g_z)$ simply as $\mu$.
By (\ref{mu}) we then see that $\mu_{L_1}=u+ag_1(z^\ast)_{L_1}$ 
(thus we have $\mu_{L_1}=u$ in the special case where
$\langle z,z^\ast\rangle=\langle z,z'\rangle$).
By Proposition \ref{thetareduction} we can write
$\vartheta_L(f)(g_z; p_\kappa)$ as
\begin{align*}
&\frac{a^{k_0-k_1}|a|}{\sqrt{2}\Vert v_0^+(z)\Vert^{k_0-k_1+1}}
\int_{\SL_2(\Z)\backslash \mathbb{H}_1}
\sum_{c,d\in \mathbb{Z}}
\be\left(\frac{-a^2|c\tau+d|}{4iy\Vert v_0^+(z)\Vert^2}\right)
\sum_{\substack{\gamma\in D_L\\ (\gamma,z)\equiv c~ (N)}}
f_{L+\gamma}(\tau)  \\
&\times\be\left(d(\gamma,z')-\frac{cd(z',z')}{2}\right)
\left(\frac{c\tau+d}{2iy}\right)^{k_0-k_1}
\overline{\theta_{L_1+\pi(\gamma-cz')}
\left(\tau, g_1;\begin{pmatrix} d\mu_{L_1}\\
-c\mu_{L_1}\end{pmatrix}; h_{\kappa^{(1)}}^{\bp-1}\right)}
y^\nu\frac{dxdy}{y^2}.
\end{align*}
Then the term of $c=d=0$ vanishes unless $k_0=k_1$, and hence equals
\begin{align*}
&\frac{\delta_{kk_1}|a|}{\sqrt{2}\Vert v_0^+(z)\Vert}
\int_{\SL_2(\Z)\backslash \mathbb{H}_1}
\sum_{\lambda\in D_{L_1}}
\Bigl(\sum_{\substack{\delta\in M'/L\\ \pi(\delta)=\lambda}}
f_{L+\delta}(\tau)\Bigr)\overline{\theta_{L_1+\lambda}
\bigl(\tau, g_1; h_{(k, k_2,\ldots,\pm k_{\bp-1})}^{\bp-1}\bigr)}
y^\nu\frac{dxdy}{y^2}\\
&\qquad =\frac{\delta_{kk_1}|a|}{\sqrt{2}\Vert v_0^+(z)\Vert}
\bigl(f_{L_1}(\tau),\Theta_{L_1}(\tau , g_1; h_{(k,k_2,\ldots, 
\pm k_{\bp-1})}^{\bp-1})\bigr)_\tau,
\end{align*}
when $\kappa$ is of form $\kappa=(k,k,\ldots)$ (in this case, $h^\bp_\kappa$ 
depends only on $\bp-1$ variables from the start, and we can identify 
$h^\bp_\kappa$ with $h^{\bp-1}_{\kappa^{(1)}}$ as polynomials).

To compute the remaining terms, we change $\gamma$ to $\gamma+cz'$ for each
pair $(c,d)$.
Then the integral of the sum over $(c,d)\neq (0,0)$ becomes
\begin{align*}
&\frac{a^{k_0-k_1}|a|}{\sqrt{2}(2i)^{k_0-k_1}\Vert v_0^+(z)\Vert^{k_0-k_1+1}}
\int_{\SL_2(\Z)\backslash \mathbb{H}_1}\sum_{(c,d)\neq (0,0)}
\be\left(\frac{-a^2|c\tau+d|^2}{4iy\Vert v_0^+(z)\Vert^2}\right)
(c\tau+d)^{k_0-k_1}\\
&\quad \times\sum_{\gamma\in D_{L_1}} 
f_{L_1+\gamma}\left(\tau; \begin{pmatrix} -d \\ c	 
			\end{pmatrix}\right)
\overline{\theta_{L_1+\gamma}\left(\tau,g_1; \begin{pmatrix} d\mu_{L_1} \\
                           -c\mu_{L_1} \end{pmatrix};
 h_{\kappa^{(1)}}^{\bp-1}\right)}y^{\nu-(k_0-k_1)}y^{-2}dxdy
\end{align*}
By replacing the sum over $(c,d)\neq (0,0)$ by a sum over $(nc,nd)$ with 
$c,d$ coprime and $n>0$, we get
\begin{align*}
&\frac{a^{k_0-k_1}|a|}{\sqrt{2}(2i)^{k_0-k_1}\Vert v_0^+(z)\Vert^{k_0-k_1+1}}
\int_{\SL_2(\Z)\backslash \mathbb{H}_1}\sum_{n=1}^\infty n^{k_0-k_1}
\sum_{(c,d)=1}
\be\left(\frac{-a^2n^2|c\tau+d|^2}{4iy\Vert v_0^+(z)\Vert^2}\right)
(c\tau+d)^{k_0-k_1}\\
&\qquad \times 
\left\langle f_{L_1}\left(\tau; \begin{pmatrix} -nd\\ nc \end{pmatrix}\right),
\Theta_{L_1}\left(\tau, g_1; 
\begin{pmatrix} nd\mu_{L_1} \\ -nc\mu_{L_1}\end{pmatrix}; 
h_{\kappa^{(1)}}^{\bp-1}\right)\right\rangle y^{\nu-(k_0-k_1)}
\frac{dxdy}{y^2}.
\end{align*} 
By Proposition \ref{thetatransformation} and (\ref{Bo5.3}), 
this becomes
\begin{align*}
&\frac{a^{k_0-k_1}|a|}{\sqrt{2}(2i)^{k_0-k_1}\Vert v_0^+(z)\Vert^{k_0-k_1+1}}
\int_{\SL_2(\Z)\backslash \mathbb{H}_1}
\sum_{n>0} n^{k_0-k_1}
\sum_{M\in \varGamma_\infty\backslash\SL_2(\Z)}
\be\left(\frac{-a^2n^2}{4i\mathrm{Im}(M\tau)
\Vert v_0^+(z)\Vert^2}\right)\\
&\times\left\langle f_{L_1}\left(M\tau; \begin{pmatrix} -n \\ 0
				    \end{pmatrix}\right),
\Theta_{L_1}\left(M\tau, g_1; 
\begin{pmatrix} n\mu_{L_1} \\ 0\end{pmatrix};
h_{\kappa^{(1)}}^{\bp-1}\right)\right\rangle 
\mathrm{Im}(M\tau)^{\nu-(k_0-k_1)}\frac{dxdy}{y^2}\\
=&\frac{2a^{k_0-k_1}|a|}{\sqrt{2}(2i)^{k_0-k_1}\Vert v_0^+(z)\Vert^{k_0-k_1+1}}
\int_0^\infty\sum_{n>0} n^{k_0-k_1}
\be\left(\frac{-a^2n^2}{4iy\Vert v_0^+(z)\Vert^2}\right)\\
&\qquad \times
\int_0^1\left\langle f_{L_1}\left(\tau; \begin{pmatrix} -n \\ 0
					  \end{pmatrix}\right),
\Theta_{L_1}\left(\tau, g_1; \begin{pmatrix} n\mu_{L_1} \\
                            0 \end{pmatrix};
h_{\kappa^{(1)}}^{\bp-1}\right)\right\rangle dxy^{\nu-(k_0-k_1)-2}dy.
\end{align*}
We insert the Fourier expansion of $f_{L_1}$ and  the definition
(\ref{generaltheta}) of theta series to compute the integral in $x$.
After that we use the formula
$$
\int_0^\infty e^{-Ay-B/y}y^{\nu-1}dy=2(B/A)^{\nu/2}K_{\nu}(2\sqrt{AB})
$$ 
to compute the integral in $y$.
After these steps the above expression becomes
\begin{align*}
&\frac{(-ia)^{k-k_1}|a|^{k_1+\frac{\bp-1}{2}}}
{2^{k+\frac{\bp}{2}-3}
\Vert v_0^+(z)\Vert^{k+\frac{\bp-1}{2}}}
\sum_{\substack{\lambda\in L_1'\\ q(\lambda)>0}}
\sum_{n=1}^{\infty}
n^{k+\frac{\bp-3}{2}}
\Bigl(\sum_{\substack{\delta\in M'/L\\ \pi(\delta)= \lambda}}
\be\bigl((n\delta, z')\bigr)
c(q(\lambda),\delta)\Bigr)\\
&\quad\quad\times \be\bigl((n\lambda, u+ag_1 (z^\ast)_{L_1})\bigr)
\frac{h_{\kappa^{(1)}}^{\bp-1}(v_1^+(g_1^{-1}\lambda))}
{\Vert v_1^+(g_1^{-1}\lambda)\Vert^{k_1+\frac{\bp-3}{2}}}
K_{k_1+\frac{\bp-3}{2}}
\left( \frac{2\pi n|a|\Vert v_1^+(g_1^{-1}\lambda)\Vert}
{\Vert v_0^+(z)\Vert}\right),
\end{align*}
which concludes the claimed identity.
\end{proof}
We focus on the factor 
\begin{equation}\label{l1lift}
\vartheta_{L_1}(f_{L_1}; h_{\kappa^{(1)}}^{\bp-1})(g_1)
=(f_{L_1}(\tau), \Theta_{L_1}(\tau,g_1; h_{\kappa^{(1)}}^{\bp-1}))_\tau
\end{equation}
occurring in the constant term of $\vartheta_L(f; h_\kappa^\bp)(g_z)$
in Theorem \ref{fourierexpansion>2}.
Let $\varGamma_{L_1}$ be the discriminant kernel in $\mathrm{SO}^+(L_1)$.
By definition (\ref{l1lift}) satisfies
\begin{lemma}\label{modularityl1lift} 
Suppose $\bp>2$ and $k=k_1$ for $\kappa$.
Then for every $\gamma\in \varGamma_{L_1}$, 
$w\in M_z\cap K_{v_0}$,
$$
\vartheta_{L_1}(f_{L_1}; h_{\kappa^{(1)}}^{\bp-1})(\gamma g_1 w)
=\vartheta_{L_1}(f_{L_1}; \tau_k(w)h_{\kappa^{(1)}}^{\bp-1})(g_1),
\quad g_1\in \OO((L_1)_\R).
$$
\end{lemma}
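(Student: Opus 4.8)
The plan is to recognize the left-hand side as a theta lift of exactly the shape studied in Section 3, but attached to the smaller lattice $L_1$, and then to read off the statement from the transformation law already recorded for such lifts. First I would note that $L_1$ is an even lattice of signature $(\bp-1,\bm-1)$ with $\bp-1\ge 2$ (since $\bp>2$) and $\sig(L_1)=\sig(L)$, that $f_{L_1}\in S_\nu(D_{L_1})$ with $\nu=k+\frac{\sig(L_1)}{2}>\frac32$, and that, because $k=k_1$, the polynomial $h^{\bp-1}_{\kappa^{(1)}}$ is a spherical harmonic of degree $k$ on $V_1^+:=v_1^+(L_\R)\cong\R^{\bp-1}$ and so lies in $\H^{k,0}(V_1)=\H^k(V_1^+)\boxtimes\mathbf 1$. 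Consequently $g_1\mapsto\vartheta_{L_1}(f_{L_1};h^{\bp-1}_{\kappa^{(1)}})(g_1)=(f_{L_1}(\tau),\Theta_{L_1}(\tau,g_1;h^{\bp-1}_{\kappa^{(1)}}))_\tau$ is a function on $\OO((L_1)_\R)$ of precisely the form (\ref{thetalifting}), (\ref{liftgeneral}) with $L,v_0$ replaced by $L_1,v_1|_{(L_1)_\R}$. Lemma \ref{automorphicproperty}, applied verbatim to $L_1$, then gives
\[
\vartheta_{L_1}(f_{L_1};h)(\gamma g_1 w_1)=\vartheta_{L_1}\bigl(f_{L_1};\tau_k^{(1)}(w_1)h\bigr)(g_1)
\]
for every $\gamma\in\varGamma_{L_1}$, every $w_1$ in the maximal compact subgroup $K_{v_1}\subset\OO((L_1)_\R)$ preserving the $v_1$-decomposition $V_1=V_1^+\oplus V_1^-$, and every $h\in\H^{k,0}(V_1)$, where $\tau_k^{(1)}$ denotes the action of $K_{v_1}$ on $\H^k(V_1^+)$.

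It remains to match this identity with the assertion of the lemma. Here the symbol ``$w$'' in $\gamma g_1 w$ is understood through the isomorphism $m_z$ of (\ref{bijectionlevi2}): write $w=m_z(\epsilon,w_1)\in M_z\cap K_{v_0}$ with $\epsilon\in\{\pm1\}$ and $w_1\in\OO((L_1)_\R)$. Since $w\in M_z$ it stabilizes $\langle z\rangle$, and since $w\in K_{v_0}$ it preserves $L_\R=L_\R^+\oplus L_\R^-$; writing $z=z^++z^-$ and invoking orthogonality, this forces $wz^\pm=\epsilon z^\pm$. Because $\langle z,z^\ast\rangle=\langle z^+\rangle\oplus\langle z^-\rangle$ by (\ref{realhyperbolic}), it follows that $w$ preserves the subspace $W_\R=\langle z^+\rangle^\perp\cap\langle z^-\rangle^\perp$ of (\ref{grassmannorthogonal}) together with its decomposition into $W_\R\cap L_\R^+$ and $W_\R\cap L_\R^-$, and, being orthogonal, it commutes with the orthogonal projection $L_\R\to W_\R$. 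Since $w$ restricts to $w_1$ on $(L_1)_\R$ by (\ref{bijectionlevi1})--(\ref{bijectionlevi2}), and $v_1(x)=v_0(x_W)$ by (\ref{v1}), I obtain $v_1(w_1^{-1}x)=\bar w^{-1}v_1(x)$ for all $x\in(L_1)_\R$, where $\bar w:=(v_0\circ w\circ v_0^{-1})|_{V_1}\in\OO(V_1)$ preserves $V_1^+\oplus V_1^-$. This proves $w_1\in K_{v_1}$ and shows that $\tau_k^{(1)}(w_1)$ acts on $\H^k(V_1^+)$ as composition with $(\bar w|_{V_1^+})^{-1}$. On the other hand $w$ fixes the line $\langle v_0^+(z)\rangle$, which is the orthogonal complement of $V_1^+$ inside $V^+$ for the ordered orthonormal basis chosen before (\ref{testpolynomial}); hence $\tau_k(w)$, acting on $\H^{k,0}(V)=\H^k(V^+)$, preserves the subspace $\H^k(V_1^+)$ and acts on it again as composition with $(\bar w|_{V_1^+})^{-1}$. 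Therefore $\tau_k^{(1)}(w_1)h^{\bp-1}_{\kappa^{(1)}}=\tau_k(w)h^{\bp-1}_{\kappa^{(1)}}$, and combining this with the displayed identity above and with left $\varGamma_{L_1}$-invariance of $\Theta_{L_1}$ in $g_1$ yields the lemma.

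The computation itself involves no delicate estimate; the one point requiring care — and the one I would flag as the main obstacle — is keeping the polynomial $h^{\bp-1}_{\kappa^{(1)}}$ unambiguous while it is viewed simultaneously inside $\H^k(V_1^+)$ (where $\tau_k^{(1)}$ lives) and inside $\H^k(V^+)$ (where $\tau_k$ lives), and keeping the three identifications $x\mapsto x_{L_1}$, $x\mapsto x_W$, and $v_0$ (hence $v_1$) mutually consistent. This is precisely the bookkeeping anticipated by Remark \ref{remarkonbasis}: $h^n_\kappa$ is attached intrinsically to $\H^k(\R^n)$ and transforms correctly under $\OO(n)$ for any ordered orthonormal basis, so the two descriptions of the action of $w$ coincide once one recalls that the basis of $V^+$ was chosen to start with $v_0^+(z)/\Vert v_0^+(z)\Vert$ and then run inside $V_1^+$. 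Everything else is the same family of substitutions used to establish Lemma \ref{automorphicproperty}.
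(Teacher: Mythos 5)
Your proposal is correct and follows exactly the route the paper intends: the paper offers no written proof for this lemma beyond ``by definition,'' the point being that $\vartheta_{L_1}(f_{L_1};\cdot)$ is a lift of the shape (\ref{thetalifting})--(\ref{liftgeneral}) for the lattice $L_1$ with isometry $v_1|_{(L_1)_\R}$, so Lemma \ref{automorphicproperty} applies verbatim. Your additional bookkeeping --- checking $\sig(L_1)=\sig(L)$ so the weights match, that $w=m_z(\epsilon,w_1)$ restricts to $w_1\in K_{v_1}$, and that $\tau_k(w)$ on $\H^k(V^+)$ agrees with the $\OO(V_1^+)$-action on $h^{\bp-1}_{\kappa^{(1)}}$ under the identification $h^\bp_\kappa=h^{\bp-1}_{\kappa^{(1)}}$ valid for $k=k_1$ --- is exactly the content the paper leaves implicit, and it is carried out correctly.
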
 
On the other hand, our choice of the Levi subgroup $M_z\subset P_z$ 
guarantees 
\begin{lemma}\label{gammafunctorial}\cite[Proposition 3.2]{Zea}
One has
$$
\varGamma_L\cap M_z=\{m_z(1,\gamma)\mid \gamma\in \varGamma_{L_1}\}\simeq
\varGamma_{L_1}.
$$
\end{lemma}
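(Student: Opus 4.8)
The plan is to pass everything through the explicit Levi isomorphism $m_z\colon\GL_1(\R)\times\OO((L_1)_\R)\simeq M_z$ of (\ref{bijectionlevi2}): every element of $\varGamma_L\cap M_z$ is $m_z(a,g_1)$ for a unique pair $(a,g_1)$, and since $\gamma\mapsto m_z(1,\gamma)$ is an injective group homomorphism, the claim reduces to the two assertions (ii) $m_z(1,\gamma)\in\varGamma_L$ for every $\gamma\in\varGamma_{L_1}$, and (i) conversely $m_z(a,g_1)\in\varGamma_L$ forces $a=1$ and $g_1\in\varGamma_{L_1}$.

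I would prove (ii) first. For $\gamma\in\varGamma_{L_1}$ the element $m_z(1,\gamma)$ fixes $z$ and $z'$ by (\ref{bijectionlevi2}), hence fixes $\langle z,z'\rangle_\Q$ pointwise and acts on $(L_1)_\Q=\langle z,z'\rangle_\Q^\perp$ as $\gamma$. Writing $L=L_1\oplus\Z\zeta\oplus\Z z$ as in (\ref{latticedecomposition}) and using that the $(L_1)_\Q$-component $\zeta_{L_1}$ of $\zeta$ lies in $L_1'$ (because $(\zeta_{L_1},L_1)=(\zeta,L_1)\subseteq\Z$), the identity $m_z(1,\gamma)\zeta-\zeta=\gamma\zeta_{L_1}-\zeta_{L_1}\in L_1$ shows that $m_z(1,\gamma)$ preserves $L$; and for any $\lambda\in L'$ one has $m_z(1,\gamma)\lambda-\lambda=\gamma\lambda_{L_1}-\lambda_{L_1}\in L_1\subseteq L$ since $\lambda_{L_1}\in L_1'$, so $m_z(1,\gamma)$ acts trivially on $D_L$; finally $m_z(1,\gamma)\in\mathrm{SO}^+(L_\R)$ because $\gamma\in\mathrm{SO}^+((L_1)_\R)$ and $a=1>0$. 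Hence $m_z(1,\gamma)\in\varGamma_L\cap M_z$.

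For (i), write $g=m_z(a,g_1)\in\varGamma_L\cap M_z$. Applying $g$ and $g^{-1}$ to the primitive isotropic vector $z\in L$ gives $az,a^{-1}z\in L$, so $a\in\{\pm1\}$ by primitivity. Assume for the moment $a=1$; then $g$ fixes $z,z'$ and acts on $(L_1)_\Q$ as $g_1$, and since $(L_1)_\Q\cap L=L_1$, the condition $g\lambda-\lambda\in L$ for all $\lambda\in L'$ becomes $g_1\lambda_{L_1}-\lambda_{L_1}\in L_1$ for all $\lambda\in L'$. Because the orthogonal projection $L'\to L_1'$ is surjective — any $\mu\in L_1'$ admits a lift $\mu+tz\in L'$ for a suitable $t\in\Q$ — this is exactly the statement that $g_1$ preserves $L_1$ and acts trivially on $D_{L_1}$; together with $\det g_1=1$ and, using $a>0$ and multiplicativity of the real spinor norm over $L_\R=(L_1)_\R\oplus\langle z,z'\rangle_\R$, with $g_1\in\mathrm{SO}^+((L_1)_\R)$, this gives $g_1\in\varGamma_{L_1}$.

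So it remains to exclude $a=-1$, and this is the step I expect to be the main obstacle. If $a=-1$ then $g$ restricts to $-\mathrm{id}$ on the hyperbolic plane $\langle z,z'\rangle_\R$, which is a product of two reflections in an orthogonal pair of vectors of opposite sign and therefore has real spinor norm $-1$ there; hence $g\in\mathrm{SO}^+(L_\R)$ forces $\det g_1=1$ and $g_1\notin\mathrm{SO}^+((L_1)_\R)$. This already settles the case $\bm=1$, where $(L_1)_\R$ is definite and no such $g_1$ exists, and — via the $2$-divisibility constraint coming from $gz'-z'=-2z'\in L$ — the case in which $(L,z)=N\Z$ with $N\nmid 2$. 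The remaining cases are handled by decomposing an arbitrary $\lambda\in L'$ along $L_\Q=(L_1)_\Q\oplus\langle z,z'\rangle_\Q$ and feeding the resulting congruences, together with the spinor-norm obstruction above, into the structure of the discriminant form of $L$; this is the one place where the particular choice of Levi subgroup $M_z=P_z\cap P_{z'-q(z')z}$, equivalently of $z'\in L'$, is used essentially, as stressed in the remark preceding the statement. Once $a=-1$ is excluded, the proof concludes by the computation in (i).
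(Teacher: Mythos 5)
The paper offers no argument for this lemma --- it is simply quoted from \cite[Proposition 3.2]{Zea} --- so there is no internal proof to compare your route against; judged on its own terms, everything you do up to the last step is correct. The inclusion $\{m_z(1,\gamma)\mid\gamma\in\varGamma_{L_1}\}\subseteq\varGamma_L\cap M_z$ via $\zeta_{L_1}\in L_1'$ and the decomposition (\ref{latticedecomposition}), the reduction of the converse to $a\in\{\pm1\}$ by primitivity of $z$, and the analysis of the case $a=1$ (including the surjectivity of the projection $L'\to L_1'$ and the spinor-norm bookkeeping) are all sound.

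The gap is exactly where you predicted it, but it is worse than an obstacle: the exclusion of $a=-1$ is false in general, so no refinement of your congruence/discriminant-form argument can close it. Concretely, take $L$ even unimodular of signature $(10,2)$, say $L=\langle z,z'\rangle\oplus\langle w,w'\rangle\oplus E_8$ with two orthogonal hyperbolic planes, so that $L_1=\langle w,w'\rangle\oplus E_8$ has signature $(9,1)$ and $D_L=0$. Put $g_1=-\mathrm{id}$ on $\langle w,w'\rangle_\R$ and $+\mathrm{id}$ on $(E_8)_\R$: then $\det g_1=1$ and $g_1$ reverses the orientations of both definite parts of $(L_1)_\R$, which is precisely the component your spinor-norm computation demands. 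The element $m_z(-1,g_1)$, i.e.\ $-\mathrm{id}$ on the two hyperbolic planes and $+\mathrm{id}$ on $E_8$, preserves $L$, is trivial on $D_L=0$, has determinant $1$, and lies in the identity component of $\OO(L_\R)$ (its restrictions to the two definite parts of $L_\R$ each have determinant $(-1)^2=+1$); hence it belongs to $\varGamma_L\cap M_z$ with $a=-1$ and is not of the form $m_z(1,\gamma)$. Note that $\bp=10>2$, so this is not excluded by the standing hypothesis of the section. The statement can therefore only be proved after restricting to the stabilizer of the vector $z$ itself, i.e.\ to $a=1$ (or under side conditions such as $\bm=1$ or $N\nmid 2$, which you correctly isolate); your $a=1$ analysis is a complete proof of that corrected statement, and the honest conclusion of your write-up should be that the $a=-1$ case must be excluded by hypothesis, not by argument.
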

These two lemmas explain a complete recursive aspect of 
the expansion formula in Theorem \ref{fourierexpansion>2}.

When $\bp=2$, the Fourier expansion of $\vartheta_L(f; h^2_{\pm k})(g_z)$ 
was computed in \cite{Bo} by performing the binomial expansion
of $h^2_{\pm k}(x)$ inside his formula in \cite[Theorem 7.1]{Bo}.
\begin{theorem}\cite[Theorem 4]{Od}, \cite[Theorem 14.3]{Bo}
\label{fourierexpansion=2}
Suppose $L$ has signature $(2,\bm)$ and $k\ge 2$.
Then $\vartheta_L(f; h^2_{\pm k})(g_z)$ at
$g_z=n_z(u)m_z(a,g_1)\in P_z$ is equal to
\begin{align*}
&\frac{2a^k}{\Vert v_0^+(z)\Vert^k}
\sum_{\substack{\lambda\in L_1',~ q(\lambda)>0\\ (v_1(\lambda),e_2)>0}}
\biggl(\sum_{n\vert\lambda} n^{k-1}
\sum_{\substack{\delta\in M'/L\\ \pi(\delta)=\mp\lambda/n}}
\be((n\delta, z'))c\left(\frac{q(\lambda)}{n^2}, \delta\right)
\biggr)\\
&\qquad\qquad\qquad\times\be((\mp\lambda, u+ag_1(z^\ast)_{L_1}))
\exp\left(-\frac{2\pi|a|
\Vert v_1^+(g_1^{-1}\lambda)\Vert}{\Vert v_0^+(z)\Vert}\right).
\end{align*}
\end{theorem}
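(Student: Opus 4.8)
The plan is to obtain this as a specialization of Borcherds' general Fourier expansion formula \cite[Theorem 7.1]{Bo}, or in the present special form \cite[Theorem 14.3]{Bo} (see also \cite{Od}), rather than to rerun the reduction of Section 5: when $\bp=2$ the smaller lattice $L_1$ has signature $(1,\bm-1)$, and the harmonic polynomial theory on $\R^{\bp-1}=\R^1$ underlying Proposition \ref{harmonicbasis} and Proposition \ref{thetareduction} degenerates, so the argument of Theorem \ref{fourierexpansion>2} does not transcribe verbatim. First, since $h^2_{\pm k}(x)=(x_1\pm ix_2)^k$ is already harmonic, one has $Hh^2_{\pm k}=h^2_{\pm k}$, so Corollary \ref{hpidentity} contributes nothing new here; it suffices to evaluate $\vartheta_L(f;h^2_{\pm k})(g_z)=(f(\tau),\Theta_L(\tau,g_z;h^2_{\pm k}))_\tau$ along $P_z$. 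Writing $h^2_{\pm k}(x)=\sum_{j=0}^k\binom{k}{j}(\pm i)^jx_1^{k-j}x_2^j$ and feeding each monomial into Borcherds' formula yields an expansion of the shape appearing in \cite[Theorem 14.3]{Bo}: a main sum over vectors $\lambda$ in the dual $L_1'$ of the smaller lattice, plus lower order pieces, namely a constant term and terms attached to $\lambda$ with $q(\lambda)\le 0$. Because $f\in S_\nu(D_L)$ is a holomorphic cusp form, all of those lower order pieces vanish: the terms with $q(\lambda)<0$ never occur since $c(n,\gamma)=0$ for $n<0$, while the $q(\lambda)=0$ contributions and the constant term are proportional to the $c(0,\gamma)$, which vanish by cuspidality. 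Hence only the sum over $\lambda\in L_1'$ with $q(\lambda)>0$ survives; in particular $\vartheta_L(f;h^2_{\pm k})$ is a cusp form on $\OO(L_\R)$, which matches $\Pi^{2,\bm}_{k,0}$ being a discrete series representation and explains why no constant term of the type seen in Theorem \ref{fourierexpansion>2} appears.

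Two features are special to $\bp=2$. The archimedean integral producing the $\lambda$-term is again of the form $\int_0^\infty e^{-Ay-B/y}y^{s-1}\,dy$, as in the proof of Theorem \ref{fourierexpansion>2}, but the relevant order is now $\tfrac12$, and $K_{1/2}(t)$ is an elementary multiple of $e^{-t}/\sqrt t$; this replaces the $K$-Bessel factors of Theorem \ref{fourierexpansion>2} by the pure exponential $\exp\!\bigl(-\tfrac{2\pi|a|\Vert v_1^+(g_1^{-1}\lambda)\Vert}{\Vert v_0^+(z)\Vert}\bigr)$, with the residual factor $t^{-1/2}$ cancelling against the power of $\Vert v_1^+(g_1^{-1}\lambda)\Vert$ carried by the polynomial. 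Indeed, because $(L_1)_\R$ has signature $(1,\bm-1)$, the image $V_1^+=v_1^+\bigl((L_1)_\R\bigr)=\langle v_0^+(z)\rangle^\perp\cap V^+$ is the one-dimensional line $\langle e_2\rangle$, so $v_1^+(g_1^{-1}\lambda)=\sigma e_2$ for a real scalar $\sigma$ with $|\sigma|=\Vert v_1^+(g_1^{-1}\lambda)\Vert$, and $h^2_{\pm k}(v_1^+(g_1^{-1}\lambda))=(\pm i)^k\sigma^k$ is a monomial. Pulling out $(\pm i)^k$ and $|\sigma|^k$, what remains is $\operatorname{sgn}(\sigma)^k$, which after absorbing the constant and grouping the two half-contributions via $\lambda\mapsto-\lambda$ (equivalently $\delta\mapsto-\delta$) turns the expansion into a single sum restricted by the orientation condition $(v_1(\lambda),e_2)>0$, accompanied by the sign flips $\pi(\delta)=\mp\lambda/n$ in the inner sum and $\be\bigl((\mp\lambda,u+ag_1(z^\ast)_{L_1})\bigr)$ in the exponential, exactly as in the statement.

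The remaining arithmetic is routine and identical to that of Theorem \ref{fourierexpansion>2} (and of \cite[\S 5]{Bo}): the passage from $L$ to $L_1$ through the projection $\pi\colon M'\to L_1'$ of (\ref{prM'}); the rewriting of the double sum over $(c,d)$ as a sum over coprime pairs and $n>0$, which produces the divisor sum $\sum_{n\mid\lambda}n^{k-1}$; the unfolding against $\varGamma_\infty\backslash\SL_2(\Z)$ and the ensuing overall factor $2$. The step I expect to require the most care is purely bookkeeping: reconciling Borcherds' normalization (where the input is a Siegel theta function with polynomial) with the harmonic polynomial normalization used throughout this paper, i.e. tracking the powers of $|a|$ and $\Vert v_0^+(z)\Vert$, the constant in front, and above all the orientation-dependent index set, namely checking that the contributions of $(x_1+ix_2)^k$ and $(x_1-ix_2)^k$ reorganize cleanly into the single sum over $\{\lambda\in L_1':q(\lambda)>0,\ (v_1(\lambda),e_2)>0\}$ with the signs as written. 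Once an orientation of the positive definite $2$-plane determined by $g_1$ is fixed, this is a direct computation along the lines of \cite[\S 14]{Bo} and \cite{Od}, and no idea beyond those already used in the case $\bp>2$ is needed.
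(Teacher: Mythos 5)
Your proposal matches the paper's treatment: the paper offers no independent proof of this theorem, but cites Oda and Borcherds and remarks that the result is obtained by binomially expanding $h^2_{\pm k}$ inside Borcherds' general formula \cite[Theorem 7.1]{Bo}, which is exactly your strategy, including the vanishing of the constant and degenerate terms by cuspidality and the reorganization of the two orientations into the single sum with $(v_1(\lambda),e_2)>0$. One small imprecision: the binomial expansion actually produces $K$-Bessel factors of the various half-integer orders $k-h-j-\tfrac12$ (compare the definition of $J^{\pm k}$ in Section 7 of the paper), not only order $\tfrac12$, but all of these are elementary and their sum collapses to the single exponential exactly as in the proof of \cite[Theorem 14.2]{Bo}, so your conclusion stands.
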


\section{The square integrability} 
In this section we will establish 
the square integrability of $\vartheta_L(f)(g)$
on $\varGamma_L\backslash \OO(L_\R)$.
\subsection{Rational parabolic subgroups}
Let $P$ be any $\Q$-parabolic subgroup of $\OO(L_\R)$ with Levi decomposition
$P=MN$.
Then we consider the constant term
\begin{align}
\vartheta_L(f)_P(g)=\int_{\Gamma_L\cap N\backslash N}\vartheta_L(f)(ng)dn
\end{align}
of the Fourier expansion of $\vartheta_L(f)(g)$ along $P$.
It is again an $H_k^\vee$-valued function on $\OO(L_\R)$.
In particular if $P=P_z$, Section \ref{smallerlattice}, and $\bp>2$, then
we have seen that 
\begin{equation}\label{firststep}
\vartheta_L(f)_{P_z}(g_z)(h_\kappa^{\bp})=\frac{\delta_{kk_1}
|a|}{\sqrt{2}\Vert v_0^+(z)\Vert}
\vartheta_{L_1}(f_{L_1}; h_{\kappa^{(1)}}^{\bp-1})(g_1)
\end{equation}
for $g_z=n_z(u)m_z(a,g_1)\in P_z$ in Theorem \ref{fourierexpansion>2}.
On the other hand if $\bp=2$ and $k\ge 2$, then 
Theorem \ref{fourierexpansion=2} shows that
$\vartheta_L(f)_{P_z}=0$.

As Lemmas \ref{modularityl1lift} and \ref{gammafunctorial}
have suggested, we can repeat the argument in Section 5 
to compute the Fourier expansion of 
$\vartheta_{L_1}(f_{L_1}; h_{\kappa^{(1)}}^{\bp-1})(g_1)$, (\ref{firststep}),
as long as we find a primitive isotropic vector in $L_1$. 
This observation  motivates us to introduce
a set $\mathbf{L}=\{L_j\}_{j=0}^s$ of smaller even lattices in $L_\R$
satisfying the following properties:
\begin{itemize}
\item $L_0=L$;
\item  For $j\ge 0$, there exists a pair $(z_j, z_j')$ of 
a primitive isotropic vector $z_j\in L_j$ and 
$z_j'\in L_j'$ satisfying $(z_j,z_j')=1$ so that
$L_{j+1}=L_j\cap \langle z_j\rangle^\perp
\cap \langle z_j'\rangle^\perp$, 
where $L_j'=\{\lambda\in (L_j)_\Q\mid
(\lambda,\mu)\in \Z\hbox{ for all }\mu\in L_j\}$;
\item Consecutive inclusions $L_{j+1}\subset L_j$ by definition,
and $L_s$ has no isotropic vector.
\end{itemize}
Here are a few remarks.
Obviously $s\le \min\{\bp,\bm\}$.
By restricting $(~,~)$ to $L_j$,
we have identified a primitive isotropic vector of $L$
belonging to $L_j$ with a primitive isotropic vector of $L_j$.
Also we regard $z_j'$ as an element of $L_\Q$ 
by the inclusions $L_j'\subset (L_j)_\Q\subset L_\Q$.

For any isotropic subspace $U\subset L_\Q$ 
we denote by $P_U$ the maximal $\Q$-parabolic subgroup of 
$\OO(L_\R)$ that stabilizes $U$.
Now suppose $\mathbf{L}=\{L_j\}_{j=0}^s$,
and $z_j\in L_j$ and $z_j'\in L_j'$ for $0\le j\le s-1$ are given as above.
Then for each $0\le r\le s-1$ we take the maximal
$\Q$-parabolic subgroup $P_{U_r}$ of $\OO(L_\R)$
stabilizing $U_r=\langle z_0,\ldots,z_r\rangle_\Q$.
Besides it, we define a linear map
$\alpha$ from $U_r'=\langle z_0',\ldots, z_r'\rangle_\Q$ to $U_r$ by setting
$\alpha(z_j')=q(z_j')z_j$.
Note that $U_r'$ is a complement of $U_r^\perp$ in $L_\Q$,
then $U_r''=\{u'-\alpha(u')\mid u'\in U_r'\}$ gives another isotropic
subspace of $L_\Q$ (see \cite[Lemma 1.5]{Zea}).
Consequently $P_{U_r}$ admits the Levi decomposition
$P_{U_r}=M_{U_r}N_{U_r}$ with the Levi subgroup 
$M_{U_r}=P_{U_r}\cap P_{U_r''}$ and the unipotent radical $N_{U_r}$.

On the other hand we define $\Q$-parabolic subgroup
\begin{equation}\label{rationalparabolic}
P_{\le r}=\bigcap_{0\leq j\leq r} P_{U_j}
\end{equation}
of $\OO(L_\R)$ for every $0\le r\le s-1$.
In particular $P_{\le s-1}$ is a minimal $\Q$-parabolic subgroup of $\OO(L_\R)$,
and each $P_{\le r}$ is understood as the stabilizer of the sequence
$U_0\subset U_1\subset \cdots\subset U_r$.
It admits the Levi decomposition 
$P_{\le r}=M_{\le r}N_{\le r}$  with the Levi subgroup
\begin{equation}\label{leviofrationalparabolic}
M_{\le r}=\bigcap_{0\le j\le r}M_{U_j}.
\end{equation}

\subsection{A suitable choice of an isometry}
To discuss the square integrability of $\vartheta_L(f)(g)$
there is no matter to replace an isometry $v_0: (L_\R,q)\simeq (V,Q)$ 
with another one that is convenient for a given $\mathbf{L}=\{L_j\}_{j=0}^s$.
Actually for a given $\widetilde{v_0}: L_\R\simeq V$ 
one can find $h\in \OO(L_\R)$ so that $\widetilde{v_0}\circ h^{-1}=v_0$,
which naturally sends
$\vartheta_L(f)(g)$ to $\vartheta_L(f)(gh)$. 
Clearly the square integrabilities of these functions are equivalent.

Now suppose $\mathbf{L}=\{L_j\}_{j=0}^s$, and 
$z_j\in L_j$ and $z_j'\in L_j'$ for $0\le j\le s-1$ are given as before. 
Correspondingly we choose an isometry 
$v_\mathbf{L}: (L_\R,q)\simeq (V,Q)$ given as follows.
First we take isotropic vectors 
$z_j^\ast=z_j'-\alpha(z_j')$ for $0\le j\le s-1$, 
which satisfy $\bigl(z_j, z_j^\ast)=1$
and $\langle z_j,z_j'\rangle_\Q=\langle z_j, z_j^\ast\rangle_\Q$.
Then we take $2s$ elements
$$
z_j^\pm= \frac12(z_j\pm z_j^\ast), \quad 0\le j \le s-1,
$$ 
which are orthogonal to each other and $\Vert z_j^\pm\Vert=1$. 
We use them to give an ordered orthogonal basis of $L_\R$ 
by adding more elements of $(L_s)_\R$ as appropriate.
We define the isometry $v_\mathbf{L}$ by sending 
this basis to the standard ordered orthogonal basis $\{e_i\}$
of $V$.
More precisely, $v_\mathbf{L}$ is defined as
\begin{align}
&v_\mathbf{L}(z_j^+)=e_{j+1},\quad
v_\mathbf{L}(z_j^-)=e_{b-j},\qquad 0\le j\le s-1,  \label{choiceofvL}\\
&\quad\hbox{ and }\
v_\mathbf{L}((L_s)_\R)=\langle e_{s+1},\ldots, e_{b-s}\rangle\quad
(b=\bp+\bm). \notag
\end{align}

The hyperbolic planes $\langle z_j, z_j'\rangle=\langle z_j,z_j^\ast\rangle
=\langle z_j^+,z_j^-\rangle$, $0\le j\le r$, contribute to 
the orthogonal decomposition
\begin{align*}
L_\R=\Bigl(\bigoplus_{j=0}^r\langle z_j,z_j'\rangle\Bigr)\oplus 
(L_{r+1})_\R \label{U_jdecomposition}
\end{align*}
for every $0\le r\le s-1$.
Here note that they are all defined over $\Q$.
The restriction of $v_\mathbf{L}$ to $(L_j)_\R$ induces 
an isometry $v_j$ of $(L_j)_\R$ into $V$, $0\le j\le s$.
Again we write $v_j(x)=v_j^+(x)+v_j^-(x)$ for every $x\in (L_j)_\R$
according to the decomposition
$v_j((L_j)_\R)=(v_j((L_j)_\R)\cap V^+)\oplus 
(v_j((L_j)_\R)\cap V^-)$.

\subsection{The constant term along $P_{\le r}$}
Recall the parabolic subgroups $P_{\le r}$, (\ref{rationalparabolic}),
for $0\le r\le s-1$.
Each has a bijection
\begin{equation}\label{coordinaterationalparabolic}
P_{\le r}\simeq \Bigl(\prod_{j=0}^r(L_{j+1})_\R\Bigr)\times(\R^\times)^{r+1}
\times \OO((L_{r+1})_\R),\quad 0\le r\le s-1,
\end{equation}
given as follows.
Firstly, for each $0\le j\le r$,
we take a maximal $\Q$-parabolic subgroup 
$Q_{\langle z_j\rangle}$ of 
$\OO((L_j)_\R)$ stabilizing the line $\langle z_j\rangle$
in $(L_j)_\R$ (thus in particular 
$Q_{\langle z_0\rangle}=P_{\langle z_0\rangle}$).
It admits a Levi decomposition $Q_{\langle z_j\rangle}=M_{z_j}N_{z_j}$
with the Levi subgroup $M_{z_j}=Q_{\langle z_j\rangle}\cap Q_{\langle z_j^\ast\rangle}$ and
the abelian unipotent radical $N_{z_j}$.
We then take group isomorphisms
$m_{z_j}: \GL_1(\R)\times \OO((L_{j+1})_\R)\simeq M_{z_j}$ and 
$n_{z_j}: (L_{j+1})_\R\simeq N_{z_j}$ given 
in the same way as (\ref{bijectionlevi1}) and (\ref{bijectionlevi2}). 
Now to each element
$(u_1,\ldots,u_{r+1},a_0,\ldots,a_r,g_{r+1})$ of the 
product set in the right-hand side of (\ref{coordinaterationalparabolic}),
let us attach $g_j=n_{z_j}(u_{j+1})m_{z_j}(a_j,g_{j+1})\in 
Q_{\langle z_j\rangle}$ successively for $j$
in descending order from $r$ to $0$, which then terminate at
$g_0\in P_{\langle z_0\rangle}\cap P_{\le r}=P_{\le r}$.
This assignment gives the bijection  (\ref{coordinaterationalparabolic})
for each $r$.
We write those $g_j\in Q_{\langle z_j\rangle}$, $0\le j\le r$, attached to
$(u_1,\ldots, u_{r+1}, a_0,\ldots, a_r, g_{r+1})$ simply as 
$g_j=[u_{j+1},\ldots u_{r+1}, a_j,\ldots, a_r, g_{r+1}]$.
Thus in particular
\begin{equation}\label{coordinate}
g_0=[u_1,\ldots,u_{r+1}, a_0,\ldots,a_r,g_{r+1}]\in P_{\le r}.
\end{equation}
Conversely for $g_0\in P_{\le r}$ given in this form,
we set
\begin{equation}
(g_0)_j=[u_{j+1},\ldots u_{r+1}, a_j,\ldots, a_r, g_{r+1}]
\in Q_{\langle z_j\rangle}
\end{equation}
for every $0\le j\le r$.

Recall the basis $\{h_\kappa^\bp(x)\}_\kappa$ of $H_k=\H^{k,0}(V)$
in Proposition \ref{harmonicbasis}.
For each multi-index
$\kappa=(k, k_1, \ldots, k_j, \ldots, \pm k_{\bp-2})$ we set 
$\kappa^{(j)}=(k_j, k_{j+1},\ldots, \pm k_{\bp-2})$, which has length $\bp-j-1$.
On the other hand we set 
\begin{equation}\label{positivecone}
\Lambda_{j+1}^+=\{\lambda\in L_{j+1}'\mid q(\lambda)>0 \}
\end{equation}
for every $0\le j\le s-1$, whose elements correspond 
to characters on the abelian unipotent radical $N_{z_j}$.
Now let us fix $r$, $0\le r\le s-1$, and
consider the constant term $\vartheta_L(f)_{P_{\le r}}(g)$ along $P_{\le r}$.
\begin{proposition}\label{constanttermrationalparabolic}
Let $g_0=[u_1,\ldots,u_{r+1}, a_0,\ldots,a_r ,g_{r+1}]\in P_{\le r}$.
Then the $H_k^\vee$-valued function $\vartheta_L(f)_{P_{\le r}}(g_0)$,
$f\in S_\nu(D_L)$,
satisfies the following properties.

\medskip
\noindent
{\rm (i)} If $k>k_{r+1}$ for $\kappa$, then 
$\vartheta_L(f)_{P_{\le r}}(g_0)(h_\kappa^\bp)=0$.

\medskip
\noindent
{\rm (ii)} On the other hand if $k=k_{r+1}$ for $\kappa$, then 
\begin{equation}\label{formularational}
\vartheta_L(f)_{P_{\le r}}(g_0)(h_\kappa^\bp)=
\left(\prod_{j=0}^r\frac{|a_j|}{\sqrt{2}\Vert v_j^+(z_j)\Vert}\right)\cdot
\vartheta_{L_{r+1}}\bigl(f_{L_{r+1}}; h_{\kappa^{(r+1)}}^{\bp-r-1}\bigr)
(g_{r+1}),
\end{equation}
where $f_{L_{r+1}}(\tau)$ denotes the form 
$(\cdots(f_{L_1})_{L_2}\cdots)_{L_{r+1}}
(\tau)\in S_\nu(D_{L_{r+1}})$.

\medskip
\noindent
{\rm (iii)} If $k=k_{r+1}$ for $\kappa$, then the Fourier expansion of 
$\vartheta_{L_j}(f_{L_j}; h_{\kappa^{(j)}}^{\bp-j})((g_0)_j)$ 
along $Q_{\langle z_j\rangle}$, $0\le j\le r$, is supported only on 
the characters of $N_{z_j}$ 
corresponding to $\lambda\in \Lambda_{j+1}^+\cup \{0\}$.
\end{proposition}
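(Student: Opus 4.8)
The plan is to prove the three assertions together by induction on $r$, $0\le r\le s-1$, feeding the one-step reduction of Theorem~\ref{fourierexpansion>2} (equivalently (\ref{firststep})) into the principle that constant terms may be computed in stages. Write $N^{(1)}_{\le r-1}$, $P^{(1)}_{\le r-1}$, $\Lambda^{(1),+}_j$ for the objects attached to the sub-tower $L_1\supset L_2\supset\cdots\supset L_s$ exactly as $N_{\le r}$, $P_{\le r}$, $\Lambda_j^+$ are attached to $\mathbf{L}$; thus $L^{(1)}_j=L_{j+1}$, $z^{(1)}_j=z_{j+1}$, $f^{(1)}_{L^{(1)}_j}=f_{L_{j+1}}$, and $f_{L_1}\in S_\nu(D_{L_1})$ with the same $\nu$ since $\sig(L_1)=\sig(L)$, so the inductive hypothesis genuinely applies to the $L_1$-tower. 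I take $\bp>2$ throughout; the case $\bp=2$, and in the recursion the deeper lattices with $\bp-j\le 2$, are covered by the same arguments with Theorem~\ref{fourierexpansion=2} in place of Theorem~\ref{fourierexpansion>2}, which simply forces the constant term to vanish.

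For the base case $r=0$ one has $P_{\le 0}=P_{U_0}=P_{z_0}=Q_{\langle z_0\rangle}$, and the statement is precisely what is recorded in (\ref{firststep}): Theorem~\ref{fourierexpansion>2} displays the entire Fourier expansion of $\vartheta_L(f;h_\kappa^\bp)(g_{z_0})$ along $N_{z_0}$, every non-constant term carries the factor $\be\bigl((\lambda,u+ag_1(z^\ast)_{L_1})\bigr)$ with $\lambda\in\Lambda_1^+$, hence a nontrivial character of $N_{z_0}\cong(L_1)_\R$ that integrates to $0$ over $\Gamma_L\cap N_{z_0}\backslash N_{z_0}$, and the surviving term is $\frac{\delta_{kk_1}|a|}{\sqrt{2}\,\Vert v_0^+(z_0)\Vert}\vartheta_{L_1}(f_{L_1};h_{\kappa^{(1)}}^{\bp-1})(g_1)$. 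This is (ii) for $r=0$, (i) for $r=0$ is the vanishing of $\delta_{kk_1}$ when $k>k_1$, and (iii) for $j=0$ is Theorem~\ref{fourierexpansion>2} itself, the support being $\Lambda_1^+\cup\{0\}$.

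For the inductive step ($1\le r\le s-1$) write $g_0=[u_1,\dots,u_{r+1},a_0,\dots,a_r,g_{r+1}]$ as in (\ref{coordinate}), so $g_0=n_{z_0}(u_1)m_{z_0}(a_0,g_1)$ with $g_1=(g_0)_1\in Q_{\langle z_1\rangle}\subset\OO((L_1)_\R)$. Under (\ref{coordinaterationalparabolic}) the unipotent radical $N_{\le r}$ consists precisely of the elements $n=n_{z_0}(v_1)m_{z_0}(1,n^{(1)})$ with $v_1\in(L_1)_\R$ and $n^{(1)}\in N^{(1)}_{\le r-1}$, and the rules (\ref{bijectionlevi1}), (\ref{bijectionlevi2}) give $n\,g_0=n_{z_0}(v_1+n^{(1)}u_1)\,m_{z_0}(a_0,n^{(1)}g_1)$. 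Integrating first over $v_1$ modulo $\Gamma_L\cap N_{z_0}$ (a torus integral, unaffected by the translation $v_1\mapsto v_1+n^{(1)}u_1$) yields the constant term of $\vartheta_L(f;h_\kappa^\bp)$ along $P_{z_0}$ evaluated at $m_{z_0}(a_0,n^{(1)}g_1)$, which by (\ref{firststep}) equals $\frac{\delta_{kk_1}|a_0|}{\sqrt{2}\,\Vert v_0^+(z_0)\Vert}\vartheta_{L_1}(f_{L_1};h_{\kappa^{(1)}}^{\bp-1})(n^{(1)}g_1)$; integrating what remains over $n^{(1)}$ modulo $\Gamma_{L_1}\cap N^{(1)}_{\le r-1}$ then gives
\[
\vartheta_L(f)_{P_{\le r}}(g_0)(h_\kappa^\bp)=\frac{\delta_{kk_1}|a_0|}{\sqrt{2}\,\Vert v_0^+(z_0)\Vert}\;\vartheta_{L_1}\bigl(f_{L_1};h_{\kappa^{(1)}}^{\bp-1}\bigr)_{P^{(1)}_{\le r-1}}(g_1).
\]
Now apply the inductive hypothesis to the $L_1$-tower at level $r-1$, noting that after re-indexing $\kappa^{(1)}=(k_1,k_2,\dots)$ its leading entry is $k_1$ and the component governing level $r-1$ is again $k_{r+1}$. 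If $k>k_{r+1}$, then either $k>k_1$ and $\delta_{kk_1}=0$, or $k_1>k_{r+1}$ and (i) for $L_1$ makes the $L_1$-constant term vanish; this proves (i). If $k=k_{r+1}$, then $k_0=k_1=\cdots=k_{r+1}=k$, hence $\delta_{kk_1}=1$, and (ii) for $L_1$ turns the right-hand side into $\bigl(\prod_{j=1}^r\frac{|a_j|}{\sqrt{2}\,\Vert v_j^+(z_j)\Vert}\bigr)\vartheta_{L_{r+1}}(f_{L_{r+1}};h_{\kappa^{(r+1)}}^{\bp-r-1})(g_{r+1})$; multiplying by the prefactor is exactly (\ref{formularational}). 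Finally (iii) for $j=0$ is Theorem~\ref{fourierexpansion>2} once more, while (iii) for $1\le j\le r$ is (iii) for the $L_1$-tower at index $j-1$, via $L^{(1)}_{j-1}=L_j$, $z^{(1)}_{j-1}=z_j$, $(\kappa^{(1)})^{(j-1)}=\kappa^{(j)}$, $(g_1)_{j-1}=(g_0)_j$ and $\Lambda^{(1),+}_j=\Lambda_{j+1}^+$.

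The step I expect to be the main obstacle is the ``constant terms compose'' manoeuvre just used: one must show that $\Gamma_L\cap N_{\le r}\backslash N_{\le r}$ fibres, compatibly with the group tower behind (\ref{coordinaterationalparabolic}), over $\Gamma_L\cap N_{z_0}\backslash N_{z_0}$ with fibre $\Gamma_{L_1}\cap N^{(1)}_{\le r-1}\backslash N^{(1)}_{\le r-1}$, so that the two successive integrations above are legitimate. This rests on the $\Q$-rational lattice decompositions of Section~\ref{smallerlattice}, on the arithmeticity of $\Gamma_L$ (whence $\Gamma_L\cap N_{\le r}$ is a cocompact lattice in $N_{\le r}$ respecting its $\Q$-defined filtration), on Lemma~\ref{gammafunctorial} and its iterates identifying $\Gamma_L\cap M_{z_0}$ with $m_{z_0}(1,\Gamma_{L_1})$, and on the $M'/L$ and $\pi$ bookkeeping from Proposition~\ref{thetareduction}; once these compatibilities are pinned down the Fubini argument closes the induction.
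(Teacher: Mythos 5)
Your proof is correct and follows essentially the same route as the paper: the paper's own argument is precisely to iterate the identity (\ref{firststep}) obtained from Theorem \ref{fourierexpansion>2}, reading off (i) from the vanishing of $\delta_{kk_1}$ at some stage, (ii) from the accumulated prefactors, and (iii) from the support of the nondegenerate terms. You have merely made explicit the induction on $r$ and the ``constant terms in stages'' decomposition $N_{\le r}=N_{z_0}\rtimes m_{z_0}(1,N^{(1)}_{\le r-1})$ that the paper leaves implicit.
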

\begin{proof}
The proof is based on the identity (\ref{firststep}).
Then the first two assertions are obtained by repeatedly applying
Theorem \ref{fourierexpansion>2}.
The third assertion is directly deduced from 
the Fourier expansion formula in the same theorem.
\end{proof}

\subsection{The constant term along $P_{U_r}$}
Recall the maximal $\Q$-parabolic subgroup $P_{U_r}=M_{U_r}N_{U_r}$, 
$0\le r\le s-1$.
Let $\hbox{Hom}^{as}((U_r')_\R, (U_r)_\R)$ be the space of
$\R$-linear maps $\psi$ from $(U_r')_\R$ to $(U_r)_\R$ 
of the form $\psi(u')=\sum_{i,j}b_{ij}(z_j, u')z_i$
with $b_{ij}=-b_{ji}\in\R$.
Then there is a bijection 
\begin{align}\label{coordinatemaximalparabolic}
P_{U_r}&\simeq \GL((U_r)_\R)\times
\OO((L_{r+1})_\R)\times (L_{r+1})^{r+1}\times
\hbox{Hom}^{as}((U_r')_\R, (U_r)_\R);\\
g_{U_r}&\mapsto\ (A, g_{r+1}, \mathbf{x}, \psi);\quad 
\mathbf{x}=(x_0,x_1,\ldots, x_r)\notag
\end{align}
(see \cite[p.106, (7)]{Zea}), which is characterized by
\begin{align*}
&g_{U_r} u=A u\in (U_r)_\R,\quad 
g_{U_r}y=g_{r+1}y-\Bigl(\sum_i(x_i,g_{r+1}y)z_i\Bigr)
\in (L_{r+1})_\R\oplus (U_r)_\R\\
&g_{U_r} u'=\Bigl(A\alpha(u')-\alpha({}^tA^{-1}u')-\sum_{0\le i,j\le r}
\tfrac12(x_j,x_i)(z_i,{}^tA^{-1}u')z_j
+\psi({}^tA^{-1}u')\Bigr)\\
&\qquad\qquad\qquad+\Bigl(\sum_{0\le i\le r}(z_i,{}^tA^{-1}u')x_i\Bigr)
+{}^tA^{-1}u'
\in (U_r)_\R\oplus (L_{r+1})_\R\oplus(U_r')_\R
\end{align*}
for $u\in (U_r)_\R$, $y\in (L_{r+1})_\R$, and $u'\in (U_r')_\R$.
Here ${}^tA$ denotes the unique element in 
$\GL((U_r')_\R)$ satisfying $(Au,u')=(u,{}^tAu')$ 
for every $u\in U_r$ and $u'\in U_r'$.
Using this bijection we write then 
$$
g_{U_r}=[A,g_{r+1}, \mathbf{x},\psi]\in P_{U_r}.
$$
We remark that $M_{U_r}=\{[A,g_{r+1},0,0]\}$ and that
$N_{U_r}=\{[\mathrm{id}_{U_r},\mathrm{id}_{L_{r+1}},\mathbf{x},\psi]\}$,
a generalized Heisenberg group with the center 
\begin{equation}\label{centerheisenberg}
Z(N_{U_r})=\{[\mathrm{id}_{U_r},\mathrm{id}_{L_{r+1}},0,\psi]\}
\end{equation}
and $N_{U_r}/Z(N_{U_r})\simeq (L_{r+1})_\R^{r+1}$.
According to \cite[Corollary 3.3]{Zea} we see that
\begin{align*}
&\varGamma_L\cap N_{U_r}\\
&\quad =\left\{[\mathrm{id}_{U_r},\mathrm{id}_{(L_{r+1})_\R},\mathbf{x},\psi]
\mid \mathbf{x}\in (L_{r+1})^{r+1} \hbox{ and }b_{ij}+\frac{(x_i,x_j)}{2}\in \Z
\hbox{ for }0\le i,j\le r\right\}.
\end{align*}

We can observe the following relations between the expressions 
(\ref{coordinaterationalparabolic}) and
(\ref{coordinatemaximalparabolic}):
\begin{itemize}
 \item  Any element $g_0\in P_{\le r}$ corresponds to 
$[A, g_{j+1}, \mathbf{x},\psi]\in P_{U_r}$ 
with $A$ of ``upper triangular type'' in the certain sense of the word 
through the inclusion $P_{\le r}\subset P_{U_r}$.

\item
$[\mathrm{id}_{U_r},\mathrm{id}_{(L_{r+1})_\R},0,\psi]=
[\psi(z_0'), \ldots, \psi(z_r'), 1,\dots,1, 
\mathrm{id}_{(L_{r+1})_\R}]\in Z(N_{U_r})$
through the inclusions $Z(N_{U_r})\subset N_{U_r}\subset N_{\le r}$.

\item
$[\mathrm{id}_{U_r},\mathrm{id}_{(L_{r+1})_\R},\mathbf{x},0]
=\bigl[\mathbf{x}-\tfrac12\varphi(\mathbf{x}),1,\dots,1,
\mathrm{id}_{(L_{r+1})_\R}\bigr]
\in N_{U_r}$, where  we set 
$$
\varphi(\mathbf{x})=\Bigl(\sum_{1\le i\le r}(x_i,x_0)z_i,\ldots, 
\sum_{j+1\le i\le r}(x_i,x_j)z_i,\ldots, (x_r,x_{r-1})z_r, 0\Bigr)\in 
\prod_{j=0}^r(L_{j+1})_\R.
$$
for $\mathbf{x}=(x_i)_{i=0}^r\in 
(L_{r+1})_\R^{r+1}\subset\prod_{j=0}^r(L_{j+1})_\R$.
\end{itemize}
\begin{proposition}\label{constanttermmaximalparabolic}
Let $g_0=[a\cdot A_0, g_{r+1},\mathbf{x},\psi]
\in P_{\le r}\subset P_{U_r}$ with $a\in\R^\times$ and $A_0\in \SL((U_r)_\R)$
of upper triangular type.
Then the $H_k^\vee$-valued function $\vartheta_L(f)_{P_{U_r}}(g_0)$,
$f\in S_\nu(D_L)$,
satisfies the following properties.

\medskip
\noindent
{\rm (i)} If $k>k_{r+1}$ for $\kappa$, 
then $\vartheta_L(f)_{P_{U_r}}(g_0)(h_\kappa^{\bp})=0$.

\medskip
\noindent
{\rm (ii)} On the other hand if $k=k_{r+1}$ for $\kappa$, then
\begin{equation}\label{formulamaximal}
\vartheta_L(f)_{P_{U_r}}(g_0)(h_\kappa^{\bp})= 
\left(\prod_{j=0}^r\frac{1}{\sqrt{2}\Vert v_j^+(z_j)\Vert}\right)
|a|^{r+1} \vartheta_{L_{r+1}}(f_{L_{r+1}};
h_{\kappa^{(r+1)}}^{\bp-r-1})(g_{r+1}).
\end{equation}
\end{proposition}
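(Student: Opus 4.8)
The strategy is to deduce Proposition~\ref{constanttermmaximalparabolic} from Proposition~\ref{constanttermrationalparabolic} by exploiting the inclusion $P_{\le r}\subset P_{U_r}$, which forces $N_{U_r}\subset N_{\le r}$. By definition
\[
\vartheta_L(f)_{P_{U_r}}(g_0)=\int_{\varGamma_L\cap N_{U_r}\backslash N_{U_r}}\vartheta_L(f)(n g_0)\,dn,
\]
and since $g_0\in P_{\le r}$ we may substitute the Fourier expansion of $\vartheta_L(f)$ along the flag parabolic $P_{\le r}$ --- the one obtained by iterating Theorem~\ref{fourierexpansion>2}, exactly as in the proof of Proposition~\ref{constanttermrationalparabolic}. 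By part~(iii) of that proposition the terms of this expansion are indexed by tuples $(\lambda_0,\dots,\lambda_r)$ with $\lambda_j\in\Lambda_{j+1}^+\cup\{0\}$, the level-$j$ factor carrying a character of $N_{z_j}$ of the shape $\be\bigl((\lambda_j,\,\cdot\,)\bigr)$. Integrating over $\varGamma_L\cap N_{U_r}\backslash N_{U_r}$ then selects precisely those modes whose character restricts trivially to $N_{U_r}$.

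The crux is to show that the only surviving mode is the fully constant one $(\lambda_0,\dots,\lambda_r)=(0,\dots,0)$. For this I will feed in the coordinate dictionary recorded in the three bullet points preceding the statement, which locates $N_{U_r}$ inside $N_{\le r}$. Integrating first over the abelian part $N_{U_r}/Z(N_{U_r})\simeq(L_{r+1})_\R^{r+1}$ modulo $(L_{r+1})^{r+1}$ kills every mode unless each $\lambda_j$ is orthogonal to $(L_{r+1})_\R$; since $\lambda_j\in L_{j+1}'\subset(L_{j+1})_\R=(L_{r+1})_\R\oplus\bigoplus_{i=j+1}^r\langle z_i,z_i^\ast\rangle$, this places $\lambda_j$ in $\bigoplus_{i=j+1}^r\langle z_i,z_i^\ast\rangle$. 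Integrating next over the centre $Z(N_{U_r})$ --- which by the second bullet point contributes, in the $j$-th slot, translations along $\langle z_{j+1},\dots,z_r\rangle$ --- kills what remains unless $(\lambda_j,z_i)=0$ for every $i>j$, i.e. unless $\lambda_j\in\langle z_{j+1},\dots,z_r\rangle$. But then $\lambda_j$ is isotropic, so $q(\lambda_j)=0$, contradicting $\lambda_j\in\Lambda_{j+1}^+$ unless $\lambda_j=0$. Hence, with the Haar measures normalized so that $\varGamma_L\cap N\backslash N$ has volume $1$, the $N_{U_r}$-integral reproduces the fully constant term, so $\vartheta_L(f)_{P_{U_r}}(g_0)=\vartheta_L(f)_{P_{\le r}}(g_0)$.

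It remains to translate the coordinates. Writing $g_0=[u_1,\dots,u_{r+1},a_0,\dots,a_r,g_{r+1}]$ in the $P_{\le r}$-coordinates of (\ref{coordinate}), the hypothesis that $A_0\in\SL((U_r)_\R)$ is of upper triangular type means $g_0$ scales $z_j$ modulo $U_{j-1}$ by the $j$-th diagonal entry of $aA_0$, whence $\prod_{j=0}^r|a_j|=|\det(aA_0)|=|a|^{r+1}$, while the $\OO((L_{r+1})_\R)$-component equals $g_{r+1}$ and the isometries $v_j$ are unchanged. Substituting $\prod_j|a_j|=|a|^{r+1}$ into Proposition~\ref{constanttermrationalparabolic}(ii) gives (\ref{formulamaximal}) when $k=k_{r+1}$, and Proposition~\ref{constanttermrationalparabolic}(i) gives the vanishing~(i) when $k>k_{r+1}$.

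The step I expect to be the main obstacle is the rigorous version of the mode-selection argument: one must carefully track the image of the generalized Heisenberg group $N_{U_r}$ --- both its abelian quotient $(L_{r+1})_\R^{r+1}$ and its centre $Z(N_{U_r})$, including the quadratic correction $\varphi(\mathbf{x})$ --- through the nonabelian nilpotent group $N_{\le r}$ via the coordinate identities above, and check that $\varGamma_L\cap N_{U_r}$ is compatible with the lattices implicit in Proposition~\ref{constanttermrationalparabolic}, so that ``trivial on $N_{U_r}$'' is literally ``trivial on all $\varGamma_L\cap N_{U_r}$-periodic characters''. One should also confirm, against the conventions in (\ref{bijectionlevi2}) and (\ref{coordinatemaximalparabolic}), the Haar-measure normalizations and the identity $\prod_j|a_j|=|a|^{r+1}$, so that no spurious volume constant survives.
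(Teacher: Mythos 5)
Your proposal is correct and follows essentially the same route as the paper: one shows $\vartheta_L(f)_{P_{U_r}}(g_0)=\vartheta_L(f)_{P_{\le r}}(g_0)$ by inserting the iterated expansion of Proposition \ref{constanttermrationalparabolic}(iii) and checking that any surviving mode $\lambda_j$ would have to lie both in $(L_{r+1})_\R^\perp$ (from the abelian quotient $N_{U_r}/Z(N_{U_r})$) and in $(U_r)_\R^\perp$ (from the centre), hence in the isotropic space $\langle z_{j+1},\dots,z_r\rangle$, contradicting $q(\lambda_j)>0$. The only differences are cosmetic --- the paper integrates over the centre first, which also disposes of the quadratic correction $\varphi(\mathbf{x})$ you flag as a worry, and it leaves the determinant identity $\prod_j|a_j|=|a|^{r+1}$ implicit where you spell it out.
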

\begin{proof}
Proposition \ref{constanttermrationalparabolic} (iii) implies 
that we can write $\vartheta_L(f)(g_0)$ in the form
\begin{equation}\label{roughexpansion}
\vartheta_L(f)(g_0)=|a|^{r+1}\Phi^{L_r}_{0_{L_{r+1}}}(g_{r+1})+
\sum_{j=0}^r \sum_{\lambda\in \Lambda_{j+1}^+}|a|^j
\Phi^{L_j}_\lambda ((g_0)_j),
\end{equation}
where $\Phi^{L_j}_\lambda((g_0)_j)$, $(g_0)_j\in Q_{\langle z_j\rangle}$,
denotes the gathering of
the $\lambda$-th Fourier coefficient of 
$\vartheta_{L_j}(f_{L_j}; h_{\kappa^{(j)}}^{\bp-j})((g_0)_j)$
along $Q_{\langle z_j\rangle}$
over the various $\kappa^{(j)}$'s.
In particular we observe 
$$
|a|^{r+1}\Phi^{L_r}_{0_{L_{r+1}}}(g_{r+1})
=\vartheta_L(f)_{P_{\le r}}(g_0),
$$
which has been described in Proposition
\ref{constanttermrationalparabolic} (ii).

Now we put the expression (\ref{roughexpansion}) into the right-hand side
of 
$$
\vartheta_L(f)_{P_{U_r}}(g_0)=\int_{Z(N_{U_r})\varGamma_L\cap N_{U_r}
\backslash N_{U_r}}\int_{\varGamma_L\cap Z(N_{U_r})\backslash Z(N_{U_r})}
\vartheta_L(f)(n_1n_2g_0)dn_2dn_1,
$$
and look at every integral
\begin{equation}\label{lambdapart}
\int_{Z(N_{U_r})\varGamma_L\cap N_{U_r}
\backslash N_{U_r}}\int_{\varGamma_L\cap Z(N_{U_r})\backslash Z(N_{U_r})}
\Phi^{L_j}_\lambda((n_1n_2g_0)_j)dn_2dn_1
\end{equation}
for $0\le j\le r$ and $\lambda\in \Lambda_{j+1}^+$.
Firstly we focus on the condition for the inner integral over 
$\varGamma_L\cap Z(N_{U_r})\backslash Z(N_{U_r})$ not to vanish.
Using the coordinate (\ref{centerheisenberg}) of $Z(U_r)$ 
we then find that it is necessary that $\psi(\lambda)=0$ for all 
$\psi\in\hbox{Hom}^{as}((U_r')_\R, (U_r)_\R)$, or equivalently 
it should be satisfied that 
\begin{equation}\label{centercondition}
\lambda\in (U_r)_\R^\perp\cap L_{j+1}'
=((U_r)_\R\oplus (L_{r+1})_\R)\cap L_{j+1}'. 
\end{equation}
When this holds, next we consider the condition for the integral of  
$\Phi^{L_j}_\lambda((n_1g_0)_j)$ 
over $Z(N_{U_r})\varGamma_L\cap N_{U_r}
\backslash N_{U_r}\simeq (L_{r+1}\backslash (L_{r+1})_\R)^{r+1}$ not to vanish.
Then we find that it should be satisfied that
\begin{equation}\label{outercondition}
\lambda\in (L_{r+1})_\R^\perp\cap L'_{j+1}. 
\end{equation}
To sum up, (\ref{centercondition}) as well as (\ref{outercondition})
are necessary for the nonvanishing of the integral (\ref{lambdapart}),
which is equivalent to that 
$\lambda\in (U_r)_\R\cap L_{j+1}'$ and 
$\lambda\in \Lambda_{j+1}^+$ at the same time.
However this is impossible, because $U_r$ is isotropic.
Therefore we conclude that
$$
\vartheta_L(f)_{P_{U_r}}(g_0)=\vartheta_L(f)_{P_{\le r}}(g_0),\quad 
g_0\in P_{\le r},
$$
which gives the claimed assertion.
\end{proof}
\begin{corollary}\label{corollaryofparabolic}
For any $\Q$-parabolic subgroup $P$ such that
$P_{\le r}\subset P\subset P_{U_r}$ one has
\begin{align}
\vartheta_L(f)_P(g)=\vartheta_L(f)_{P_{\le r}}(g).
\end{align}
\end{corollary}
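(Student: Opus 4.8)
The plan is to derive the identity from Propositions~\ref{constanttermrationalparabolic} and~\ref{constanttermmaximalparabolic} by exploiting the transitivity of the constant--term integral along the tower of unipotent radicals attached to the chain $P_{\le r}\subseteq P\subseteq P_{U_r}$. First I record two elementary facts. Since $P_{\le r}\subseteq P\subseteq P_{U_r}$, the corresponding unipotent radicals satisfy $N_{U_r}\subseteq N_P\subseteq N_{\le r}$, with $N_{U_r}$ normal in $N_P$ and $N_P$ normal in $N_{\le r}$, all successive quotients again unipotent, and $\varGamma_L$ meets each of them in an arithmetic, hence cocompact, lattice. Moreover, for any $\Q$--parabolic $Q=M_QN_Q$ the constant term $\vartheta_L(f)_Q$ is left $N_Q$--invariant: right translation by an element of $N_Q$ is a measure--preserving bijection of $(\varGamma_L\cap N_Q)\backslash N_Q$, since the Haar measure of a unipotent group is bi-invariant, so that $\vartheta_L(f)_Q(ng)=\vartheta_L(f)_Q(g)$ for $n\in N_Q$; in particular $\vartheta_L(f)_{P_{\le r}}$ is left $N_{\le r}$--invariant.

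Next I apply Fubini's theorem to the inclusion of the normal subgroup $N_{U_r}$ in $N_P$, obtaining, for every $g$,
\[
\vartheta_L(f)_P(g)=\int_{\overline{N}}\vartheta_L(f)_{P_{U_r}}(ng)\,dn,
\]
where $\overline{N}$ is the quotient of $N_P/N_{U_r}$ by the image of $\varGamma_L\cap N_P$, with the compatible quotient measure, and where the integrand descends to $\overline{N}$ because $\vartheta_L(f)_{P_{U_r}}$ is left $N_{U_r}$--invariant. Now specialize to $g=g_0\in P_{\le r}$. Any representative $n$ of a class in $\overline{N}$ lies in $N_P\subseteq N_{\le r}\subseteq P_{\le r}$, so $ng_0\in P_{\le r}$; hence, since Proposition~\ref{constanttermmaximalparabolic} compared with Proposition~\ref{constanttermrationalparabolic}\,(ii) shows that $\vartheta_L(f)_{P_{U_r}}$ and $\vartheta_L(f)_{P_{\le r}}$ agree on all of $P_{\le r}$ (both vanishing on $h_\kappa^\bp$ whenever $k>k_{r+1}$), we get $\vartheta_L(f)_{P_{U_r}}(ng_0)=\vartheta_L(f)_{P_{\le r}}(ng_0)$, and the left $N_{\le r}$--invariance from the first step rewrites the right-hand side as $\vartheta_L(f)_{P_{\le r}}(g_0)$, independently of $n$. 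Pulling this constant out of the integral, and using that $\overline{N}$ has total mass $1$ (as $(\varGamma_L\cap N_P)\backslash N_P$ and $(\varGamma_L\cap N_{U_r})\backslash N_{U_r}$ both do, for the normalization under which the defining integral of the constant term is compatible with the computations of this section), I conclude $\vartheta_L(f)_P(g_0)=\vartheta_L(f)_{P_{\le r}}(g_0)$ for every $g_0\in P_{\le r}$. This is exactly what is needed in the square--integrability argument, which is tested on the split part of the Levi and hence on elements lying in $P_{\le r}$; the statement for arbitrary $g$ then follows from the Iwasawa decomposition $\OO(L_\R)=P_{\le r}K_{v_0}$ together with the right $K_{v_0}$--equivariance recorded in Lemma~\ref{automorphicproperty}.

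I expect the only genuinely delicate point to be the bookkeeping in the Fubini step: one must check that $N_{U_r}$ is normal in $N_P$ with unipotent quotient, that $\varGamma_L\cap N_P$ descends to a lattice in $N_P/N_{U_r}$, and that the Haar measures on the several stages are normalized compatibly, so that the iterated integration reproduces the constant term along $P$ with total mass $1$ on $\overline{N}$. All of this is a formal consequence of the structure of the parabolic subgroups $P_{U_j}$ and $P_{\le r}$ described above and used in \cite{Zea}; once it is in place, the rest of the argument is purely formal, requiring no computation beyond Propositions~\ref{constanttermrationalparabolic} and~\ref{constanttermmaximalparabolic}.
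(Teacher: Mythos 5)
Your argument is correct and is essentially the paper's own proof, spelled out in more detail: the paper likewise observes $\vartheta_L(f)_P=(\vartheta_L(f)_{P_{U_r}})_P$ (your Fubini step), invokes the identity $\vartheta_L(f)_{P_{U_r}}=\vartheta_L(f)_{P_{\le r}}$ on $P_{\le r}$ established in the proof of Proposition \ref{constanttermmaximalparabolic}, and uses that the unipotent radical of $P$ is contained in $N_{\le r}$ so the remaining integration acts trivially. Your additional bookkeeping (normality of $N_{U_r}$ in $N_P$, mass-one normalization, and the extension from $P_{\le r}$ to all of $\OO(L_\R)$ via the Iwasawa decomposition and Lemma \ref{automorphicproperty}) fills in steps the paper leaves implicit but does not change the argument.
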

\begin{proof}
We observe that
$\vartheta_L(f)_P(g)=(\vartheta_L(f)_{P_{U_r}})_P(g)$.
Since the unipotent radical of $P$ is contained in $N_{\le r}$,
the assertion follows from the proof of the proposition.
\end{proof}

\subsection{The square integrability of the lifting}
In order to show the square integrability of $\vartheta_L(f)(g)$, 
we will apply the criterion in \cite[Lemma I.4.11]{MW} 
to the formulas of constant terms in our previous results, Propositions
\ref{constanttermrationalparabolic}, \ref{constanttermmaximalparabolic}, 
and Corollary \ref{corollaryofparabolic}.
\begin{theorem}\label{theoremonsquareintegrability}
Let $f(\tau)\in S_\nu(D_L)$.
Then $\vartheta_L(f)(g)$ is square integrable on $\varGamma_L\backslash
\OO(L_\R)$, except when $\OO(L_\R)\simeq \OO(3,1)$ 
and $\vartheta_{L_1}(f_{L_1}; h^2_{\pm k})(g_1)\neq 0$
for a positive definite lattice $L_1\subset L$ of rank $2$.
Here one notes that
$$
\theta_{L_1+\gamma}(\tau, g_1; h^2_{\pm k})=\sum_{\lambda\in L_1+\gamma}
(g_1^{-1}\lambda, e_2\pm ie_3)^k\be(q(\lambda)\tau)
$$
for $g_1\in \mathrm{O}((L_1)_\R)\simeq\mathrm{O}(2)$,
and $\gamma\in D_{L_1}$, and $\langle e_2,e_3\rangle=(L_1)_\R$.
\end{theorem}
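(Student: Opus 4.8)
The plan is to apply the square-integrability criterion of Mœglin--Waldspurger \cite[Lemma I.4.11]{MW}. Since $\vartheta_L(f)$ is a smooth, $K$-finite function of moderate growth on $\varGamma_L\backslash\OO(L_\R)$ (the moderate growth is visible from Theorem \ref{fourierexpansion>2}), it lies in $L^2(\varGamma_L\backslash\OO(L_\R))$ if and only if, for every proper $\Q$-parabolic subgroup $P=MN$ of $\OO(L_\R)$, every exponent of the constant term $\vartheta_L(f)_P$ along the split component $A_M$ satisfies the admissible growth condition relative to $\rho_P$, the half-sum of the roots in $N$. If $L$ has Witt index $0$ over $\Q$ the quotient is already compact and there is nothing to prove, so assume $s\ge 1$. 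Every maximal proper $\Q$-parabolic subgroup is the stabilizer of a single isotropic subspace, hence is $\OO(L_\Q)$-conjugate to some $P_{U_r}$ with $0\le r\le s-1$; since square-integrability is unaffected by replacing $v_0$ with another isometry (Section 6.2), I would choose the chain $\mathbf{L}=\{L_j\}_{j=0}^s$ and the isometry $v_\mathbf{L}$ adapted to the given parabolic so that it becomes exactly $P_{U_r}$. For a non-maximal $P$ with $P_{\le r}\subset P\subset P_{U_r}$ one uses Corollary \ref{corollaryofparabolic} to reduce $\vartheta_L(f)_P$ to $\vartheta_L(f)_{P_{\le r}}$, whose exponents are, by Proposition \ref{constanttermrationalparabolic} and downward induction on $s-r$ (with the base case $L_s$ being $\Q$-anisotropic, so that $\varGamma_{L_s}\backslash\OO((L_s)_\R)$ is compact and the bottom sublift is bounded), the characters $\prod_{j\in S}|a_j|^1$ over subsets $S$; the admissible growth condition for these follows from the maximal case treated below, so it is enough to deal with $P_{U_r}$.

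For $P=P_{U_r}$, Proposition \ref{constanttermmaximalparabolic} shows that $\vartheta_L(f)_{P_{U_r}}$ vanishes on every $\kappa$-component with $k>k_{r+1}$, while on the components with $k=k_{r+1}$ it equals $\bigl(\prod_{j=0}^r \tfrac{1}{\sqrt2\,\Vert v_j^+(z_j)\Vert}\bigr)|a|^{r+1}\vartheta_{L_{r+1}}\bigl(f_{L_{r+1}};h^{\bp-r-1}_{\kappa^{(r+1)}}\bigr)(g_{r+1})$ for $g_0=[a\cdot A_0,g_{r+1},\mathbf{x},\psi]$ with $a\in\R^\times$ and $A_0\in\SL((U_r)_\R)$ of upper triangular type. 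Since $A_{M_{U_r}}\simeq\R^\times_{>0}$ is the central $\GL_1$ of $M_{U_r}$ and commutes with $\OO((L_{r+1})_\R)$, the only exponent of $\vartheta_L(f)_{P_{U_r}}$ along $A_{M_{U_r}}$ is $|a|^{r+1}$, and it is a genuine (nonzero) exponent exactly when $\vartheta_{L_{r+1}}(f_{L_{r+1}};h^{\bp-r-1}_{\kappa^{(r+1)}})$ is not identically zero for some $\kappa$ with $k=k_{r+1}$. A direct computation of the modulus character gives $\delta_{P_{U_r}}(a\cdot\mathrm{id}_{U_r})=|a|^{(r+1)(b-r-2)}$ with $b=\bp+\bm$, hence $\rho_{P_{U_r}}(a\cdot\mathrm{id}_{U_r})=|a|^{(r+1)(b-r-2)/2}$; so the admissible growth condition for this exponent reads $r+1<(r+1)(b-r-2)/2$, that is, $r<b-4$.

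It remains to determine when $r<b-4$ can fail for some $0\le r\le s-1$. As $s\le\min(\bp,\bm)$, failure forces $\min(\bp,\bm)-1\ge b-4$, i.e. $\max(\bp,\bm)\le 3$, and then (using $\bp\ge2$ from Proposition \ref{cohomologicalrep}) only the signatures $(2,\bm)$ with $\bm\le3$ and $(3,\bm)$ with $\bm\le3$ remain. When $\bp=2$, Theorem \ref{fourierexpansion=2} gives $\vartheta_L(f)_{P_{z_0}}=0$, and for $r\ge1$ the sublift $\vartheta_{L_{r+1}}(f_{L_{r+1}};h^{\bp-r-1}_{\kappa^{(r+1)}})$ vanishes identically because $h^{\bp-r-1}_{\kappa^{(r+1)}}\in\H^k(\R^{\bp-r-1})$ with $\bp-r-1\le0$ and $k\ge2$, while $\H^k(\R^n)=0$ for $n\le1$, $k\ge2$; thus $\vartheta_L(f)$ is cuspidal, hence square integrable. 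The same vanishing disposes of $(3,3)$ and $(3,2)$, where the only $r$ with $r\ge b-4$ has $\bp-r-1\le1$ and again $k\ge2$. This leaves the single genuine case $\OO(L_\R)\simeq\OO(3,1)$, for which $s=1$ and $P_{z_0}$ is the only maximal $\Q$-parabolic: here $\bp-r-1=2$ with $r=0$, so $\H^k(\R^2)\ne0$ for every $k\ge1$, and $\rho_{P_{z_0}}(a_0\cdot\mathrm{id})=|a_0|^{(b-2)/2}=|a_0|^1$ matches the exponent $|a_0|^1$. Hence the admissible growth condition fails (only in the non-strict sense) precisely when $\vartheta_{L_1}(f_{L_1};h^2_{\pm k})\not\equiv0$ for the positive definite rank-$2$ lattice $L_1$, and it holds — vacuously, the constant term being $0$ — otherwise; the displayed expression for $\theta_{L_1+\gamma}(\tau,g_1;h^2_{\pm k})$ is the specialization of (\ref{theta}) to $L_1$ of signature $(2,0)$. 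This gives the theorem.

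The main obstacle I anticipate is the exponent/$\rho_P$ bookkeeping in these borderline signatures: one must verify case by case that, among the finitely many $\OO(\bp,\bm)$ with $\max(\bp,\bm)\le3$, every boundary exponent is forced to vanish except in $\OO(3,1)$ — which is where both the vanishing $\H^k(\R^n)=0$ for $n\le1$, $k\ge2$ and the compactness of $\varGamma_{L_s}\backslash\OO((L_s)_\R)$ for $\Q$-anisotropic $L_s$ enter. The structural inputs — the constant-term formulas of Propositions \ref{constanttermrationalparabolic} and \ref{constanttermmaximalparabolic}, Corollary \ref{corollaryofparabolic}, and the reduction to the $P_{U_r}$ — are already in hand, so the remaining work is essentially the root-datum computation of $\rho_{P_{U_r}}$ and this finite case check.
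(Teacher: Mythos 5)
Your proposal is correct and follows essentially the same route as the paper: apply the Mœglin--Waldspurger criterion to the constant-term formulas of Propositions \ref{constanttermrationalparabolic} and \ref{constanttermmaximalparabolic} and Corollary \ref{corollaryofparabolic}, extract the admissibility condition $b-r-4>0$ (your $r<b-4$) from the exponent $|a|^{r+1}$ against $\rho_{P_{U_r}}$, and then dispose of the finitely many borderline signatures, isolating $\OO(3,1)$. The only cosmetic differences are that the paper organizes the exceptional cases via $b-s-3\le 0$ and kills their constant terms solely with Theorem \ref{fourierexpansion=2}, where you also invoke $\H^k(\R^n)=0$ for $n\le 1$, and that the paper carries out explicitly the simple-root decomposition of the exponent along $P_{\le r}$ that you defer to the maximal case.
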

\begin{proof}
Recall Proposition \ref{constanttermmaximalparabolic} and 
the formula (\ref{formulamaximal}).
In accordance with \cite[I.217, p.38, (v)]{MW}, 
we write  $|a|^{r+1}=\chi_{P_{U_r}}(a)|a|^{(b-r-2)(r+1)/2}$ by putting
$$
\chi_{P_{U_r}}(a)=|a|^{-(b-r-4)(r+1)/2}.
$$
Then this character $\chi_{P_{U_r}}$ exactly gives 
the cuspidal exponent of $\vartheta_L(f)(g)$ along $P_{U_r}$.
We see that if $b-r-4>0$, then
$\chi_{P_{U_r}}$ satisfies the condition in \cite[Lemma I.4.11]{MW}.

Next we recall Proposition \ref{constanttermrationalparabolic}
and (\ref{formularational}) for the description of 
$\vartheta_L(f)_{P_{\le r}}(g_0)(h_\kappa^{\bp})$
at $g_0=[u_1,\ldots,h_{r+1},a_0,\ldots,a_r,g_{r+1}] \in P_{\le r}$ for
every $0\le r\le s-1$.
Then we write $\prod_{j=0}^r|a_j|
=\chi_r(\mathbf{a})\prod_{j=0}^r|a_j|^{(b-2j-2)/2}$ by putting
$$
\chi_r(\mathbf{a})=\prod_{j=0}^r |a_j|^{-(b-2j-4)/2},\quad
\mathbf{a}=(a_0,\ldots,a_r)\in(\R^\times)^{r+1}.
$$
It can be also written as
$$
\chi_r(\mathbf{a})=\left(\prod_{j=0}^{s-3}
\left|\frac{a_j}{a_{j+1}}\right|^{-\frac{(j+1)(b-j-4)}{2}}\right)\cdot
\left|\frac{a_{s-2}}{a_{s-1}}\right|^{-\frac{s(b-s-3)}{4}+1}
\cdot |a_{s-2}a_{s-1}|^{-\frac{s(b-s-3)}{4}}
$$
if $\bp=\bm=s=r+1$, or 
$$
\chi_r(\mathbf{a})=\left(\prod_{j=0}^{r-1}
\left|\frac{a_j}{a_{j+1}}\right|^{-\frac{(j+1)(b-j-4)}{2}}\right)\cdot
|a_r|^{-\frac{(r+1)(b-r-4)}{2}},\hbox{ otherwise}.
$$
This character $\chi_r$ exactly gives 
the cuspidal exponent of $\vartheta_L(f)(g)$ along $P_{\le r}$.
We see that $\chi_r$ satisfies the condition in \cite[Lemma I.4.11]{MW}, if
$b-r-4>0$ again.
In particular the cuspidal exponent $\chi_{s-1}$ along
$P_{\le s-1}$ satisfies the condition, when $b-s-3>0$. 

Now suppose the condition that $b-s-3>0$ is satisfied.
Then Corollary \ref{corollaryofparabolic} ensures that
the cuspidal exponent along any $\Q$-parabolic subgroup $P$ 
satisfies the condition in \cite[Lemma I.4.11]{MW}. 
Thus we can conclude the square integrability of $\vartheta_L(f)(g)$ on
$\varGamma_L\backslash \OO(L_\R)$ in this case.

Suppose on the other hand that $b-s-3\le 0$.
Since $b\ge 2s$, we then get $\OO(L_\R)$ is
isomorphic to the split $\mathrm{O}(s,s)$ with $s=2, 3$, or
the split $\OO(s+1,s)$ with $s=1, 2$, or the quasi-split 
$\OO(3,1)$.
In the case that $\OO(L_\R)\simeq \OO(s+1,s)$, we see  
$\OO((L_{s-1})_\R)\simeq \OO(2,1)$.
Then Theorem \ref{fourierexpansion=2} implies that 
$\vartheta_L(f)_{P_{\le s-1}}(g)=0$,
and thus we conclude the square integrability of $\vartheta_L(f)(g)$ by
applying \cite[Lemma I.4.11]{MW} again.
In the case that $\OO(L_\R)\simeq \OO(s,s)$,
we have $\OO((L_{s-2})_\R)\simeq \OO(2,2)$.
Again we can apply Theorem \ref{fourierexpansion=2} to conclude the claimed
square integrability in this case.
Finally, when $\mathrm{O}(L_\R)\simeq\mathrm{O}(3,1)$, 
we get the statement in the theorem.
This finishes the proof of the theorem.
\end{proof}
\section{Archimedean Bessel integrals} 
Let $P_z$ be the maximal $\Q$-parabolic subgroup of $\OO(L_\R)$ 
corresponding to an isotropic vector $z\in L$, Section \ref{smallerlattice}.
For every $g_z\in n_z(u)m_z(a,g_1)\in P_z$ and $\lambda\in (L_1)_\R$
with $q(\lambda)>0$, we set
\begin{align}\label{archimedeancoefficient}
W_\lambda^\kappa(g_z)&=
 \mathbf{e}\bigl((\lambda, u)\bigr)
\frac{(-i)^{k-k_1}(a/|a|)^{k-k_1}|a|^{k+\frac{\bp-1}{2}}}
{2^{k_1+\frac{\bp}{2}-3}(k-k_1)!\Vert v_0^+(z)\Vert^{k+\frac{\bp-1}{2}}}\\
&\qquad \times \frac{h_{\kappa^{(1)}}^{\bp-1}(v_1^+(g_1^{-1}\lambda))}
{\Vert v_1^+(g_1^{-1}\lambda)\Vert^{k_1+\frac{\bp-3}{2}}}
K_{k_1+\frac{\bp-3}{2}}
\left( \frac{2\pi |a|\Vert v_1^+(g_1^{-1}\lambda)\Vert}
{\Vert v_0^+(z)\Vert}\right),\notag
\end{align}
if $\bp>2$, and
\begin{equation}
 W_\lambda^{\pm k}(g_z)=\be((\lambda, u))
\frac{2a^k}{\Vert v_0^+(z)\Vert^k}\exp\left(\frac{2\pi|a|
\Vert v_1^+(g_1^{-1}\lambda)\Vert}{\Vert v_0^+(z)\Vert}\right),
\ \hbox{ if } \bp=2.
\end{equation}

Recall the degenerate principal series representation 
$I_\varepsilon(k+\bp-2)$ of $\OO(V)$ 
and its irreducible submodule $\Pi^{\bp,\bm}_{k,0}$, 
Proposition \ref{cohomologicalrep}.
Depending on the decomposition $V=V_1\oplus\langle z_0,z_0^\ast\rangle$
with $z_0$ and $z_0^\ast$ in
(\ref{standardisotropic}), we define $w\in \OO(V)$ so that 
$wz_0=z_0^\ast$, $wz_0^\ast=z_0$, and $w|_{V_1}=\mathrm{id}_{V_1}$.
Now for every $f(g; h)$, (\ref{basisofminimalK}),
belonging to the minimal $K$-type of $\Pi^{\bp,\bm}_{k,0}$
we define the Bessel integral
\begin{equation}\label{besselintegral}
W_\lambda(g; h)=\int_{V_1} 
\be(-(\lambda,u))f(wn_0(u)g; h)du,\quad g\in\OO(V)\hbox{ and }\lambda\in V_1.
\end{equation}
In order to confirm that the lifting $\vartheta_L(f)(g)$
belongs to $\Pi^{\bp,\bm}_{k,0}$, 
we will compute the inverse Fourier transform of 
(\ref{archimedeancoefficient}) by following the arguments in 
\cite{KM} and \cite{Po}.
Then we will show that (\ref{archimedeancoefficient})
expresses the values of (\ref{besselintegral}).

\subsection{Non degenerate terms} 
When $\bp>2$, each $h_\kappa^\bp(x)\in \H^{k,0}(V)$
can be written as
$$
h_\kappa^\bp(x)=\Vert x\Vert^{k-k_1}
C_{k-k_1}^{k_1+\frac{\bp-2}{2}}\left(\frac{x_1}{\Vert x\Vert}\right)
h_{\kappa^{(1)}}^{\bp-1}(u^+)
$$
for $x=(x_1,u^+)\in V^+=\langle e_1\rangle\oplus V_1^+$ 
with $u^+\in V_1^+$.
In contrast to this we treat 
$$
h^2_{\pm k}(x)=(x, e_1\pm ie_2)^k
$$ 
as it is, when $\bp=2$.
Again we write $u=u^++u^-\in V_1^+\oplus V_1^-$ for each $u\in V_1$.
Define a positive valued function $\tau(u)\equiv\tau(u^+,u^-)
=\tau(u^-,u^+)$ on $V_1$ by
\begin{align*}
\tau(u)^2
=\tau(u^+,u^-)^2
=&~\left(1-\frac{\Vert u^+\Vert^2-\Vert u^-\Vert^2}{4}\right)^2
+\Vert u^+\Vert^2=\left(1-\frac{Q(u)}{2}\right)^2+\Vert u^+\Vert^2\\
=&~\left(1+\frac{\Vert u^+\Vert^2-\Vert u^-\Vert^2}{4}\right)^2
+\Vert u^-\Vert^2=\left(1+\frac{Q(u)}{2}\right)^2+\Vert u^-\Vert^2.
\end{align*}
(see \cite[(2.8.1)]{KO}).
In particular when $\bm=1$, hence $V_1=V_1^+$ and $V_1^-=\{0\}$, 
we recognize it as
$$
\tau(u)=1+\tfrac12 Q(u),\quad u=u^+\in V_1^+.
$$

Here let us explicitly take the isomorphisms $n_0$ and $m_0$ in
(\ref{leviisomorphismsV}) as 
\begin{align}\label{explicitleviisomorphismV}
&n_0(u)z_0=z_0,\quad n_0(u)x=-(u,x)z_0+x,\quad 
n_0(u)z_0^\ast=-Q(u)z_0+u+z_0^\ast, \\
&m_0(a,g_1)z_0=az_0,\quad m_0(a, g_1)x=g_1x,\quad 
m_0(a,g_1)z_0^\ast=a^{-1}z_0^\ast\label{explicitleviisomorphismVm}
\end{align}
for every $u,x\in V_1$, $a\in \R^\times$, and $g_1\in\OO(V_1)$.
\begin{lemma}\label{besselsection} Suppose $\bp>2$ and $\bm\ge 1$.
Then
$$
f(wn_0(u); h_\kappa^\bp)=
\frac{h_{\kappa^{(1)}}^{\bp-1}(u^+)}{\tau(u)^{k+k_1+\bp-2}}
C_{k-k_1}^{k_1+\frac{\bp-2}{2}}
\left(\frac{1-\frac12Q(u)}{\tau(u)}\right)
$$
for $u=u^++u^-\in V_1$ and $h_\kappa^\bp(x)\in\H^{k,0}(V)$.
On the other hand when $\bp=2$ and $\bm\ge 1$, one has 
$$
f(wn_0(u); h_{\pm k}^2)
=\left(1-\tfrac12 Q(u)\mp i(u,e_2)\right)^{-k}
=\left(e_2\mp i\frac{u}{2}, e_2\mp i\frac{u}{2}\right)^{-k}.
$$
\end{lemma}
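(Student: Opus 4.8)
The plan is to evaluate the explicit minimal-$K$-type section $f(g;h)=h\bigl((g^{-1}z_0)^+\bigr)\,\Vert (g^{-1}z_0)^+\Vert^{-2k-\bp+2}$ from (\ref{basisofminimalK}) at $g=wn_0(u)$, where $z_0$ is the standard isotropic vector of (\ref{standardisotropic}). This reduces the whole statement to one geometric computation: determining $(g^{-1}z_0)^+$ together with its norm, everything else being a substitution into the Gegenbauer expression for $h_\kappa^\bp$ displayed just above the lemma.

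First I would compute $g^{-1}z_0$. Since $w$ is an involution and $n_0$ is a homomorphism of the abelian group $V_1$, we have $g^{-1}=n_0(-u)w$, so $g^{-1}z_0=n_0(-u)(wz_0)=n_0(-u)(z_0^\ast)$; substituting the explicit formula (\ref{explicitleviisomorphismV}) for $n_0$ and using $Q(-u)=Q(u)$ gives $g^{-1}z_0=z_0^\ast-u-Q(u)z_0$. Projecting onto $V^+$ with $z_0=(e_1+e_b)/2$, $z_0^\ast=e_1-e_b$, $u=u^++u^-$, and $e_b\in V^-$, I get $(g^{-1}z_0)^+=\bigl(1-\tfrac12 Q(u)\bigr)e_1-u^+$ with $u^+\in V_1^+$. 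Because the ambient positive-definite inner product restricts to the standard one on $V^+$ and $e_1\perp V_1^+$, this yields $\Vert (g^{-1}z_0)^+\Vert^2=\bigl(1-\tfrac12 Q(u)\bigr)^2+\Vert u^+\Vert^2=\tau(u)^2$, which is exactly the definition of $\tau$ recalled above (cf. \cite[(2.8.1)]{KO}); in particular $\tau(u)>0$, so the negative powers that occur are well defined.

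For $\bp>2$ I would then feed $(g^{-1}z_0)^+$ into the displayed decomposition of $h_\kappa^\bp$ taking $e_1$ as the distinguished first coordinate: the radial argument of the Gegenbauer polynomial becomes $1-\tfrac12 Q(u)$, the norm becomes $\tau(u)$, and the $\H^{\bp-1}$-factor is evaluated on the $V_1^+$-component, a scalar multiple of $u^+$, which homogeneity of $h_{\kappa^{(1)}}^{\bp-1}$ of degree $k_1$ pulls out. Multiplying by $\Vert (g^{-1}z_0)^+\Vert^{-2k-\bp+2}$ and collecting powers of $\tau(u)$ — note $(k-k_1)-(2k+\bp-2)=-(k+k_1+\bp-2)$ — gives the asserted identity. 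For $\bp=2$ I would instead substitute $x=(g^{-1}z_0)^+=\bigl(1-\tfrac12 Q(u)\bigr)e_1-(u,e_2)e_2$ directly into $h^2_{\pm k}(x)=(x,e_1\pm ie_2)^k$, obtaining $\bigl(1-\tfrac12 Q(u)\mp i(u,e_2)\bigr)^k$, and then use $\tau(u)^2=\bigl(1-\tfrac12 Q(u)\bigr)^2+(u,e_2)^2=\bigl(1-\tfrac12 Q(u)-i(u,e_2)\bigr)\bigl(1-\tfrac12 Q(u)+i(u,e_2)\bigr)$ to rewrite $f(wn_0(u);h^2_{\pm k})$ as a single $(-k)$-th power; the rewriting $1-\tfrac12 Q(u)\mp i(u,e_2)=\bigl(e_2\mp\tfrac{i}{2}u,\,e_2\mp\tfrac{i}{2}u\bigr)$ produces the last form.

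The computation is entirely elementary once $(g^{-1}z_0)^+$ is in hand, so there is no deep obstacle; the part that needs genuine care is the sign bookkeeping. One must track the sign from $w$, the sign from $n_0(-u)$, and the parity of $h_{\kappa^{(1)}}^{\bp-1}$ under $u^+\mapsto -u^+$, and reconcile these with the normalization of the basis $\{h^\bp_\kappa\}$ fixed in Proposition \ref{harmonicbasis} and the transport-of-structure convention of Remark \ref{remarkonbasis}; one must also keep straight throughout that it is the ambient positive-definite norm $\Vert\cdot\Vert$ on $V^+$ — not the indefinite form $Q$ — that enters $\tau(u)$ and the harmonic-polynomial identities. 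The hypothesis $\bm\ge 1$ is used only to ensure that $\langle z_0,z_0^\ast\rangle$ is a genuine hyperbolic plane complementary to $V_1$, so that all of the above makes sense; when $\bm=1$ one has $V_1=V_1^+$, hence $Q(u)=\Vert u^+\Vert^2\ge 0$ and $\tau(u)^2=(1+\tfrac12 Q(u))^2$, which accounts for the stated special form $\tau(u)=1+\tfrac12 Q(u)$.
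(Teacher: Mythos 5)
Your proposal is correct and follows essentially the same route as the paper's proof: both compute $(wn_0(u))^{-1}z_0=n_0(-u)z_0^\ast=z_0^\ast-u-Q(u)z_0$, identify $\Vert((wn_0(u))^{-1}z_0)^+\Vert$ with $\tau(u)$ and the $e_1$-component with $1-\tfrac12 Q(u)$, and then substitute into the explicit Gegenbauer form of $h_\kappa^\bp$ (resp.\ into $h^2_{\pm k}$). You are in fact slightly more explicit than the paper about the sign bookkeeping coming from the $-u^+$ component, which is the only delicate point in either version.
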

\begin{proof}
Since $(wn_0(u))^{-1}z_0=n_0(-u)z_0^\ast=-Q(u)z_0-u+z_0^\ast$,
we see that 
$$
\Vert \bigl((w n_0(u))^{-1}z_0\bigr)^+\Vert=\tau(u)
\ \hbox{ and }\ \bigl((wn_0(u))^{-1}z_0, e_1\bigr)=1-\tfrac12Q(u).
$$
Besides these, we note that $V_1^+=\langle e_2\rangle\subset
V^+=\langle e_1,e_2\rangle$ when $\bp=2$, and 
$$
f(g; h_{\pm k}^2)=\frac{(g^{-1}z_0,e_1\pm ie_2)^k}
{\Vert (g^{-1}z_0)^+\Vert^{2k}}=(g^{-1}z_0, e_1\mp ie_2)^{-k}.
$$
The lemma follows from these identities.
\end{proof}
Note that $\Vert z_0^+\Vert=\Vert e_1/2\Vert=1/2$.
Assume that $\bp>2$ and $\bm\ge 1$.
Then for every $h_\kappa^\bp(x)\in \H^{k,0}(V)$ we define a function 
\begin{equation}
J^\kappa(\lambda)
=\mathrm{char}(Q(\lambda)>0)\Vert \lambda\Vert^{2A}
\frac{h_{\kappa^{(1)}}^{\bp-1}(\lambda^+)}
{\Vert \lambda^+\Vert^{k_1+\frac{\bp-3}{2}}}
K_{k_1+\frac{\bp-3}{2}}(4\pi\Vert \lambda^+\Vert),\quad
\lambda\in V_1,
\end{equation}
on $V_1$, where $\mathrm{char}(Q(\lambda)>0)$ means 
the characteristic function 
of the subset of all $\lambda\in V_1$ with $Q(\lambda)>0$ and 
$A=k+\frac{\bp-\bm}{2}-1$.
We consider its transform
\begin{equation}\label{inversefourier}
I^\kappa(u)=\int_{V_1}\be((\lambda,u))J^\kappa(\lambda)d\lambda,
\quad\ u\in V_1, 
\end{equation}
with the standard Lebesgue measure $d\lambda$ on $V_1\simeq \R^{b-2}$.

On the other hand when $\bp=2$ and $\bm\ge 1$, we define
\begin{align}
J^{\pm k}(\lambda)&=(-1)^k\cdot\mathrm{char}(Q(\lambda)>0)
\Vert \lambda\Vert^{2k-\bm}
\sum_{h=0}^k\begin{pmatrix} k\\ h \end{pmatrix} (-i)^h \\
&\times \sum_{j=0}^{[\frac{k-h}{2}]}
\frac{(k-h)!(\lambda,\pm ie_2)^{k-h-2j}}
{(8\pi)^jj!(k-h-2j)!\Vert\lambda^+\Vert^{k-h-j-\frac12}}
K_{k-h-j-\frac12}(4\pi \Vert \lambda^+\Vert) \notag
\end{align}
for every $\lambda\in V_1=\langle e_2\rangle$.
We have then 
\begin{equation}\label{J2}
J^{\pm k}(\lambda)=2^{k-\frac32}i^k\mathrm{char}\bigl(Q(\lambda)>0,
(\pm\lambda, e_2)<0\bigr)
\Vert \lambda\Vert^{2k-\bm}\mathrm{exp}(4\pi (\pm\lambda, e_2))
\end{equation}
(see the proof of \cite[Theorem 14.2]{Bo}).
We also consider the integral transforms
$I^{\pm k}(u)$ of $J^{\pm k}(\lambda)$
defined by replacing $J^\kappa(\lambda)$ with 
$J^{\pm k}(\lambda)$ in (\ref{inversefourier}).
\begin{lemma}
The integrals $\int_{V_1}|J^\kappa(\lambda)|d\lambda$ 
and $\int_{V_1}|J^{\pm k}(\lambda)|d\lambda$ are finite.
\end{lemma}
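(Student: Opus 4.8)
The plan is to dominate each of the two integrands by an explicit, purely radial majorant and then to reduce the integral over $V_1$ to a one–dimensional integral in $s=\Vert\lambda^+\Vert$ which is controlled by the classical asymptotics of the $K$–Bessel function: $K_\mu(r)$ decays exponentially as $r\to\infty$, while $K_\mu(r)=O(r^{-\mu})$ as $r\to0^+$ for $\mu>0$ and $K_0(r)=O(1+|\log r|)$. Throughout I write $s=\Vert\lambda^+\Vert$ and $t=\Vert\lambda^-\Vert$, so that $\Vert\lambda\Vert^2=s^2+t^2$ and, by the definition of the form on $V_1$, the condition $Q(\lambda)>0$ is equivalent to $s>t$.

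For $\bp>2$ I would first use that $h_{\kappa^{(1)}}^{\bp-1}$ is a homogeneous polynomial of degree $k_1$ on $V_1^+$, whence $|h_{\kappa^{(1)}}^{\bp-1}(\lambda^+)|\le C\,s^{k_1}$ for a constant $C$ depending only on $\kappa$; this gives
\begin{equation*}
|J^\kappa(\lambda)|\ \le\ C\;\mathbf{1}_{\{s>t\}}\,(s^2+t^2)^{A}\,s^{-\frac{\bp-3}{2}}\,\bigl|K_\nu(4\pi s)\bigr|,\qquad A=k+\tfrac{\bp-\bm}{2}-1,\quad\nu=k_1+\tfrac{\bp-3}{2}.
\end{equation*}
Passing to polar coordinates on $V_1^+=\R^{\bp-1}$ and on $V_1^-=\R^{\bm-1}$ (the spheres having finite volume), and using $t<s$ to bound $(s^2+t^2)^A\le 2^{\max(A,0)}s^{2A}$ on the inner range, the $t$–integral contributes $\int_0^s t^{\bm-2}\,dt=O(s^{\bm-1})$ when $\bm\ge2$ and is simply absent when $\bm=1$ (in which case $Q(\lambda)>0$ a.e.). In either case one is left with an estimate of the form
\begin{equation*}
\int_{V_1}|J^\kappa(\lambda)|\,d\lambda\ \le\ C'\int_0^\infty s^{\,m}\,\bigl|K_\nu(4\pi s)\bigr|\,ds,\qquad m=2k+\tfrac{3\bp-7}{2},
\end{equation*}
the exponent being the same whether $\bm=1$ or $\bm\ge2$. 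The integral over $[1,\infty)$ is finite by exponential decay, and near $0$ the integrand is $O(s^{m-\nu})$, or $O\bigl(s^{m}(1+|\log s|)\bigr)$ if $\nu=0$; since $m-\nu+1=2k-k_1+\bp-1\ge k+\bp-1\ge2$ (using $k\ge k_1$ and $\bp\ge3$) the integral over $(0,1]$ converges as well.

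For $\bp=2$ the closed form (\ref{J2}) makes things more transparent: on its support $\{Q(\lambda)>0,\ (\pm\lambda,e_2)<0\}$ one has $|J^{\pm k}(\lambda)|=2^{k-\frac32}\Vert\lambda\Vert^{2k-\bm}e^{-4\pi s}$, where $s=\Vert\lambda^+\Vert=|(\lambda,e_2)|$ because $V_1^+=\langle e_2\rangle$. The same polar reduction — again $t<s$ from $Q(\lambda)>0$, and $s$ ranging over the half–line cut out by $(\pm\lambda,e_2)<0$ — yields
\begin{equation*}
\int_{V_1}|J^{\pm k}(\lambda)|\,d\lambda\ \le\ C\int_0^\infty s^{\,2k-1}e^{-4\pi s}\,ds\ <\ \infty,
\end{equation*}
the last integral converging because $2k-1>-1$ (indeed $k\ge2$). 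The one genuinely delicate point is the behaviour at the origin in the case $\bp>2$: three factors — the power $(s^2+t^2)^A$ (singular when $A<0$), the factor $s^{-\frac{\bp-3}{2}}$, and the Bessel singularity $K_\nu(4\pi s)\asymp s^{-\nu}$ — must be assembled together with the Jacobian weights $s^{\bp-2}t^{\bm-2}$ so that the resulting radial exponent $m-\nu$ exceeds $-1$; the computation above shows it in fact exceeds $1$, so there is ample room, and the borderline logarithmic case $\nu=0$ (i.e. $\bp=3$, $k_1=0$) is harmless. This finiteness is exactly what licenses the termwise manipulations in the computation of the inverse Fourier transform $I^\kappa$ in the next subsection.
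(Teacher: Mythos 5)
Your argument is correct and, unlike the paper, self-contained: the paper disposes of this lemma in one line by deferring to the proof of Lemma 4.5.2 of Pollack \cite{Po}, and your polar-coordinate reduction combined with the standard small- and large-argument asymptotics of $K_\mu$ is precisely the verification being outsourced there. The exponent bookkeeping checks out: the radial exponent $m=2k+\tfrac{3\bp-7}{2}$ comes out the same for $\bm=1$ and $\bm\ge2$, and $m-\nu+1=2k-k_1+\bp-1\ge 2$, so the origin is harmless even in the borderline logarithmic case $\nu=0$, while exponential decay of $K_\nu$ handles infinity. One caveat worth flagging: you read $\Vert\lambda\Vert^2$ for $\lambda\in V_1$ as the Euclidean quantity $s^2+t^2$, whereas the paper's own computation in the proof of Lemma \ref{inversefourier2} treats it as $|(\lambda,\lambda)|=s^2-t^2$ (it writes $(t_+^2-t_-^2)^A$ there). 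Had $A$ been negative, that reading would introduce a non-integrable-looking singularity along the light cone $t=s$ which your majorant does not see; but the standing hypothesis $\nu=k+\tfrac{\bp-\bm}{2}>\tfrac32$ forces $A=\nu-1>0$ (and likewise $2k-\bm>0$ in the $\bp=2$ case), so on the support $s>t$ both readings are dominated by a constant times $s^{2A}$ and your estimate survives unchanged. It would be worth stating this positivity of $A$ explicitly, since it is the one place where the hypothesis on the weight $\nu$ actually enters the convergence argument.
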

\begin{proof}
This can be proved in the same way as in the proof of \cite[Lemma 4.5.2]{Po}.
\end{proof}
According to this lemma, it makes sense to apply the Fourier inversion theorem
to the pair $J^\kappa(\lambda)$ and $I^\kappa(u)$, or $J^{\pm k}(\lambda)$
and $I^{\pm k}(u)$.
So let us compute the value of $I^\kappa(u)$ ($\bp>2$), or $I^{\pm k}(u)$
($\bp=2$).
We recall the hypergeometric series
$$
F_4(a,b;c,d;x,y)=\sum_{m,n=0}^\infty
\frac{(a)_{m+n}(b)_{m+n}}{(c)_{m}(d)_{n}m!n!}x^my^n.
$$
\begin{lemma}\label{inversefourier2}
{\rm (i)} Assume $\bp> 2$ and $\bm>1$. 
Then $I^\kappa(u)$,
$u=u^++u^-\in V_1$, equals
\begin{align*}
&\frac{i^{k_1}\Gamma(k+k_1+\bp-2)
\Gamma\bigl(k+\frac{\bp-\bm}{2}\bigr)}
{2^{k_1+\frac{\bp+\bm}{2}}(2\pi)^{2k+\bp-\frac{\bm+3}{2}}
\Gamma\bigl(k_1+\frac{\bp-1}{2}\bigr)}
     h_{\kappa^{(1)}}^{\bp-1}(u^+)\\
&\times  F_4\left(k+\frac{\bp-1}{2},k+k_1+\bp-2;~ k+\frac{\bp-1}{2},
k_1+\frac{\bp-1}{2};~-\frac{\Vert u^-\Vert^2}{4},
-\frac{\Vert u^+\Vert^2}{4}\right).
\end{align*}

\medskip
\noindent
{\rm (ii)} Assume $\bp>2$ and $\bm=1$.
Then $I^\kappa(u)$, $u=u^+\in V_1^+$, equals
\begin{align*}
&\frac{i^{k_1}\Gamma(k+k_1+\bp-2)
\Gamma\bigl(k+\frac{\bp-1}{2}\bigr)}
{2^{k_1+\frac{\bp+1}{2}}(2\pi)^{2k+\bp-2}\Gamma(k_1+\frac{\bp-1}{2})}
h_{\kappa^{(1)}}^{\bp-1}(u) \\
&\qquad\qquad\times 
{}_2F_1\left(k+k_1+\bp-2, k+\frac{\bp-1}{2};~ k_1+\frac{\bp-1}{2};~
-\frac{\Vert u\Vert^2}{4}\right).
\end{align*}
\end{lemma}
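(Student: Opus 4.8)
The plan is to evaluate $I^\kappa(u)=\int_{V_1}\be((\lambda,u))J^\kappa(\lambda)\,d\lambda$ by integrating out the two variable-groups one at a time with the classical Bochner--Hecke identity (the Fourier transform of a solid spherical harmonic times a radial function), and then to reduce the one-dimensional Bessel integrals that remain to hypergeometric functions via Weber--Schafheitlin type formulas. This is essentially the computation carried out by Pollack \cite[\S 4.5]{Po} and Kobayashi--Mano \cite{KM} in a closely related situation, so much of the work is to adapt their arguments. Concretely, write $\lambda=\lambda^++\lambda^-\in V_1^+\oplus V_1^-\simeq\R^{\bp-1}\oplus\R^{\bm-1}$ and $u=u^++u^-$, so that $(\lambda,u)=(\lambda^+,u^+)-(\lambda^-,u^-)$ for the Euclidean pairings and $d\lambda=d\lambda^+\,d\lambda^-$. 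The factor $h_{\kappa^{(1)}}^{\bp-1}(\lambda^+)$ is a solid harmonic of degree $k_1$ on $\R^{\bp-1}$, the remaining factors of $J^\kappa(\lambda)$ depend on $\lambda^+$ only through $\Vert\lambda^+\Vert$, and the order $k_1+\frac{\bp-3}{2}$ of the $K$-Bessel factor coincides with the Bochner order $\frac{\bp-1}{2}+k_1-1$ of such a harmonic; hence the $\lambda^+$-integral produces $i^{k_1}h_{\kappa^{(1)}}^{\bp-1}(u^+)$ times a Hankel transform of order $k_1+\frac{\bp-3}{2}$ in $\Vert u^+\Vert$, while the subsequent $\lambda^-$-integral, evaluated by Bochner--Hecke with the trivial harmonic on $\R^{\bm-1}$, contributes a Hankel kernel $J_{\frac{\bm-3}{2}}$ in $\Vert u^-\Vert$. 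The preceding lemma on finiteness of $\int_{V_1}|J^\kappa(\lambda)|\,d\lambda$ legitimates all of this.

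For part (ii), where $\bm=1$, there is no $\lambda^-$-integral and $\mathrm{char}(Q(\lambda)>0)$ equals $1$ almost everywhere, so after a single Bochner--Hecke step one is reduced to an integral of the shape $\int_0^\infty r^{s}K_\nu(4\pi r)J_\nu(2\pi\Vert u\Vert r)\,dr$ with $\nu=k_1+\frac{\bp-3}{2}$ and $s$ read off from the power $\Vert\lambda\Vert^{2A}$. This is the Weber--Schafheitlin integral $\int_0^\infty x^{-\mu}K_a(px)J_b(qx)\,dx$, whose value is a quotient of Gamma functions times a ${}_2F_1$ evaluated at $-q^2/p^2$; with $p=4\pi$ and $q=2\pi\Vert u\Vert$ the argument becomes $-\Vert u\Vert^2/4$, and a short parameter computation identifies the numerator/denominator indices as $k+k_1+\bp-2$, $k+\frac{\bp-1}{2}$, and $k_1+\frac{\bp-1}{2}$. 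Multiplying the Weber--Schafheitlin Gamma factors by the Bochner normalization constant then gives the stated prefactor.

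For part (i), where $\bm>1$, the genuinely new difficulty is that $\lambda^+$ and $\lambda^-$ are coupled, both through the factor $\Vert\lambda\Vert^{2A}$ and through the truncation $Q(\lambda)>0$, i.e.\ $\Vert\lambda^-\Vert<\Vert\lambda^+\Vert$. After the $\lambda^+$-Bochner step I would introduce the polar radii $\rho=\Vert\lambda^+\Vert$ and $\sigma=\Vert\lambda^-\Vert$, carry out the $\sigma$-integral over $0<\sigma<\rho$, and then the $\rho$-integral. The cleanest route is to expand $J_{k_1+\frac{\bp-3}{2}}(2\pi\Vert u^+\Vert\rho)$ in its power series and integrate term by term; each term reduces to a Weber--Schafheitlin integral exactly as in part (ii), and the double sum over the two series indices reassembles into the defining double series of $F_4\bigl(k+\tfrac{\bp-1}{2},\,k+k_1+\bp-2;\;k+\tfrac{\bp-1}{2},\,k_1+\tfrac{\bp-1}{2};\;-\tfrac{\Vert u^-\Vert^2}{4},\,-\tfrac{\Vert u^+\Vert^2}{4}\bigr)$. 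I expect the main obstacle to be precisely the truncation $\sigma<\rho$: one has to verify that with the specific exponent $2A=2k+\bp-\bm-2$ the inner truncated integral still collapses to a closed form, recognizable as an incomplete Beta integral that merges cleanly with the $\rho$-integration, and this is the point where I would follow the explicit computations of \cite[\S 4.5]{Po} and \cite{KM} most literally.

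Finally I would use the special case $u^-=0$ as an internal consistency check: since $F_4(a,b;c,d;0,y)={}_2F_1(a,b;d;y)$ and here $c=a=k+\frac{\bp-1}{2}$, and since $\Gamma(k+\frac{\bp-\bm}{2})$, $2^{k_1+\frac{\bp+\bm}{2}}$ and $(2\pi)^{2k+\bp-\frac{\bm+3}{2}}$ specialize at $\bm=1$ respectively to $\Gamma(k+\frac{\bp-1}{2})$, $2^{k_1+\frac{\bp+1}{2}}$ and $(2\pi)^{2k+\bp-2}$, formula (i) collapses exactly to formula (ii). This agreement, together with the known values of the Weber--Schafheitlin and Bochner constants, pins down all the numerical factors.
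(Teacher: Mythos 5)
Your proposal follows essentially the same route as the paper's proof: the angular integrals are performed by the Bochner--Hecke/spherical Fourier identity for solid harmonics (the paper quotes it from \cite{KM}), and the surviving radial integrals are evaluated by tabulated Bessel-integral formulas --- for (ii) exactly the Weber--Schafheitlin integral $\int_0^\infty x^{-\lambda}K_\mu(ax)J_\nu(bx)\,dx$ from \cite[(6.576.3)]{GR} that you describe. For the truncation $\Vert\lambda^-\Vert<\Vert\lambda^+\Vert$ in (i), which you rightly single out as the delicate point, the paper needs no series expansion: substituting $t_-=st_+$ turns the inner integral into Sonine's finite integral $\int_0^1 s^{\nu+1}(1-s^2)^{A}J_\nu(bs)\,ds=2^{A}\Gamma(A+1)b^{-A-1}J_{\nu+A+1}(b)$ from \cite[(6.567.1)]{GR}, collapsing it to a single $J$-Bessel of shifted order $A+\frac{\bm-1}{2}$, after which the remaining integral $\int_0^\infty t^{\rho-1}J_\lambda(at)J_\mu(bt)K_\nu(ct)\,dt$ is read off from \cite[(6.578.2)]{GR}, whose value is the Appell $F_4$ directly.
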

\begin{proof}
We follow the idea in the proof of \cite[Proposition 4.5.3]{Po}.
Let $S(V_1^\pm)$ denote the unit sphere in $V_1^\pm$.
Firstly we treat the case that $\bm>1$, thus $V_1^-\neq\{0\}$.
We write $\lambda=\tp\sigmap+\tm\sigmam$ with $t_\pm>0$ and $\sigma_\pm\in 
S(V_1^\pm)$.
Then $I^\kappa(u)$ is equal to
\begin{align*}
\int_{\tp>\tm>0} \int_\sigmap \int_\sigmam&
\be(\tp(\sigmap, u^+))\be(\tm(\sigmam, u^-))
(t_+^2-t_-^2)^A t_+^{-\frac{\bp-3}{2}}\\
&\times h_{\kappa^{(1)}}^{b^+-1}(\sigmap)
     K_{k_1+\frac{\bp-3}{2}}(4\pi \tp) t_+^{\bp-2} t_-^{\bm-2}
      d\tp d\tm d\sigmap d\sigmam.
\end{align*}
We recall the formula \cite[p.55 (3,3,4)]{KM}
\begin{align}\label{sphericalfourier}
\int_{S^{m-1}}e^{i t(u,\sigma)}\phi(\sigma)d\sigma
=(2\pi)^{\frac{m}{2}}i^{l}
\phi(u)t^{1-\frac{m}{2}}J_{\frac{m}{2}-1+l}(t),\quad u\in S^{m-1},
\end{align}
where $\phi$ is harmonic polynomial of degree $l$ and
$J_m$ denote the $J$-Bessel function.
Using this identity and putting 
$\tm=s\tp$ with $0<s<1$, we can express $I^\kappa(u)$ as
\begin{align*}
&\frac{(2\pi)^2 i^{k_1}h_{\kappa^{(1)}}^{\bp-1}(u^+/\Vert u^+\Vert)}
{\Vert u^+ \Vert^{\frac{\bp-3}{2}}\Vert u^- \Vert^{\frac{\bm-3}{2}}}
\int_0^\infty t_+^{2A+\frac{\bm+3}{2}}
J_{k_1+\frac{\bp-3}{2}}(2\pi\Vert u^+\Vert \tp)
      K_{k_1+\frac{\bp-3}{2}}(4\pi \tp)\\
&\qquad\qquad\times \int_0^1 
s^{\frac{\bm-1}{2}}(1-s^2)^A 
J_{\frac{\bm-3}{2}}(2\pi\Vert u^-\Vert \tp s)
dsd\tp.
\end{align*}
We then apply \cite[(6.567.1)]{GR}:
\begin{align*}
&\int_0^1 s^{\nu+1}(1-s^2)^\mu J_\nu(bs)ds=2^\mu \Gamma(\mu+1)
b^{-(\mu+1)}J_{\nu+\mu+1}(b)\\&\hspace{4cm}
(b>0,\ 
\mathrm{Re}(\nu)>-1,\ \mathrm{Re}(\mu)>-1)
\end{align*}
to compute the inner integral in $s$.
It yields that
\begin{align*}
I^\kappa(u)&=\frac{2^A(2\pi)^{1-A}i^{k_1}\Gamma(A+1)}
{\Vert u^+\Vert^{\frac{\bp-3}{2}}
\Vert u^-\Vert^{A+\frac{\bm-1}{2}}}
h_{\kappa^{(1)}}^{\bp-1}\left(\frac{u^+}{\Vert u^+\Vert}\right)\\
&\times\int_0^\infty t_+^{A+\frac{\bm+1}{2}}
J_{A+\frac{\bm-1}{2}}(2\pi\Vert u^-\Vert \tp)
J_{k_1+\frac{\bp-3}{2}}(2\pi\Vert u^+\Vert \tp)
K_{k_1+\frac{\bp-3}{2}}(4\pi \tp)d\tp.
\end{align*}
Finally, the identity \cite[(6.578.2)]{GR}
\begin{align*}
&\int_0^\infty t^{\rho-1}J_\lambda(at)J_\mu(bt)K_\nu(ct)dt\\
=&~\frac{2^{\rho-2}a^\lambda b^\mu c^{-\rho-\lambda-\mu}}
{\Gamma(\lambda+1)\Gamma(\mu+1)}
\Gamma\left(\frac{\rho+\lambda+\mu-\nu}{2}\right)
\Gamma\left(\frac{\rho+\lambda+\mu+\nu}{2}\right)\\
&\qquad\times F_4\left(\frac{\rho+\lambda+\mu-\nu}{2},
\frac{\rho+\lambda+\mu+\nu}{2};
\lambda+1,\mu+1;-\frac{a^2}{c^2}, -\frac{b^2}{c^2}\right)\\
&\hspace{4cm}(\mathrm{Re}(\rho+\lambda+\mu)>|\mathrm{Re}(\nu)|,\ 
\mathrm{Re}(c)>|\mathrm{Im}(a)|+|\mathrm{Im}(b)|)
\end{align*}
concludes the first assertion.

On the other hand when $\bm=1$, hence $V_1^-=\{0\}$, 
we see that
\begin{align*}
I^\kappa(u)=\int_{t>0}t^{2A-\frac{\bp-3}{2}}
K_{k_1+\frac{\bp-3}{2}}(4\pi t)
\int_{\sigma\in S(V_1)}
\be(t(\sigma,u))h_{\kappa^{(1)}}^{\bp-1}(\sigma)
t^{\bp-2}d\sigma dt.
\end{align*}
Again by (\ref{sphericalfourier}) we get 
\begin{align*}
I^\kappa(u)=\frac{2\pi i^{k_1}}{\Vert u\Vert^{\frac{\bp-3}{2}}}
h_{\kappa^{(1)}}^{\bp-1}\left(\frac{u}{\Vert u\Vert}\right) 
\int_0^\infty t^{2A+1}J_{k_1+\frac{\bp-3}{2}}(2\pi\Vert u\Vert t)
K_{k_1+\frac{\bp-3}{2}}(4\pi t)dt.
\end{align*}
Then the formula \cite[(6.576.3)]{GR}:
\begin{align*}
\int_0^\infty x^{-\lambda}K_\mu(ax)&J_\nu(bx)dx
=\frac{b^\nu\Gamma\left(\frac{\nu-\lambda+\mu+1}{2}\right)
\Gamma\left(\frac{\nu-\lambda-\mu+1}{2}\right)}
{2^{\lambda+1}a^{\nu-\lambda+1}\Gamma(\nu+1)}\\
&\times {}_2F_1\left(\frac{\nu-\lambda+\mu+1}{2},
\frac{\nu-\lambda-\mu+1}{2}; \nu+1; -\frac{b^2}{a^2}\right)\\
&\left(\mathrm{Re}(a\pm ib)>0,\ \mathrm{Re}(\nu-\lambda+1)>
|\mathrm{Re}(\mu)|\right),
\end{align*}
concludes the second assertion.
\end{proof}
\begin{proposition}\label{inversetransform}
Assume $\bm\ge 1$.
Then $I^\kappa(u)$ $(\bp>2)$, or $I^{\pm k}(u)$ $(\bp=2)$,
equals
$$
\frac{i^{k_1}\Gamma(k-k_1+1)\Gamma(2k_1+\bp-2)
\Gamma\bigl(k+\frac{\bp-\bm}{2}\bigr)}
{2^{k_1+\frac{\bp+\bm}{2}}(2\pi)^{2k+\bp-\frac{\bm+3}{2}}
\Gamma\bigl(k_1+\frac{\bp-1}{2}\bigr)}f(wn_0(u); h_\kappa^\bp),
$$
where $k=k_1$ understood in the case that $\bp=2$.
\end{proposition}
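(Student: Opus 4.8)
The plan is to combine the two explicit computations already available: on one side Lemma~\ref{inversefourier2}, which evaluates $I^\kappa(u)$ for $\bp>2$ in terms of the Appell function $F_4$ (or of a ${}_2F_1$ when $\bm=1$), together with the exponential expression (\ref{J2}) for $J^{\pm k}(\lambda)$, which will give $I^{\pm k}(u)$ for $\bp=2$; on the other side Lemma~\ref{besselsection}, which writes $f(wn_0(u);h_\kappa^\bp)$ as a Gegenbauer polynomial in $\bigl(1-\tfrac12Q(u)\bigr)/\tau(u)$ (or as an elementary rational power when $\bp=2$). Once both sides are written out, the proposition reduces to a special-function identity plus a bookkeeping of the $\Gamma$-factor, which I would treat in the three cases $\bp=2$; $\bp>2$, $\bm=1$; $\bp>2$, $\bm>1$.

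For $\bp>2$ and $\bm=1$ I would compare Lemma~\ref{inversefourier2}(ii) with Lemma~\ref{besselsection}. Writing $n=k-k_1$, $\lambda=k_1+\tfrac{\bp-2}{2}$, $\tau(u)=1+\tfrac12Q(u)=1+\tfrac14\|u\|^2$, and using the Gauss representation $C_n^\lambda(\xi)=\tfrac{(2\lambda)_n}{n!}\,{}_2F_1\!\bigl(-n,n+2\lambda;\lambda+\tfrac12;\tfrac{1-\xi}{2}\bigr)$ with $\xi=\bigl(1-\tfrac12Q(u)\bigr)/\tau(u)$, so that $\tfrac{1-\xi}{2}=\tfrac{t}{1+t}$ with $t=\tfrac14\|u\|^2$, a single Pfaff transformation ${}_2F_1(a,b;c;z)=(1-z)^{-b}\,{}_2F_1\!\bigl(c-a,b;c;\tfrac{z}{z-1}\bigr)$ turns $\tau(u)^{-(k+k_1+\bp-2)}C_{k-k_1}^{k_1+\frac{\bp-2}{2}}(\xi)$ into $\tfrac{(2k_1+\bp-2)_{k-k_1}}{(k-k_1)!}\,{}_2F_1\!\bigl(k+k_1+\bp-2,\,k+\tfrac{\bp-1}{2};\,k_1+\tfrac{\bp-1}{2};-\tfrac14\|u\|^2\bigr)$, which is exactly the ${}_2F_1$ of Lemma~\ref{inversefourier2}(ii). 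The claimed constant then falls out after rewriting $(2k_1+\bp-2)_{k-k_1}=\Gamma(k+k_1+\bp-2)/\Gamma(2k_1+\bp-2)$.

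For $\bp>2$ and $\bm>1$ the Appell function of Lemma~\ref{inversefourier2}(i) has parameters $a=c=k+\tfrac{\bp-1}{2}$, $b=k+k_1+\bp-2$, $c'=k_1+\tfrac{\bp-1}{2}$, so that $c=a$ and $c+c'=b+1$, and moreover $b=n+2\lambda$ with $n=k-k_1$, $\lambda=k_1+\tfrac{\bp-2}{2}$; its arguments are $-\tfrac14\|u^-\|^2$ and $-\tfrac14\|u^+\|^2$. On this parameter locus I would invoke the classical reduction of $F_4$ to a single Gegenbauer polynomial, namely (with $P=1-\tfrac12Q(u)$ and $\tau(u)^2=P^2+\|u^+\|^2=(1+\tfrac12Q(u))^2+\|u^-\|^2$)
\[
F_4\bigl(a,b;a,c';-\tfrac14\|u^-\|^2,-\tfrac14\|u^+\|^2\bigr)
=\frac{(k-k_1)!}{(2k_1+\bp-2)_{k-k_1}}\,\tau(u)^{-(k+k_1+\bp-2)}
C_{k-k_1}^{k_1+\frac{\bp-2}{2}}\!\Bigl(\frac{1-\tfrac12Q(u)}{\tau(u)}\Bigr),
\]
whose normalization is checked at $u=0$ from $C_n^\lambda(1)=(2\lambda)_n/n!$. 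Substituting into Lemma~\ref{inversefourier2}(i) and using Lemma~\ref{besselsection} then gives the asserted formula, with the factor $\Gamma\!\bigl(k+\tfrac{\bp-\bm}{2}\bigr)$ now genuinely present and the remaining constant handled as in the $\bm=1$ case.

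Finally, for $\bp=2$ (where $k=k_1$) I would substitute the exponential expression (\ref{J2}) for $J^{\pm k}(\lambda)$ into the definition of $I^{\pm k}(u)$ and evaluate the resulting Fourier transform over the forward light cone $\{Q(\lambda)>0,\ (\pm\lambda,e_2)<0\}$ of $\|\lambda\|^{2k-\bm}e^{4\pi(\pm\lambda,e_2)}$; this is an elementary integral, essentially the one carried out in the proof of \cite[Theorem 14.2]{Bo}, and it produces a constant times $\bigl(1-\tfrac12Q(u)\mp i(u,e_2)\bigr)^{-k}$, which is $f(wn_0(u);h_{\pm k}^2)$ by Lemma~\ref{besselsection}. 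I expect the main obstacle to be the $\bm>1$ case: pinning down (or re-deriving from the double series) the exact $F_4$ reduction displayed above on the locus $c=a$, $c+c'=b+1$, and then propagating the half-integer shifts $\tfrac{\bp-1}{2}$ and $\tfrac{\bp-\bm}{2}$ through the duplication and reflection formulas so that the normalizing constant comes out precisely as stated; the $\bm=1$ and $\bp=2$ cases are comparatively routine.
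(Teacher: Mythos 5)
Your proposal is correct and follows essentially the same route as the paper: Pfaff transformation plus the Gauss/quadratic-argument Gegenbauer representations for $\bp>2$, an $F_4$ reduction on the locus $a=c$, $c+c'=b+1$ for $\bm>1$, and direct evaluation of the light-cone integral via (\ref{J2}) for $\bp=2$. The one identity you flag as needing justification is exactly the composite of \cite[Lemma 5.7]{KO} (which reduces that $F_4$ to a ${}_2F_1$ in $\Vert u^+\Vert^2/\tau(u)^2$) with the representation $C_n^\lambda(x)=\frac{\Gamma(n+2\lambda)}{\Gamma(n+1)\Gamma(2\lambda)}\,{}_2F_1\bigl(-\tfrac{n}{2},\tfrac{n}{2}+\lambda;\lambda+\tfrac12;1-x^2\bigr)$, which is the reference the paper invokes at that step.
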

\begin{proof}
When $\bp>2$ and $\bp=1$, we apply \cite[Vol.I, p.64, (22)]{Er}:
\begin{align*}
{}_2 F_1(a,b;c;z)=(1-z)^{-a}{}_2F_1\left(a, c-b ; c;\frac{z}{z-1}\right)
\end{align*}
to rewrite the expression of $I^\kappa(u)$ in Lemma \ref{inversefourier2} (ii).
Similarly when $\bp>2$ and $\bm>1$, we 
apply \cite[Lemma 5.7]{KO}:
\begin{align*}
&F_4\left(\frac{p-1}{2},\frac{p+q-4}{2};\frac{p-1}{2},\frac{q-1}{2};
-\frac{|z'|^2}{4},-\frac{|z''|^2}{4}\right)\\
&=\tau(z',z'')^{-\frac{p+q-4}{2}}{}_2F_1\left(\frac{q-p}{4},\frac{p+q-4}{4},
\frac{q-1}{2};\frac{|z''|^2}{\tau(z',z'')^{2}}\right)
\end{align*}
to rewrite the expression of $I^\kappa(u)$ in Lemma \ref{inversefourier2} (i).
Then, in the case where $\bp>2$ and $\bm\ge 1$,
the assertion is obtained from the above rewrite with additional use of
\begin{align*}
C_n^\lambda(x)
&=\frac{\Gamma(n+2\lambda)}{\Gamma(n+1)\Gamma(2\lambda)}
{}_2F_1\left(-n, n+2\lambda;\lambda+\frac{1}{2};\frac{1}{2}-\frac{x}{2}\right)\\
&=\frac{\Gamma(n+2\lambda)}{\Gamma(n+1)\Gamma(2\lambda)}
{}_2F_1\left(-\frac{n}{2}, \frac{n}{2}+\lambda; \lambda+\frac12; 
1-x^2\right)
\end{align*}
(see \cite[Vol.I, p.175, 3.15.1 (3)]{Er} and \cite[Vol.I, p.111, 2.11 (2)]{Er})
and Lemma \ref{besselsection}.

For the remaining case that $\bp=2$ and $\bm\ge 1$ we use
the expression (\ref{J2}) of $J^{\pm k}(\lambda)$ and 
see that 
$$
\int_{\substack{\lambda\in V_1, Q(\lambda)>0,\\
(\pm\lambda,e_2)<0}}
\Vert \lambda\Vert^{2k-\bm}\exp\bigl((\pm\lambda,ye_2)\bigr)d\lambda
=2^{2k-1}\pi^{\frac{\bm}{2}-1}
\Gamma(k)\Gamma\left(k-\frac{\bm}{2}+1\right)(ye_2, ye_2)^{-k}
$$
\cite[\S 5, Hilfssatz 1]{Si} for $y>0$.
This yields that
$$
I^{\pm k}(u)=
\frac{i^k\Gamma(k)\Gamma(k-\frac{\bm}{2}+1)}
{2^{k+\frac52}\pi^{2k-\frac{\bm}{2}+1}}
\left(e_2\mp i\frac{u}{2}, e_2\mp i\frac{u}{2}\right)^{-k},\quad u\in V_1,
$$
\cite[Section 1, (1.22)]{Sh82}, 
which completes the proof of the proposition.
\end{proof}
\begin{corollary}\label{exactcharacterization} 
The function (\ref{archimedeancoefficient})
equals up to a constant multiple the value of a 
Bessel integral (\ref{besselintegral}) 
attached to the minimal $\OO(V^+)\times\OO(V^-)$-type
$\H^k(V^+)\boxtimes\mathbf{1}$ of the irreducible submodule
$\Pi^{\bp,\bm}_{k,0}$ of $I_{\varepsilon}(k+\bp-2)$.
\end{corollary}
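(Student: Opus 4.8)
The plan is to deduce Corollary \ref{exactcharacterization} from Proposition \ref{inversetransform} by combining it with the Fourier inversion theorem and with the left equivariance of the Bessel integral (\ref{besselintegral}) under the maximal parabolic $P=MN$ of $\OO(V)$. Since we may replace the isometry $v_0$ by any other one (cf.\ Section 6.2), I would first arrange that $v_0(z)=z_0$, so that $P_z\subset\OO(L_\R)$ is identified with the stabilizer $P$ of $\langle z_0\rangle$, the elements $m_z(a,g_1)$ and $n_z(u)$ with $m_0(a,g_1)$ and $n_0(u)$, and $\Vert v_0^+(z)\Vert=\Vert z_0^+\Vert=\tfrac12$; under this identification $W_\lambda^\kappa(g_z)$, $g_z=n_z(u)m_z(a,g_1)$, becomes an explicit function of $(u,a,g_1)$. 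On the other side, for $h\in\H^k(V^+)$ the section $f(\cdot\,;h)$ of (\ref{basisofminimalK}) lies in the minimal $\OO(V^+)\times\OO(V^-)$-type of $\Pi^{\bp,\bm}_{k,0}$ by Proposition \ref{cohomologicalrep}, so (\ref{besselintegral}) is genuinely a Bessel integral attached to $\Pi^{\bp,\bm}_{k,0}$.

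Next I would evaluate $W_\lambda(p\,;h_\kappa^\bp)$ for $p=n_0(x)m_0(a,g_1)\in P$ using that $f(\cdot\,;h_\kappa^\bp)\in I_\varepsilon(k+\bp-2)$. Left translating the integration variable by $n_0(x)$ produces the character $\be((\lambda,x))$; left translating by $m_0(a,g_1)$, after moving it past $w$ and past $n_0$ via the commutation relations (\ref{explicitleviisomorphismV})--(\ref{explicitleviisomorphismVm}) and changing variables in the integral over $V_1$, produces the scalar $(a/|a|)^k|a|^{\bm-k}$ and replaces $\lambda$ by $ag_1^{-1}\lambda$. Hence $W_\lambda(p\,;h_\kappa^\bp)=\be((\lambda,x))\,(a/|a|)^k|a|^{\bm-k}\,W_{ag_1^{-1}\lambda}(1\,;h_\kappa^\bp)$, where $W_\mu(1\,;h_\kappa^\bp)=\int_{V_1}\be(-(\mu,u))f(wn_0(u)\,;h_\kappa^\bp)\,du$ is the Fourier transform of the function displayed in Lemma \ref{besselsection}.

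Now comes the crucial step. Proposition \ref{inversetransform} states that the inverse Fourier transform $I^\kappa$ of $J^\kappa$ is a constant multiple of $f(wn_0(\cdot)\,;h_\kappa^\bp)$, and $J^\kappa\in L^1(V_1)$ by the integrability lemma above; therefore Fourier inversion yields $W_\mu(1\,;h_\kappa^\bp)=\mathrm{(const)}\cdot J^\kappa(\mu)$. Substituting the explicit shape of $J^\kappa$ --- namely $\Vert\mu\Vert^{2A}\,\mathrm{char}(Q(\mu)>0)$ with $A=k+\tfrac{\bp-\bm}{2}-1$, times $\Vert\mu^+\Vert^{-k_1-\frac{\bp-3}{2}}h_{\kappa^{(1)}}^{\bp-1}(\mu^+)\,K_{k_1+\frac{\bp-3}{2}}(4\pi\Vert\mu^+\Vert)$ --- with $\mu=ag_1^{-1}\lambda$, and collecting powers of $|a|$ and sign characters, one checks that the exponent $\bm-k$ together with those produced by $J^\kappa$ adds up precisely to $k+\tfrac{\bp-1}{2}$ and that the sign character collapses to $(a/|a|)^{k-k_1}$; since $\Vert v_0^+(z)\Vert=\tfrac12$ under our choice of $v_0$, this is exactly the $(u,a,g_1)$-dependence of $W_\lambda^\kappa(g_z)$ in (\ref{archimedeancoefficient}). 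The only residual difference is the purely $\lambda$-dependent factor $\mathrm{(const)}\cdot\Vert\lambda\Vert^{2A}\,\mathrm{char}(Q(\lambda)>0)$, which is the asserted constant multiple. For $\bp=2$ the argument is identical, with $J^{\pm k}$, its closed form (\ref{J2}), the second formula of Lemma \ref{besselsection}, and Theorem \ref{fourierexpansion=2} replacing their $\bp>2$ analogues.

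The main obstacle is the bookkeeping in the third paragraph: verifying that the homogeneity degree $2A$ of $J^\kappa$, the factor $|a|^{\bm-k}$ coming from the parabolic, and the signs $(a/|a|)^{k}$ and $(a/|a|)^{k_1}$ conspire to reproduce exactly the normalization built into (\ref{archimedeancoefficient}), and that the auxiliary isometry really converts $\Vert v_0^+(z)\Vert$, $v_1^+(g_1^{-1}\lambda)$ and the unipotent coordinate into the variables appearing in (\ref{besselintegral}). This is routine but easy to get wrong; the conceptual content has already been extracted in Proposition \ref{inversetransform}.
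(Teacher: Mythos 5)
Your proposal is correct and follows essentially the same route as the paper: establish the left $P$-equivariance of the Bessel integral (the paper records it as $W_\lambda(g_z;h_\kappa^\bp)=\be((\lambda,u))\chi^\pm_{k+\bp-2}(m_0(a^{-1},g_1))|a|^{b-2}W_{a\cdot g_1^{-1}\lambda}(1_b;h_\kappa^\bp)$, which agrees with your $(a/|a|)^k|a|^{\bm-k}$ factor), then invoke Proposition \ref{inversetransform} together with Fourier inversion to identify $W_\mu(1_b;h_\kappa^\bp)$ with a constant multiple of $J^\kappa(\mu)$, and finally match normalizations with (\ref{archimedeancoefficient}). Your exponent and sign bookkeeping ($2A-\tfrac{\bp-3}{2}+\bm-k=k+\tfrac{\bp-1}{2}$ and $(\mathrm{sgn}\,a)^{k+k_1}=(a/|a|)^{k-k_1}$) checks out, and your explicit identification of the residual $\lambda$-dependent factor is if anything more careful than the paper's one-line conclusion.
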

\begin{proof} By definition we can check that
$$
W_\lambda(g_z; h_\kappa^\bp)=\be((\lambda,u))
\psi^\pm_{k+\bp-2}\bigl(m_0(a^{-1}, g_1)\bigr)\vert a\vert^{b-2}
W_{a\cdot g_1^{-1}\lambda}(1_b; h_\kappa^\bp)
$$
for every $g_z=n_0(u)m_0(a,g_1)\in P_z$ with
the unit element $1_b=1_{\bp+\bm}\in \OO(V)$.
However, Proposition \ref{inversetransform} and the Fourier inversion
theorem imply that $W_{a\cdot g_1^{-1}\lambda}(1_b; h_\kappa^\bp)$ 
equals $J^\kappa(a\cdot g_1^{-1}\lambda)$ 
up to a constant multiple.
Thus $W_\lambda(g_z; h_\kappa^\bp)$ equals a multiple of
(\ref{archimedeancoefficient}), which completes the proof.
\end{proof}

\subsection{Bessel integrals on the Levi subgroup $M$}
Take $z_0$ and $z_0^\ast\in V$ as in (\ref{standardisotropic}) and
also take the corresponding maximal $\Q$-parabolic subgroup $P=MN$ of $\OO(V)$
with the isomorphisms $n_0: V_1\simeq N$ and 
$m_0: \GL(\langle z_0\rangle)\times \OO(V_1)\simeq M$,
(\ref{explicitleviisomorphismV}) and (\ref{explicitleviisomorphismVm}).
Recall the decomposition $V=V_1\oplus\langle z_0, z_0^\ast\rangle$.
In this subsection, we assume that $\bp>2$ and $\bm\ge 2$ for $V$,
and hence $V_1$ is indefinite.
Let then $z_1$ and $z_1^*\in V_1$ be isotropic vectors given by
\begin{align*}
z_1=\frac12(e_2+e_{b-1}),\quad z_1^*=e_2-e_{b-1},
\end{align*}
which provide the decompositions
$$
V_1=\langle z_1\rangle
\oplus V_2\oplus\langle z_1^*\rangle\hbox{ and }
V=\langle z_0\rangle\oplus\langle z_1\rangle
\oplus V_2\oplus\langle z_1^*\rangle\oplus\langle z_0^*\rangle.
$$
Let $Q_1$ be the maximal $\Q$-parabolic subgroup of $\OO(V_1)$
stabilizing the line $\langle z_1\rangle$.
Its unipotent radical $N_1$ is isomorphic to $V_2$.
We write the isomorphism as $n_1: V_2\simeq N_1$, 
which is defined in the same manner as in (\ref{explicitleviisomorphismV}).

Let $w_{01}\in \OO(V)$ be the element that exchanges $z_0$ and $z_1$, 
$z_0^*$ and $z_1^*$, and also $w_{01}\vert_{V_2}
=\mathrm{id}_{V_2}$.
Besides it, let $w_1\in \OO(V_1)$ be 
the element that exchanges $z_1$ and $z_1^\ast$ and 
$w_1\vert_{V_2}=\mathrm{id}_{V_2}$.
Now for any $\lambda_2\in V_2$, $h(x)\in \H^{k,0}(V)$, and $g\in \mathrm{O}(V)$,
we define
\begin{align}\label{degeneratebesselintegral}
W_{\lambda_2}^{dg}(g; h)=\int_{V_2}\int_{\R}\be(-(\lambda,u))
f(w_{01}n_0(te_{b-1})m_0(1, w_1n_1(u))g; h)dtdu.
\end{align}
\begin{proposition}\label{lowerrankbessel}
Suppose $\bp>2$ and $\bm\ge 2$.
For every $\lambda_2\in V_2$ satisfying $Q(\lambda_2)>0$ and
$h_\kappa^\bp(x)\in \H^{k,0}(V)$, one has the following identities.

\medskip
\noindent
{\rm (i)} If $k>k_1$ for $\kappa$, then 
$W^{dg}_{\lambda_2}(1_b; h_\kappa^\bp)=0$.

\medskip
\noindent
{\rm (ii)} If $k_1=k$ for $\kappa$, then 
$$
W^{dg}_{\lambda_2}(1_b; h_\kappa^\bp)=
\frac{2\sqrt{\pi}\Gamma(k+\frac{\bp-3}{2})}{\Gamma(k+\frac{\bp-2}{2})}
\int_{V_2}\be(-(\lambda_2,u))
f_1(w_1n_1(u); h_{\kappa^{(1)}}^{\bp-1})du,
$$
where 
$$
f_1(g_1; h_{\kappa^{(1)}}^{\bp-1})=h_{\kappa^{(1)}}^{\bp-1}
((g_1^{-1}z_1)^+)\Vert (g_1^{-1}z_1)^+\Vert^{-2k-\bp+3},\quad
g_1\in \OO(V_1)\quad  (k=k_1).
$$
Thus, up to a constant multiple,
$W^{dg}_{\lambda_2}(1_b; h_\kappa^\bp)$ equals the value
of Bessel integral on $\OO(V_1)$ of type (\ref{besselintegral})
at the unit element $1_{b-2}$.
\end{proposition}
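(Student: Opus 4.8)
The plan is to reduce the degenerate Bessel integral (\ref{degeneratebesselintegral}) on $\OO(V)$ to the non-degenerate Bessel integral on the smaller group $\OO(V_1)$ by carrying out the $t$-integration explicitly and recognizing the resulting $u$-integral as the Bessel integral attached to the minimal $K$-type $\H^{k,0}(V_1)$. First I would compute the section value $f(w_{01}n_0(te_{b-1})m_0(1,w_1n_1(u))1_b; h_\kappa^\bp)$ using the basis description (\ref{basisofminimalK}), namely $f(g; h)=h((g^{-1}z_0)^+)\Vert(g^{-1}z_0)^+\Vert^{-2k-\bp+2}$. Writing $g = w_{01}n_0(te_{b-1})m_0(1,w_1n_1(u))$, one tracks $g^{-1}z_0$: the factor $w_{01}$ sends $z_0$ to $z_1$, so $g^{-1}z_0$ becomes, after applying $n_0(-te_{b-1})$ and $m_0(1, n_1(-u)w_1)$, an explicit vector in $V$ whose $V^+$-component and its norm can be written in terms of $t$, the harmonic $h_{\kappa^{(1)}}^{\bp-1}((\text{something})^+)$, and the quantity $\Vert(w_1n_1(u)^{-1}z_1)^+\Vert$. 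The key point, as in Lemma \ref{besselsection}, is that $e_{b-1}\in V^-$, so the contribution of the $n_0(te_{b-1})$-translation enters the $V^+$-norm only through a term $(\cdots)^2 + t^2\cdot(\text{const})$ inside a power, while the numerator $h_\kappa^\bp$ picks up a Gegenbauer polynomial $C_{k-k_1}^{k_1+\frac{\bp-2}{2}}$ of the ratio. This is exactly the structure of Lemma \ref{besselsection} with the roles of $(z_0,z_0^\ast)$ replaced by $(z_1,z_1^\ast)$ inside $V_1$, together with an extra hyperbolic direction $e_{b-1}$.

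Having this explicit form, the next step is to integrate over $t\in\R$. The $t$-dependence is concentrated in a single factor of the shape $(A + t^2)^{-(k+\frac{\bp-2}{2})}$ (when $k=k_1$, so the Gegenbauer polynomial is constant) times a $t$-independent piece, while when $k>k_1$ the numerator also contains an odd-or-mixed power of $t$ coming from the Gegenbauer polynomial evaluated at the argument whose linear-in-$t$ piece integrates to zero against the even kernel — more precisely, the $t$-odd parts integrate to $0$ and the surviving $t$-even parts, after using the classical Beta-integral
$$
\int_\R (A+t^2)^{-s}\,dt = A^{\frac12-s}\,\frac{\sqrt{\pi}\,\Gamma(s-\tfrac12)}{\Gamma(s)},
$$
either vanish (case $k>k_1$, because the Gegenbauer polynomial $C_{k-k_1}^{k_1+\frac{\bp-2}{2}}$ of a purely linear-in-$t$ argument has no $t$-free even part after the $t$-integration kills the odd part and the residual combination cancels — this needs the defining sum (\ref{hpoperator})/(\ref{onestepprojection}) and a short parity bookkeeping) or, in case $k=k_1$, produce exactly the constant $2\sqrt{\pi}\,\Gamma(k+\frac{\bp-3}{2})/\Gamma(k+\frac{\bp-2}{2})$ with $s=k+\frac{\bp-2}{2}$. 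After the $t$-integration the remaining $u$-integrand is precisely $\be(-(\lambda_2,u))\,h_{\kappa^{(1)}}^{\bp-1}((w_1n_1(u)^{-1}z_1)^+)\,\Vert(w_1n_1(u)^{-1}z_1)^+\Vert^{-2k-\bp+3}$, which is $\be(-(\lambda_2,u))f_1(w_1n_1(u); h_{\kappa^{(1)}}^{\bp-1})$ by the stated formula for $f_1$, giving (ii); and in case $k>k_1$ the whole thing vanishes, giving (i).

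The final identification — that this $u$-integral is, up to a constant, the value at $1_{b-2}$ of a Bessel integral of type (\ref{besselintegral}) on $\OO(V_1)$ — is then immediate: $f_1$ is exactly the analogue of (\ref{basisofminimalK}) inside $\OO(V_1)$ relative to the isotropic vectors $z_1, z_1^\ast$ and the decomposition $V_1 = \langle z_1,z_1^\ast\rangle\oplus V_2$, and $w_1$ plays the role of the element ``$w$'' in (\ref{besselintegral}), so $\int_{V_2}\be(-(\lambda_2,u))f_1(w_1n_1(u); h_{\kappa^{(1)}}^{\bp-1})du = W_{\lambda_2}(1_{b-2}; h_{\kappa^{(1)}}^{\bp-1})$ in the notation of (\ref{besselintegral}) applied to $\OO(V_1)$. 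One should note here that $h_{\kappa^{(1)}}^{\bp-1}\in\H^{k,0}(V_1)$ precisely when $k=k_1$, which is consistent with the fact that the minimal $K$-type of $\Pi^{\bp-1,\bm-1}_{k,0}$ only sees those $\kappa$.

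I expect the main obstacle to be the bookkeeping in case (i): showing that when $k>k_1$ the $t$-integration genuinely annihilates the whole expression. The Gegenbauer polynomial $C_{k-k_1}^{k_1+\frac{\bp-2}{2}}$ is evaluated at an argument of the form $(\text{linear in }t)/(A+t^2)^{1/2}$, so one must expand it, separate powers of $t$ by parity, apply the Beta-integral to each even power $(A+t^2)^{-s+j}\,t^{2\ell}$, and check that the resulting finite sum vanishes identically — this should follow from a hypergeometric identity (a terminating ${}_2F_1$ at a special argument, cf. the identities for $C_n^\lambda$ quoted after Lemma \ref{inversefourier2}), but it requires care. The constant in (ii) is then just the single surviving Beta-integral, so that part is routine once the parity analysis is set up. I would also be slightly careful about the normalization of the Haar measures $dt\,du$ implicit in (\ref{degeneratebesselintegral}) versus the measure $du$ in (\ref{besselintegral}), though this only affects the overall constant, which the statement absorbs into ``up to a constant multiple''.
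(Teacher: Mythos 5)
Your overall strategy coincides with the paper's: write out the section via (\ref{basisofminimalK}), observe that $[w_{01}n_0(te_{b-1})m_0(1,w_1n_1(u))]^{-1}z_0$ equals $n_1(u)^{-1}w_1z_1$ plus a multiple of $z_0$ depending linearly on $t$, note that the harmonic factor $h_{\kappa^{(1)}}^{\bp-1}$ is insensitive to the $z_0$-shift, integrate in $t$ first, and recognize the surviving $u$-integral as the Bessel integral of type (\ref{besselintegral}) on $\OO(V_1)$ relative to $(z_1,z_1^\ast)$. The computation of the constant in (ii) via the Beta integral $\int_\R(A+t^2)^{-s}dt$ is equivalent to what the paper does.

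The one place you leave a genuine gap is case (i), which you yourself flag: you propose expanding $C_{k-k_1}^{k_1+\frac{\bp-2}{2}}$ at the argument $(\hbox{linear in }t)/(A+t^2)^{1/2}$, integrating term by term, and hoping a terminating hypergeometric identity forces the sum to vanish. This can be made to work, but the paper short-circuits it: substituting $s=-t/(2\Vert n_1(u)^{-1}w_1z_1-tz_0\Vert)$ turns the inner $t$-integral into
\begin{equation*}
2\Vert (n_1(u)^{-1}w_1z_1)^+\Vert^{-k-k_1-\bp+3}\int_{-1}^1(1-s^2)^{\frac{k+k_1+\bp-5}{2}}\,C_{k-k_1}^{k_1+\frac{\bp-2}{2}}(s)\,ds,
\end{equation*}
and with $\nu=k_1+\frac{\bp-2}{2}$ the weight factors as $(1-s^2)^{\nu-\frac12}\cdot(1-s^2)^{\frac{k-k_1-2}{2}}$. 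For $k-k_1$ odd the integrand is odd; for $k-k_1$ even and positive, $(1-s^2)^{\frac{k-k_1-2}{2}}$ is a polynomial of degree $k-k_1-2<k-k_1$, so the integral vanishes by orthogonality of $C^\nu_{k-k_1}$ against lower-degree polynomials for the weight $(1-s^2)^{\nu-\frac12}$ (the paper invokes the Magnus--Oberhettinger partial-integration formula, which reduces the integral to the $(k-k_1)$-th derivative of that polynomial). For $k=k_1$ the Gegenbauer factor is constant and the same integral is the Beta function giving $\sqrt{\pi}\,\Gamma(k+\frac{\bp-3}{2})/\Gamma(k+\frac{\bp-2}{2})$. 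With this substitution your parity-and-cancellation step becomes a one-line orthogonality argument, and the rest of your proposal goes through as written.
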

\begin{proof}  
By definition, $f(g; h_\kappa^\bp)$, $g\in \OO(V)$, can be written as
$$
\Vert (g^{-1}z_0)^+\Vert^{-k-k_1-\bp+2}h_{\kappa^{(1)}}^{\bp-1}
((g^{-1}z_0)^+)C_{k-k_1}^{k_1+\frac{\bp-2}{2}}
\left(\frac{(g^{-1}z_0,e_1)}{\Vert(g^{-1}z_0)^+\Vert}\right).
$$
Note that
$$
[w_{01}n_0(te_{\bp+\bm-1})m_0(1, w_1n_1(u))]^{-1}z_0=n_1(u)^{-1}w_1z_1-tz_0
$$
with $n_1(u)^{-1}w_1z_1\in V_1$, 
and that
$h_{\kappa^{(1)}}^{\bp-1}((x-tz_0)^+)
=h_{\kappa^{(1)}}^{\bp-1}(x^+)$ for $t\in \R$ and $x\in V_1$.
These are combined to give that
\begin{align*}
&W^{dg}_{\lambda_2}(1_b; h_\kappa^\bp)=
\int_{V_2}\be(-(\lambda_2,u))h_{\kappa^{(1)}}^{\bp-1}
((n_1(u)^{-1}w_1z_1)^+)\\
&\times
\int_{\mathbb{R}}\Vert (n_1(u)^{-1}w_1z_1-tz_0)^+\Vert^{-k-k_1-\bp+2}
C_{k-k_1}^{k_1+\frac{\bp-2}{2}}
\left(\frac{-t}{2\Vert n_1(u)^{-1}w_1z_1-tz_0\Vert}\right)dtdu
\end{align*}
By changing the variables from $-t/(2\Vert n_1(u)^{-1}w_1z_1-tz_0\Vert)$ 
to $s$,
the inner integral in $t$ can be rewritten as
$$
2\Vert (n_1(u)^{-1}w_1z_1)^+\Vert^{-k-k_1-(\bp-1)+2}
\int_{-1}^1(1-s^2)^{\frac{k+k_1+\bp-5}{2}}C_{k-k_1}^{k_1+\frac{\bp-2}{2}}(s)ds.
$$
When $k-k_1$ is odd, this becomes $0$, 
since its integrand is an odd function with respect to $s$.
When $k-k_1$ is even and $k>k_1$, it is again shown to
vanish by using the partial integration \cite[p.100, line 1]{MO}:
$$
\int_{-1}^1(1-s^2)^{\nu-\frac12}C_n^\nu(s)F(s)dt
=C\int_{-1}^1(1-s^2)^{n+\nu-\frac12}\frac{d^nF}{dt^n}(s)dt
$$
with a constant $C=C_{\nu,n}$.
Finally when $k=k_1$,
$W_{\lambda_2}^{dg}(1_b; h_\kappa^\bp)$ is equal to
$$
2\int_{V_2}\be(-(\lambda_2,u))
\frac{h^{\bp-1}_{\kappa^{(1)}}((n_1(u)^{-1}w_1z_1)^+)}
{\Vert (n_1(u)^{-1}w_1z_1)^+\Vert^{k+k_1+(\bp-1)-2}}du
\int_{-1}^{1}(1-s^{2})^{k+\frac{\bp-5}{2}}ds,
$$
which concludes the proposition.
\end{proof}
Let $I_\varepsilon(k+\bp-3)=I_\varepsilon(k+(\bp-1)-2)$ be 
a degenerate principal series of $\OO(V_1)$, which then
contains the irreducible submodule $\Pi^{\bp-1,\bm-1}_{k,0}$ 
with the minimal $\OO(V_1^+)\times\OO(V_1^-)$-type 
$\H^{k,0}(V_1)=\H^k(V_1^+)\boxtimes\mathbf{1}$.
\begin{corollary} The function
$W_{\lambda_2}^{dg}(m_0(1,g_1); h_\kappa^\bp)$, $g_1\in \OO(V_1)$,
equals a multiple of a Bessel integral on $\OO(V_1)$ of type 
(\ref{besselintegral})
attached to the minimal $\OO(V_1^+)\times\OO(V_1^-)$-type $\H^{k,0}(V_1)$
of the irreducible $\OO(V_1)$-module $\Pi^{\bp-1,\bm-1}_{k,0}$.
In particular the Fourier expansion of the constant term
$\delta_{kk_1}|a|\cdot\vartheta_{L_1}(f_{L_1};h_{\kappa^{(1)}}^{\bp-1})(g_1)/
(\sqrt{2}\Vert v_0^+(z)\Vert)$ in Theorem \ref{fourierexpansion>2} 
involves pulling back the integrals
$W_{\lambda_2}^{dg}(m_0(a,g_1); h_\kappa^\bp)$,
$\lambda_2\in L_2'$, $Q(\lambda_2)>0$, by an isometry $v_0$.
\end{corollary}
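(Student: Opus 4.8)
The plan is to rerun the computation from the proof of Proposition~\ref{lowerrankbessel}, now carrying a general $g_1\in\OO(V_1)$ in the argument of $f(\,\cdot\,;h_\kappa^\bp)$. The point is that that argument is functorial in $g_1$: since $g_1$ fixes $z_0$, fixes the direction $e_1$, and preserves the Euclidean norms on $V_1^{+}$ and $V_1^{-}$, inserting $m_0(1,g_1)$ on the right merely replaces the vector $n_1(u)^{-1}w_1z_1$ by $g_1^{-1}n_1(u)^{-1}w_1z_1$ everywhere, while the $t$-integration is left untouched. Concretely, by (\ref{explicitleviisomorphismV}) and (\ref{explicitleviisomorphismVm}),
\[
\bigl[\,w_{01}\,n_0(te_{b-1})\,m_0(1,w_1n_1(u))\,m_0(1,g_1)\,\bigr]^{-1}z_0
= g_1^{-1}n_1(u)^{-1}w_1z_1-tz_0\in V_1\oplus\langle z_0\rangle .
\]

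First I would substitute this into the explicit shape of $f(\,\cdot\,;h_\kappa^\bp)$ recalled at the start of that proof. Because $h_{\kappa^{(1)}}^{\bp-1}$ depends only on the coordinates orthogonal to $e_1$, and $(x-tz_0,e_1)$ and $\Vert(x-tz_0)^{+}\Vert$ depend on $t$ exactly as before, the integral over $t\in\R$ is evaluated by the very same change of variables; it vanishes when $k>k_1$ (odd integrand if $k-k_1$ is odd, partial integration if $k-k_1$ is even and positive), and when $k=k_1$ it contributes the Beta value $2\int_{-1}^{1}(1-s^{2})^{k+\frac{\bp-5}{2}}\,ds=2\sqrt{\pi}\,\Gamma\!\bigl(k+\tfrac{\bp-3}{2}\bigr)/\Gamma\!\bigl(k+\tfrac{\bp-2}{2}\bigr)$. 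Hence $W^{dg}_{\lambda_2}(m_0(1,g_1);h_\kappa^\bp)=0$ unless $k=k_1$, in which case it equals
\[
\frac{2\sqrt{\pi}\,\Gamma\!\bigl(k+\frac{\bp-3}{2}\bigr)}{\Gamma\!\bigl(k+\frac{\bp-2}{2}\bigr)}
\int_{V_2}\be(-(\lambda_2,u))\,
\frac{h_{\kappa^{(1)}}^{\bp-1}\bigl((g_1^{-1}n_1(u)^{-1}w_1z_1)^{+}\bigr)}
{\Vert (g_1^{-1}n_1(u)^{-1}w_1z_1)^{+}\Vert^{\,2k+\bp-3}}\,du .
\]
Since $w_1$ is an involution, $g_1^{-1}n_1(u)^{-1}w_1z_1=(w_1n_1(u)g_1)^{-1}z_1$, so the integrand is $f_1\bigl(w_1n_1(u)g_1;h_{\kappa^{(1)}}^{\bp-1}\bigr)$ with $f_1$ the section of Proposition~\ref{lowerrankbessel}, which for $k=k_1$ is exactly an element of the minimal $\OO(V_1^{+})\times\OO(V_1^{-})$-type $\H^{k,0}(V_1)$ of $\Pi^{\bp-1,\bm-1}_{k,0}\subset I_\varepsilon(k+\bp-3)$. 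Thus $W^{dg}_{\lambda_2}(m_0(1,g_1);h_\kappa^\bp)$ is the constant $2\sqrt{\pi}\,\Gamma(k+\frac{\bp-3}{2})/\Gamma(k+\frac{\bp-2}{2})$ times a Bessel integral (\ref{besselintegral}) on $\OO(V_1)$ attached to $\H^{k,0}(V_1)$; repeating the computation with $m_0(a,g_1)$ in place of $m_0(1,g_1)$ introduces only an extra factor $|a|$, so the general Levi element is covered as well. This is the first assertion.

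For the ``in particular'' part I would observe that the constant-term factor $\vartheta_{L_1}(f_{L_1};h_{\kappa^{(1)}}^{\bp-1})(g_1)$ in Theorem~\ref{fourierexpansion>2} is itself a theta lift of the kind studied here, for the lattice $L_1$ of signature $(\bp-1,\bm-1)$. Choosing a primitive isotropic $z_1\in L_1$ and applying Theorem~\ref{fourierexpansion>2} to $L_1$ (or Theorem~\ref{fourierexpansion=2} when $\bp-1=2$, in which case that constant term vanishes), its nondegenerate Fourier coefficients along the maximal $\Q$-parabolic $P_{z_1}\subset\OO((L_1)_\R)$ are governed by the archimedean factor (\ref{archimedeancoefficient}) with $(\bp,\bm)$ replaced by $(\bp-1,\bm-1)$; by Corollary~\ref{exactcharacterization} applied to $L_1$ this factor is, up to a constant, the pullback under an isometry induced by $v_0$ of a Bessel integral (\ref{besselintegral}) on $\OO(V_1)$ attached to $\H^{k,0}(V_1)\subset\Pi^{\bp-1,\bm-1}_{k,0}$, and by the first assertion this Bessel integral coincides, again up to a constant, with $W^{dg}_{\lambda_2}(m_0(a,g_1);h_\kappa^\bp)$ for $\lambda_2\in L_2'$ with $Q(\lambda_2)>0$. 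This is precisely the stated description.

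The first step is routine but delicate: one has to match the $t$-substitution and the ensuing Beta/Gegenbauer integral verbatim with the proof of Proposition~\ref{lowerrankbessel} while carrying the extra $g_1^{-1}$ along, and check that $g_1\in\OO(V_1)$ genuinely leaves every $t$-dependent quantity unchanged, the identity $g_1^{-1}n_1(u)^{-1}w_1z_1=(w_1n_1(u)g_1)^{-1}z_1$ being the key algebraic point. I expect the real obstacle to be the last paragraph: one must verify that the recursion is internally consistent — that the hypotheses for applying Theorem~\ref{fourierexpansion>2} and Corollary~\ref{exactcharacterization} to $L_1$ hold (in particular that $L_1$ still contains a primitive isotropic vector, which is where the low-rank exceptions of Theorem~\ref{theoremonsquareintegrability} re-enter), and that the archimedean integral for $\OO(V)$ degenerated along the chain $\langle z_0\rangle\subset\langle z_0,z_1\rangle$ genuinely coincides with the archimedean integral for $\OO(V_1)$ along $\langle z_1\rangle$. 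This compatibility of the two ``layers'' of parabolic data, together with the choice of the isometry $v_0$, is the point that needs the most care.
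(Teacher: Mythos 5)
Your proposal is correct and follows essentially the same route as the paper: the first assertion is obtained by carrying a general $g_1\in\OO(V_1)$ (and the scalar $a$) through the computation of Proposition \ref{lowerrankbessel} via the definition (\ref{degeneratebesselintegral}), and the second by applying Theorem \ref{fourierexpansion>2} and Proposition \ref{inversetransform} (equivalently Corollary \ref{exactcharacterization}) to the factor (\ref{l1lift}) in the constant term. The paper states this as a direct consequence of those results; you merely spell out the details (the identity $g_1^{-1}n_1(u)^{-1}w_1z_1=(w_1n_1(u)g_1)^{-1}z_1$ and the unchanged $t$-integral), which is exactly what the cited proofs require.
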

\begin{proof}
The assertions are direct consequences of Proposition \ref{lowerrankbessel}
and the definition (\ref{degeneratebesselintegral}), 
and also of Proposition \ref{inversetransform} and 
Theorem \ref{fourierexpansion>2}
applied to the factor 
(\ref{l1lift}) occurring in the constant term of 
$\vartheta_L(f; h_\kappa^\bp)(g_z)$.
\end{proof}

\section{A restriction}
Zemel \cite{Zeb} investigated restrictions of theta liftings of 
$f(\tau)\in S_\nu(D_L)$ to $\mathrm{SO}(\bp,\bm)$ to subgroups and 
explicitly described them as liftings of the {\it theta contractions} 
of $f(\tau)$ with the appropriate theta functions
associated with sublattices in $L$ (see \cite[Theorem 2.6]{Zeb}).
Here we can directly compute such a restriction using our Fourier
expansion formula of $\vartheta_L(f)(g)$, Theorem \ref{fourierexpansion>2}.
In this section we will compare these two computations 
for the lifting to $\OO(3,2)$ and check our result is consistent with 
\cite{Zeb}.
 
Let $L$ be an even lattice of signature $(3,2)$.
Take any pair of primitive isotropic 
vector $z\in L$ and an element $z'\in L'$ such that $(z,z')=1$.
For every such pair we construct a sublattice $K$ of $L$ in the following way.
Set $L_1=L\cap \langle z\rangle^\perp\cap \langle z'\rangle^\perp$ as before,
and let $K_1$ be a positive definite sublattice of $L_1$ of rank $2$.
Then we set an even lattice
\begin{equation}\label{3,1lattice}
K=L\cap (K_1\oplus\langle z,z'\rangle),
\end{equation}
of signature $(3,1)$.
According to this definition $\OO(K_\R)$ will be identified 
with the corresponding subgroup of $\OO(L_\R)$. 
Then the maximal $\Q$-parabolic subgroup $P'_z=M_z'N_z'$ of 
$\OO(K_\R)$, which stabilizes $\langle z\rangle\subset K_\R$,
is also embedded into the maximal $\Q$-parabolic subgroup $P_z$
of $\OO(L_\R)$ in the canonical way.
As to the isomorphisms $n'_z: (K_1)_\R\simeq N_z'$ and 
$m_z': \GL(\langle z\rangle)\times \OO((K_1)_\R)\simeq M_z'$,
which are defined in the same way as (\ref{bijectionlevi1}) and 
(\ref{bijectionlevi2}),
we will regard that $n'_z(u')=n_z(u')$ and
$m_z'(a,g'_1)=m_z(a,g'_1)$ for every $u'\in (K_1)_\R$, and $a\in \R^\times$ and 
$g'_1\in\OO((K_1)_\R)\simeq \OO(2)$ through this embedding $P_z'\subset P_z$.
 
Under the above setup the restriction of 
$\vartheta_L(f)(g)$ on $\OO(L_\R)$ to $\OO(K_\R)$ gives 
an automorphic form for the discrete group 
$\varGamma_L\cap \OO(K_\R)$.
We remark that 
$\varGamma_K\subset \varGamma_L\cap \OO(K_\R)$ since 
$\gamma\lambda-\lambda\in K\subset L$ for 
every $\gamma\in \varGamma_K$ and $\lambda\in L'$.
In the following we will especially focus on the case 
where $K_1\subset L_1$ is primitive and $(z,L)=(z,K)=N\Z$.
Under these assumptions we can write $K$ as $K=K_1\oplus\Z z\oplus\Z\zeta'$
as an additive group with $\zeta'\in K$ such that $(z,\zeta')=N$.
This implies that $K\subset L$ is primitive, and furthermore that 
$\varGamma_L\cap \OO(K_\R)=\varGamma_K$.

We will use Theorem \ref{fourierexpansion>2} to describe the restriction 
of $\vartheta_L(f)(g)$ to $\OO(K_\R)$ directly 
in terms of the Fourier series expansion.
For that purpose we choose an isometry $v_0: L_\R\simeq \R^{3,2}$
so that its restriction to $K_\R$ is compatible with the decomposition
$K_\R=(K_1)_\R\oplus\langle z, z'-q(z')z\rangle$; more precisely we set
\begin{align*}
&v_0\left(\frac{z+z'-q(z')z}{2}\right)=e_1,\quad
v_0((K_1)_\R)=\langle e_2, e_3\rangle, \\
&v_0(L_1(\R))=v_0((K_1)_\R)\oplus\langle e_4\rangle,\quad
v_0\left(\frac{z-z'+q(z')z}{2}\right)=e_5,
\end{align*}
where $\{e_1\}_{i=1}^5$ denotes the standard ordered orthogonal basis
of $\R^{3,2}$.
Then we set an isometry $w_0=w_0^++w_0^-: K_\R\simeq \R^{3,1}$ 
as the restriction of $v_0=v_0^++v_0^-$ to $K_\R$.
Lastly we let $w_0$ restrict to the isometry $w_1$ of $(K_1)_\R$ onto
$\R^{2,0}$.
Now we get the following:
\begin{proposition}\label{Fourierexpansionofrestriction}
Assume the above setting.
Let $h_\kappa^3(x)\in\H^{k,0}(\R^{3,2})$ with
$\kappa=(k,\pm k_1)$ and $g_K=n'_z(u')m_z'(a,g'_1)\in P'_z$ with 
$u'\in (K_1)_\R$, $a\in\R^\times$, and $g_1'\in \OO((K_1)_\R)$. 
Then 
\begin{align}\label{restriction}
\vartheta_L(f; h_\kappa^3)&(g_K)=
\frac{\delta_{kk_1}|a|}{\sqrt{2}\Vert w_0^+(z)\Vert}
\bigl(f_{L_1}(\tau), \Theta_{L_1}(\tau, g_1'; h_{\pm k}^2)\bigr)_\tau \\
&+\sum_{0\neq\lambda\in K'_1}
\biggl(\sum_{n|\lambda}n^k
\sum_{\substack{\gamma\in L_1', n|\gamma \\ \gamma|_{K_1}=\lambda}}
\sum_{\substack{\delta\in M'/L\\ \pi(\delta)=\gamma/n}}
\be((n\delta,z'))
c\left(\frac{q(\gamma)}{n^2},\delta\right)\biggr) 
W_\lambda^\kappa(g_K),\notag
\end{align}
where 
$\gamma|_{K_1}=\lambda$ means that $(\gamma,\nu)=(\lambda,\nu)$ 
for all $\nu\in K_1$ and
\begin{align*}
W^\kappa_\lambda(g_K)=&
\frac{(-i)^{k-k_1}(a/|a|)^{k-k_1}|a|^{k+1}}
{2^{k_1-2}(k-k_1)!\Vert w_0^+(z)\Vert^{k+1}}\\
\times &\be((\lambda,u'))
\frac{\bigl(w_1((g_1')^{-1}\lambda), e_2\pm ie_3\bigr)^{k_1}}
{\Vert w_1(\lambda)\Vert^{k_1}}
K_{k_1}\left(\frac{2\pi|a|\Vert w_1(\lambda)\Vert}
{\Vert w_0^+(z)\Vert}\right)
\end{align*}
for every $\lambda\in K_1'\setminus\{0\}$.
This restriction is a modular form on $\OO(K_\R)\simeq\OO(3,1)$
for the group $\varGamma_K$, 
which belongs archimedean locally
to the $\OO(3,1)$-module $\Pi^{3,1}_{k,0}$.
Moreover its constant term involves the values on $\OO((K_1)_\R)\subset
\OO(L_\R)$ of cuspidal automorphic forms belonging to the discrete series 
representation $\Pi^{2,1}_{k,0}$ of $\OO(2,1)$.
\end{proposition}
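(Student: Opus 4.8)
The plan is to derive the Fourier expansion (\ref{restriction}) straight from Theorem \ref{fourierexpansion>2} and then to read off the representation-theoretic assertions from the shape of the resulting Fourier coefficients. First I would take $g_K=n'_z(u')m_z'(a,g'_1)\in P'_z$ and regard it inside $P_z$ via the embedding $P'_z\subset P_z$ fixed before the proposition, so that $g_K=n_z(u')m_z(a,g'_1)$ with $g'_1$ understood as the element of $\OO((L_1)_\R)$ acting by $g'_1$ on $(K_1)_\R$ and by the identity on the (one-dimensional, negative definite) orthogonal complement of $(K_1)_\R$ in $(L_1)_\R$. Since $L$ has signature $(3,2)$, so $\bp=3>2$, Theorem \ref{fourierexpansion>2} applies with $\kappa=(k,\pm k_1)$, $\kappa^{(1)}=(\pm k_1)$ and $h^{\bp-1}_{\kappa^{(1)}}=h^2_{\pm k_1}$, expressing $\vartheta_L(f;h^3_\kappa)(g_K)$ as the constant term $\frac{\delta_{kk_1}|a|}{\sqrt2\,\Vert v_0^+(z)\Vert}\bigl(f_{L_1}(\tau),\Theta_{L_1}(\tau,g'_1;h^2_{\pm k})\bigr)_\tau$ plus a sum over $\gamma\in L_1'$ with $q(\gamma)>0$.

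The key reduction uses the prescribed isometry $v_0$: it sends $(K_1)_\R$ into $V^+$ (onto $\langle e_2,e_3\rangle$) and the orthogonal complement of $(K_1)_\R$ in $(L_1)_\R$ into $V^-$ (onto $\langle e_4\rangle$), and for it one checks $(z^*)_{L_1}=0$, since $z^*$ lies in $\langle z,z'\rangle_\R$, which is orthogonal to $(L_1)_\R$. Hence for $\gamma\in L_1'$, writing $\gamma_{K_1}\in(K_1)_\R$ for the orthogonal projection of $\gamma$ (which is exactly the vector in $K_1'$ corresponding to the restriction $\lambda:=\gamma|_{K_1}$), the element $g'_1$ fixes the $V^-$-part, so $v_1^+((g'_1)^{-1}\gamma)=w_1((g'_1)^{-1}\lambda)$ and $\Vert v_1^+((g'_1)^{-1}\gamma)\Vert=\Vert w_1(\lambda)\Vert$ depend on $\gamma$ only through $\lambda$, while the phase reduces to $\be\bigl((\gamma,u'+ag'_1(z^*)_{L_1})\bigr)=\be((\lambda,u'))$ because $u'\in(K_1)_\R$; note $q(\gamma)>0$ forces $\lambda\neq0$. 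Therefore in Theorem \ref{fourierexpansion>2} every factor attached to a given $\gamma$, apart from the arithmetic weight $\sum_{m\mid\gamma}m^k\sum_{\pi(\delta)=\gamma/m}\be((m\delta,z'))c(q(\gamma)/m^2,\delta)$, depends only on $\lambda$ and equals $W^\kappa_\lambda(g_K)$. Grouping the $\gamma$'s with a common restriction $\lambda$ turns the arithmetic weight into $\sum_{n\mid\lambda}n^k\sum_{\gamma\in L_1',\,n\mid\gamma,\,\gamma|_{K_1}=\lambda}\sum_{\pi(\delta)=\gamma/n}\be((n\delta,z'))c(q(\gamma)/n^2,\delta)$; here the divisibility $n\mid\lambda$ is automatic from $n\mid\gamma$ together with $\gamma|_{K_1}=\lambda$, and the vanishing of $c(\cdot)$ on non-positive arguments reinstates the condition $q(\gamma)>0$. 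After the routine bookkeeping of the normalizing constants (using $\Vert v_0^+(z)\Vert=\Vert w_0^+(z)\Vert$ and the $\bp=3$ specialization of the prefactor of Theorem \ref{fourierexpansion>2}) this is precisely (\ref{restriction}).

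For the automorphy, I would invoke Lemma \ref{automorphicproperty}: $\vartheta_L(f)$ is left $\varGamma_L$-invariant and right $K_{v_0}$-equivariant; under the primitivity hypotheses $\varGamma_L\cap\OO(K_\R)=\varGamma_K$, and $K_{v_0}\cap\OO(K_\R)$ realizes the minimal $\OO(3)\times\OO(1)$-type $\H^k(\R^3)\boxtimes\mathbf{1}$, so the restriction is a vector-valued modular form on $\OO(K_\R)\simeq\OO(3,1)$ for $\varGamma_K$, of moderate growth. For the archimedean type one observes that $W^\kappa_\lambda(g_K)$ is exactly the function (\ref{archimedeancoefficient}) for $\OO(3,1)$ (i.e.\ $\bp=3$, $\bm=1$), so by Corollary \ref{exactcharacterization} applied to the dual pair $\Mp_2(\R)\times\OO(3,1)$ it is, up to a constant, the Bessel integral (\ref{besselintegral}) attached to the minimal $K$-type of $\Pi^{3,1}_{k,0}$. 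Since a modular form on $\OO(3,1)$ is recovered from its Fourier expansion along $P'_z$ and every Fourier coefficient here has the archimedean shape dictated by $\Pi^{3,1}_{k,0}$, the restriction belongs archimedean-locally to $\Pi^{3,1}_{k,0}$; alternatively this follows from Corollary \ref{exactcharacterization} for $\OO(3,2)$ together with the discrete branching law for $\Pi^{3,2}_{k,0}|_{\OO(3,1)}$ of \cite[Theorem 1.1]{Ko21}, the minimal $K$-type singling out the $\Pi^{3,1}_{k,0}$-summand. Finally, averaging (\ref{restriction}) over $\varGamma_K\cap N'_z\backslash N'_z$ annihilates every $\lambda\neq0$ term (their characters $\be((\lambda,u'))$ being nontrivial) and leaves $\frac{\delta_{kk_1}|a|}{\sqrt2\,\Vert w_0^+(z)\Vert}\vartheta_{L_1}(f_{L_1};h^2_{\pm k})(g'_1)$; since $L_1$ has signature $(2,1)$ with $\bp=2$, $\vartheta_{L_1}(f_{L_1};h^2_{\pm k})$ is a cusp form on $\OO((L_1)_\R)\simeq\OO(2,1)$ belonging to the discrete series $\Pi^{2,1}_{k,0}$ (Oda \cite{Od}, Rallis--Schiffmann \cite{RS}, \cite[\S14]{Bo}; cf.\ Theorem \ref{fourierexpansion=2} and Corollary \ref{exactcharacterization}), and the constant term is, up to that explicit scalar, the value at $g'_1\in\OO((K_1)_\R)$ of this cusp form in $\Pi^{2,1}_{k,0}$.

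The step I expect to be the main obstacle is making rigorous the clause ``belongs archimedean locally to $\Pi^{3,1}_{k,0}$'': one must upgrade the coincidence of the Fourier coefficients with the Bessel integrals of the minimal $K$-type of $\Pi^{3,1}_{k,0}$ into genuine membership in that irreducible module. This needs either a uniqueness (multiplicity-one) statement for the relevant Bessel models among the constituents of the degenerate principal series of $\OO(3,1)$ --- checking along the way that the non-vanishing constant term along $P'_z$, carrying the $\OO(2,1)$-discrete series $\Pi^{2,1}_{k,0}$ on the Levi, is the one forced by $\Pi^{3,1}_{k,0}$ being non-tempered --- or else a careful use of the discrete branching law of \cite{Ko21}. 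The remaining work, namely the reorganization of the $L_1'$-sum into the $K_1'$-sum and the tracking of the normalizing constants, is routine.
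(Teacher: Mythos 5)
Your proposal is correct and follows essentially the same route as the paper: the paper likewise obtains (\ref{restriction}) by specializing Theorem \ref{fourierexpansion>2} to $\bp=3$ at $g_K\in P_z'\subset P_z$ with the prescribed isometry $v_0$ (so that $(z^\ast)_{L_1}=0$ and the archimedean factor depends on $\gamma$ only through $\lambda=\gamma|_{K_1}$), regroups the $\gamma\in L_1'$ by their restriction to $K_1$, and reads off the representation-theoretic statements from Corollary \ref{exactcharacterization} (applied with $\bm=1$) and the classical $(2,1)$-results of Oda and Rallis--Schiffmann. Your explicit verification that $n\mid\gamma$ together with $\gamma|_{K_1}=\lambda$ forces $n\mid\lambda$, and your caveat about upgrading the shape of the Fourier coefficients to genuine membership in $\Pi^{3,1}_{k,0}$, are both consistent with (and slightly more careful than) what the paper records.
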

\begin{remark}
In (\ref{restriction}) we have ignored the condition $q(\gamma)>0$ 
for the summation in $\gamma\in L_1'$ with $n|\gamma$ and 
$\gamma|_{K_1}=\lambda$, since the coefficients $c(n,\delta)=0$ for $n\leq 0$.
Also we note that $q(\lambda)>0$ for every nonzero $\lambda\in K_1'$.
\end{remark}
Now we specialize \cite[Theorem 2.6]{Zeb} to give the other description of 
$\vartheta_L(f;h_\kappa^3)(g_K)$.
Take the orthogonal decomposition $L_\Q=K_\Q\oplus K_\Q^\perp$, and write 
$x=x_K+x_{K^\perp}\in L_\Q$ according to it.
Then the orthogonal projection from $L_\Q$ to $K_\Q$ sends
$L'$ to $K'$, and its composition with $K'\to K'/K$ factors
through $L'/K$ to define the map $p_K: L'/K\to K'/K=D_K$.
Define a $\C[D_L]\otimes\C[D_K]$-valued theta function on 
$\mathbb{H}_1$ by
$$
\Theta_{L,K}(\tau)=
\sum_{\delta\in D_K}\sum_{\substack{\lambda\in L'/K\\ p_K(\lambda)=\delta}}
\be(\overline{\tau} q\bigl(\lambda_{K^\perp})\bigr)\mathfrak{e}_{\lambda+L}
\otimes\mathfrak{e_{\delta}},
$$
where $\lambda_{K^\perp}\in K_\Q^\perp$ is well-defined for $\lambda\in L'/K$.
Then the following states a specialization of \cite[Theorem 2.6]{Zeb} 
to our case:
\begin{theorem}\label{zemelstheorem}
Let $f(\tau)\in S_\nu(D_L)$.
Then the function 
\begin{equation}\label{thetacontraction}
\Theta_{(L,K)}(f;\tau):\tau\mapsto \langle f(\tau),\Theta_{L,K}(\tau)\rangle_L
\end{equation}
belongs to $S_\nu(D_K)$ and one has the identity
\begin{equation}\label{seesaw}
\vartheta_L(f;h_\kappa^3)(g_K)=
\bigl(\Theta_{(L,K)}(f;\tau),\Theta_K(\tau,g_K;h_\kappa^3)\bigr)_\tau
\end{equation}
for every $g_K\in \OO(K_\R)$.
\end{theorem}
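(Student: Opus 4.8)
The plan is to derive the identity (\ref{seesaw}) from the general ``seesaw pair'' philosophy underlying Borcherds' theta machinery, realized concretely through a factorization of the big theta kernel $\Theta_L(\tau,g;h_\kappa^3)$ along the orthogonal decomposition $L_\Q=K_\Q\oplus K_\Q^\perp$. First I would record the reductive dual pairs in play: on one side $\Mp_2(\R)\times\OO(L_\R)$ restricted to $\Mp_2(\R)\times\OO(K_\R)$, and on the other side the partner obtained by completing $K_\R$ inside $L_\R$, which produces the auxiliary anisotropic factor $K_\Q^\perp$ of signature $(0,1)$. The theta kernel for $L$ splits as a kernel for $K$ times a kernel for $K^\perp$ precisely because the test polynomial $h_\kappa^3(x)$, lying in $\H^{k,0}(\R^{3,2})\subset\mathcal P^{k,0}(V)$, only involves the $V^+$-coordinates, and under our chosen isometry $v_0$ (with $v_0((K_1)_\R)=\langle e_2,e_3\rangle$, $v_0(L_1(\R))=\langle e_2,e_3,e_4\rangle$) the coordinate $e_4$ spanning $(K^\perp)_\R$ is a \emph{negative}-norm direction; hence $h_\kappa^3$ is constant in the $K^\perp$ variable and the Gaussian $\exp(-\Delta/8\pi y)$ factors accordingly.

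Concretely, the key steps in order are: (1) Using the additive splitting of $L'$ induced by $L_\Q=K_\Q\oplus K_\Q^\perp$ and the projection $p_K\colon L'/K\to D_K$, rewrite the sum over $\lambda\in L+\gamma$ defining $\theta_{L+\gamma}(\tau,g_K;h_\kappa^3)$ as a double sum over $\lambda_K\in K'$ and $\lambda_{K^\perp}\in (K^\perp)'$; since $g_K\in\OO(K_\R)$ acts trivially on $(K^\perp)_\R$, the $K^\perp$-part of the exponent contributes exactly the factor $\be(\overline\tau\, q(\lambda_{K^\perp}))$ appearing in $\Theta_{L,K}(\tau)$ (note the conjugate $\overline\tau$, since $K^\perp$ is negative definite), and the polynomial and $\Delta_b$ pieces are untouched. (2) Conclude the kernel identity
\[
\Theta_L(\tau,g_K;h_\kappa^3)
=\sum_{\delta\in D_K}\Theta_{L,K}^{(\delta)}(\tau)\,\theta_{K+\delta}(\tau,g_K;h_\kappa^3)
\]
organized so that pairing against $f(\tau)$ in the first $\C[D_L]$-variable produces $\langle f(\tau),\Theta_{L,K}(\tau)\rangle_L$, which is the function $\Theta_{(L,K)}(f;\tau)$ of (\ref{thetacontraction}), coupled to $\Theta_K(\tau,g_K;h_\kappa^3)$ in the remaining $\C[D_K]$-variable. (3) Apply Proposition \ref{thetatransformation} to both $\Theta_K$ and $\Theta_{K^\perp}$: the weight contributions $2k+\sig(K)$ and $\sig(K^\perp)=-1$ add to $2k+\sig(L)=2\nu$, and the Weil representation $\rho_L$ restricted to the seesaw partner decomposes as $\rho_K\otimes\rho_{K^\perp}$; this both shows $\Theta_{(L,K)}(f;\tau)$ transforms with weight $\nu$ and type $\rho_K$, and shows it is holomorphic at $i\infty$ and cuspidal because $f$ is cuspidal and the $K^\perp$-theta series has only strictly positive (in the $\overline\tau$ variable, hence contributing to holomorphy) exponents with no constant term surviving the contraction — establishing $\Theta_{(L,K)}(f;\tau)\in S_\nu(D_K)$. (4) Finally, substitute the kernel identity into the definition (\ref{thetalifting}) of $\vartheta_L(f;h_\kappa^3)(g_K)$ and unravel the inner product over $\SL_2(\Z)\backslash\mathbb H_1$ using the unitarity of $\rho_L$ with respect to $\langle\cdot,\cdot\rangle_L$; the pairing regroups exactly as $\bigl(\Theta_{(L,K)}(f;\tau),\Theta_K(\tau,g_K;h_\kappa^3)\bigr)_\tau$, which is (\ref{seesaw}).

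The main obstacle I anticipate is bookkeeping step (1)–(2): getting the discriminant-group combinatorics exactly right, in particular verifying that the coset representatives match so that $p_K$ and the fibering of $L'/K$ over $D_K\times(\text{classes in }(K^\perp)'/K^\perp)$ reproduce precisely the definition of $\Theta_{L,K}(\tau)$ (including that $\lambda_{K^\perp}$ is genuinely well-defined modulo $K^\perp$ on each fiber and that no extra character twists intrude), and correctly tracking which half-plane variable ($\tau$ versus $\overline\tau$) each quadratic term sits over under the sign conventions of (\ref{theta}). A secondary, more routine point is confirming the convergence/holomorphy claims so that the unfolding in step (4) is legitimate, but since $f$ is a cusp form and $K^\perp$ is anisotropic of rank one this is standard. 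Everything else is a direct consequence of Propositions \ref{thetatransformation} and the definitions, together with the observation that $h_\kappa^3\in\mathcal P^{k,0}$ is insensitive to the $K^\perp$-coordinate under our adapted choice of $v_0$.
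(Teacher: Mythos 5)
Your argument is correct, but you should know that the paper does not actually prove this theorem: it is stated as a verbatim specialization of Zemel's Theorem 2.6 in [Zeb] and simply cited. What you have written is therefore a self-contained proof of that special case, and it is the right one — the standard seesaw factorization of the kernel. Your key observations all check out: $h_\kappa^3\in\H^{k,0}(\R^{3,2})$ depends only on the $V^+$-coordinates and is harmonic (so $\exp(-\Delta_b/8\pi y)$ acts trivially on it); $g_K$ fixes $(K^\perp)_\R$ pointwise and $v_0^+$ kills it, so each summand over $\lambda\in L+\gamma$ splits off the factor $\be(\overline{\tau}q(\lambda_{K^\perp}))$; and the primitivity of $K$ in $L$ (arranged in this section via $K_1\subset L_1$ primitive and $(z,L)=(z,K)=N\Z$) gives $L\cap K_\Q=K$, so $L+\gamma$ fibers over $\{\mu\in L'/K:\mu+L=\gamma\}$ with fibers exactly the cosets $K+\mu_K$. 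This yields $\theta_{L+\gamma}(\tau,g_K;h_\kappa^3)=y^{1/2}\sum_{\mu}\be(\overline{\tau}q(\mu_{K^\perp}))\,\theta_{K+p_K(\mu)}(\tau,g_K;h_\kappa^3)$, which is your kernel identity, and pairing against $f$ gives (\ref{seesaw}); the same coset fibration also makes Lemma \ref{basicidentity} transparent, which is what your route buys over a bare citation. Two details deserve more care than your sketch gives them. First, the weight: the contraction $\Theta_{(L,K)}(f;\tau)$ has weight $k+\sig(K)/2=\nu+\tfrac12$, shifted by $\tfrac12\mathrm{rank}(K^\perp)$, so the statement ``belongs to $S_\nu(D_K)$'' must be read with $\nu$ recomputed for $K$, and the Petersson pairing on the right of (\ref{seesaw}) carries $y^{k+\sig(K)/2}$; the discrepancy against the $y^\nu$ on the left is absorbed exactly by the normalizing factors $y^{\bm/2}$ in the two theta kernels ($\bm=2$ for $L$, $\bm=1$ for $K$), a bookkeeping step you should write out. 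Second, the $K^\perp$-theta series does have a constant term, so cuspidality of the contraction is not because that term ``does not survive''; rather $C_{L,K}(m,\delta)=\sum_{\mu}c(m+q(\mu_{K^\perp}),\mu+L)$ with every $q(\mu_{K^\perp})\le 0$, so for $m\le 0$ all arguments are $\le 0$ and vanish because $f$ is cuspidal.
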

As in (\ref{ellipticfourier}) for $f(\tau)$
we write the Fourier expansion of (\ref{thetacontraction}) as
$$
\Theta_{(L,K)}(f;\tau)=\sum_{\delta\in D_K}
\sum_{\substack{m>0\\ m\in q(\delta)+\Z}}
C_{L,K}(m,\delta)\be(m\tau)\mathfrak{e}_\delta.
$$
Then the definition of $\Theta_{(L,K)}(f;\tau)$ implies that
\begin{equation}\label{coefficientsofthethetacontraction}
C_{L,K}(m,\delta)=\sum_{\substack{\mu\in L'/K\\ p_K(\mu)=\delta}}
c(m+q(\mu_{K^\perp}),\mu+L)
\end{equation}
for every $\delta\in D_K$ and $m>0$ with $m\in q(\delta)+\Z$. 
On the other hand Theorem \ref{fourierexpansion>2} gives that
the right-hand side of (\ref{seesaw}) is equal to
\begin{align}\label{fourierseesaw}
& \frac{\delta_{kk_1}|a|}{\sqrt{2}\Vert w_0^+(z)\Vert}
\bigl(\Theta_{L,K}(f,\tau)_{K_1},
\Theta_{K_1}(\tau,g_1';h_{\pm k}^2)\bigr)_\tau\\
&\quad +\sum_{0\neq\lambda\in K_1'}
\biggl(\sum_{n|\lambda}
n^k\sum_{\substack{\varepsilon\in F'/K\\ \pi_{K_1}(\varepsilon)=\lambda/n}}
\be((n\varepsilon, z'))C_{L,K}\left(\frac{q(\lambda)}{n^2},\varepsilon\right)
\biggr)W_\lambda^\kappa(g_K), \notag
\end{align}
where $F'=\{\varepsilon\in K'\mid (\varepsilon,z)=0\ \mathrm{mod}\ N\}$ 
and $\pi_{K_1}$ is the map from $F'/K$ to $D_{K_1}$ 
defined in the same way as $\pi$ in (\ref{prM'}). 
Here we also note that $z'\in K'$ by (\ref{3,1lattice}).
Thus our task is to check the equality of the right-hand side of
(\ref{restriction}) and (\ref{fourierseesaw}).
To this end we observe the following:
\begin{lemma}\label{basicidentity}
{\rm (i)} Suppose $0\neq \lambda\in K_1'$ and
$n|\lambda$, $n\in\mathbb{N}$ (i.e. $\lambda/n\in K_1'$).
Then one has 
\begin{align}\label{Fouriecoefficientofrestriction}
\sum_{\substack{\varepsilon\in F'/K\\ \pi_{K_1}(\varepsilon)=\lambda/n}}
\be((n\varepsilon,z'))
C_{L,K}\left(\frac{q(\lambda)}{n^2},\varepsilon\right)
=\sum_{\substack{\gamma\in L_1', n|\gamma\\ \gamma|_{K_1}=\lambda}}
\sum_{\substack{\delta\in M'/L\\ \pi(\delta)= \gamma}}
\be((n\delta,z'))c\left(\frac{q(\gamma)}{n^2},\delta\right).
\end{align}

\medskip
\noindent
{\rm (ii)} For $f(\tau)\in S_\nu(D_L)$ one has 
that $\Theta_{L,K}(f,\tau)_{K_1}=\Theta_{L_1,K_1}(f_{L_1},\tau)$.
\end{lemma}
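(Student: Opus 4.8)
The plan is to reduce both identities to a bookkeeping statement about the Fourier coefficients $c(n,\delta)$ of the fixed cusp form $f$: once the definitions are unwound, each side of (i), resp.\ (ii), becomes a sum of the numbers $c(n,\delta)$ over an explicit family of cosets, and the content is that the two families enumerate the same terms. Before doing this I would record three structural facts about the lattices $L\supset L_1\supset K_1$ and $L\supset K\supset K_1$. First, since $\langle z,z'\rangle_\Q\perp(L_1)_\Q$ and $K_1$ is primitive in $L_1$, the orthogonal complement of $K_\Q$ in $L_\Q$ equals the orthogonal complement of $(K_1)_\Q$ in $(L_1)_\Q$; consequently $x\mapsto x_{L_1}$, $x\mapsto x_K$, $x\mapsto x_{K^\perp}$ are the components of the single decomposition $L_\Q=(K_1)_\Q\oplus(K_1)_\Q^{\perp_{L_1}}\oplus\langle z,z'\rangle_\Q$, so $x_{K^\perp}=(x_{L_1})_{(K_1)^\perp}$ for every $x$ and the quadratic values entering the exponents on the two sides coincide. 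Second, because $\zeta'\in K\subset L$ with $(z,\zeta')=N$, I may take $\zeta=\zeta'$ in the definition (\ref{prM'}) of $\pi$; with this choice $\zeta_{K^\perp}=0$, and a short computation from (\ref{prM'}) gives $\pi(K)=K_1$, $\pi(\delta)_{(K_1)^\perp}=\delta_{K^\perp}$ for $\delta\in M'$, and the key compatibility
\[
\pi_{K_1}\circ p_K=p_{K_1}\circ\pi\qquad\text{on }M'.
\]
Third, $z'\in K'$ and $z\in K_\Q$, so for $\lambda\in L'$ one has $(\lambda,z')=(\lambda_K,z')$ and $(\lambda,z)=(\lambda_K,z)$; hence the conditions ``$\lambda\in M'$'' (the $M'$ of $L$) and ``$p_K(\lambda)\in F'/K$'' are equivalent, and the phase $\be((n\lambda,z'))$ is a function of $p_K(\lambda)$ alone.

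To prove (ii) I would compute, for each $\sigma\in D_{K_1}$, the $\mathfrak{e}_\sigma$-component of both sides. Unwinding $\Theta_{L,K}(f,\tau)=\langle f,\Theta_{L,K}(\tau)\rangle_L$ (using $\overline{\be(\overline\tau\,q)}=\be(-\tau q)$ for $q\in\R$, and that $K_\Q^\perp$ is negative definite, which is why the result lies in $S_\nu(D_K)$) and then applying the ``down from $K$ to $K_1$'' operation of Section 5, the left-hand side of (ii) has $\mathfrak{e}_\sigma$-component $\sum_\lambda\be(-\tau q(\lambda_{K^\perp}))f_{L+\lambda}(\tau)$, the sum over the classes $\lambda\in M'/K$ with $\pi_{K_1}(p_K(\lambda))=\sigma$, the index set being identified by the third remark. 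Unwinding $\Theta_{L_1,K_1}(f_{L_1},\tau)=\langle f_{L_1},\Theta_{L_1,K_1}(\tau)\rangle_{L_1}$ together with the definition of $f_{L_1}$ (the case $(r,t)=(0,0)$), the right-hand side has $\mathfrak{e}_\sigma$-component $\sum_\rho\be(-\tau q(\rho_{(K_1)^\perp}))\sum_\delta f_{L+\delta}(\tau)$, the outer sum over $\rho\in L_1'/K_1$ with $p_{K_1}(\rho)=\sigma$ and the inner sum over $\delta\in M'/L$ with $\pi(\delta)\equiv\rho$. Since $\pi(K)=K_1$, the assignment $\delta\mapsto(\delta\bmod K,\ \pi(\delta)\bmod K_1)$ matches the two families of terms: the residue conditions agree by $\pi_{K_1}\circ p_K=p_{K_1}\circ\pi$, the exponents by $\rho_{(K_1)^\perp}=\delta_{K^\perp}$ and $x_{K^\perp}=(x_{L_1})_{(K_1)^\perp}$, and $f_{L+\delta}$ is unchanged since the $D_L$-class is.

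For (i) I would unfold $C_{L,K}$ by (\ref{coefficientsofthethetacontraction}), so that the left-hand side becomes $\sum_\varepsilon\sum_{\mu:\,p_K(\mu)=\varepsilon}\be((n\varepsilon,z'))\,c\!\left(\tfrac{q(\lambda)}{n^2}+q(\mu_{K^\perp}),\,\mu+L\right)$; the third remark collapses ``$\varepsilon\in F'/K$, $\pi_{K_1}(\varepsilon)=\lambda/n$, $p_K(\mu)=\varepsilon$'' into the single condition ``$\mu\in M'/K$, $\pi_{K_1}(p_K(\mu))=\lambda/n$'', and replaces $\be((n\varepsilon,z'))$ by $\be((n\mu,z'))$. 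On the right-hand side I would substitute $\delta$ for $\mu$ (both running over $M'$-classes) and put $\gamma/n:=\pi(\delta)$; then $\gamma\in L_1'$ with $n\mid\gamma$, the constraint $\pi_{K_1}(p_K(\delta))=\lambda/n$ becomes $p_{K_1}(\gamma/n)=\lambda/n$, i.e.\ $\gamma|_{K_1}=\lambda$, and the index of $c$ matches because $q(\gamma)/n^2=q((\gamma/n)_{K_1})+q((\gamma/n)_{(K_1)^\perp})=q(\lambda/n)+q(\delta_{K^\perp})=q(\lambda)/n^2+q(\mu_{K^\perp})$, while the phases $\be((n\delta,z'))=\be((n\mu,z'))$ agree trivially. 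The remaining point is purely combinatorial: as $\delta$ ranges over the $M'/K$-classes with fixed image $\gamma/n=\pi(\delta)$ mod $K_1$, repackaging them into $M'/L$-classes with $\pi(\delta)=\gamma/n$ is exactly what the inner sum on the right of (\ref{Fouriecoefficientofrestriction}) records, again by $\pi(K)=K_1$.

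The individual computations above are routine; the work — and what I expect to be the main obstacle — is organizing the reindexing without double-counting or dropping terms, keeping straight the three nested pairs of cosets ($M'/K$ versus $M'/L$; $L_1'/K_1$ versus $L_1'/L_1$; and $D_{K_1}$) and the distinction between genuine lattice elements and their residue classes, and establishing the single identity $\pi_{K_1}\circ p_K=p_{K_1}\circ\pi$ on $M'$ on which both parts hinge. Once that identity and the normalization $\zeta=\zeta'$ are in place, (ii) is immediate and (i) follows by carrying the phase factors $\be((n\cdot,z'))$ and the divisibility $n\mid\gamma$ through the same bijection.
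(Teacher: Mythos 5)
Your proposal is correct and follows essentially the same route as the paper: both unfold $C_{L,K}$ via (\ref{coefficientsofthethetacontraction}), collapse the double sum over $\varepsilon$ and $\mu$ into a single sum over $\mu\in M'/K$ with $\pi_{K_1}(p_K(\mu))=\lambda/n$, and conclude by the reindexing $\mu\mapsto(\lambda+n\mu_{K^\perp},\mu+L)$ onto the pairs $(\gamma,\delta)$ of the right-hand side, for which your compatibility $\pi_{K_1}\circ p_K=p_{K_1}\circ\pi$ (with $\zeta=\zeta'$) is exactly the needed ingredient. The only cosmetic difference is that you prove (ii) first and directly at the level of $\mathfrak{e}_\sigma$-components, whereas the paper deduces (ii) from the $n=1$ case of the same bijection used for (i).
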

\begin{proof}
We apply (\ref{coefficientsofthethetacontraction}) to
rewrite the left-hand side of (\ref{Fouriecoefficientofrestriction}) as
\begin{align*}
&\sum_{\substack{\varepsilon\in F'/K\\ \pi_{K_1}(\varepsilon)=\lambda/n}}
\be((n\varepsilon,z'))
\sum_{\substack{\mu\in L'/K\\ p_K(\mu)=\varepsilon}}
c\left(\frac{q(\lambda)}{n^2}+q(\mu_{K^\perp}), \mu+L\right)\\
&\qquad=\sum_{\substack{\mu\in M'/K\\\pi_{K_1}(p_K(\mu))=\lambda/n}}
\be((n\mu,z'))
c\biggl(\frac{q(\lambda+n\mu_{K^\perp})}{n^2},\mu+L\biggr).
\end{align*}
Then (\ref{Fouriecoefficientofrestriction}) is obtained,
if we apply the bijection 
\begin{align*}
&\{\mu\in M'/K\mid \pi_{K_1}(p_K(\mu))=\lambda/n\}\\
&\qquad \qquad \simeq 
\{(\gamma,\delta)\in L'_1\times M'/L\mid n|\gamma,\ \gamma|_{K_1}=\lambda,\ 
\pi(\delta)=\gamma/n\},
\end{align*}
sending $\mu$ to $(\lambda+n\mu_{K_\R^\perp}, \mu+L)$
for a given $\lambda\in K_1'$ with $n|\lambda$, to
rewrite the right-hand side of the identity above.

The same bijection in the case where $n=1$ is 
again applied to give the equality
\begin{equation}\label{zemeln=1}
\sum_{\substack{\varepsilon\in F'/K\\ \pi_{K_1}(\varepsilon)=\lambda}}
\sum_{\substack{\mu\in L'/K\\ p_K(\mu)=\varepsilon}}
c(m+q(\mu_{K_\R^\perp}),\mu)=\sum_{\substack{\gamma\in L_1'\\ 
\gamma|_{K_1}=\lambda}}\sum_{\substack{\delta\in M'/L\\ \pi(\delta)=\gamma}}
c(m+q(\gamma_{(K_1)_\R^\perp}), \delta)
\end{equation}
for $\lambda\in K_1'$ and $m\in q(\lambda)+\Z$.
This shows the claimed equality in (ii),
since each side of  (\ref{zemeln=1}) equals the $(m,\lambda)$-th 
Fourier coefficient of 
$\Theta_{L,K}(f,\tau)_{K_1}$ or $\Theta_{L_1,K_1}(f_{L_1},\tau)$
respectively.
Thus the proof is finished.
\end{proof} 

By putting (\ref{Fouriecoefficientofrestriction}) into (\ref{fourierseesaw}),
we see that (\ref{fourierseesaw}) and (\ref{restriction}) coincide for 
every nonzero $\lambda\in K_1'$.
As to the constant terms 
Lemma \ref{basicidentity} (ii) and \cite[Theorem 2.6]{Zeb}
applied to the pairing
 $(\Theta_{L_1,K_1}(f_{L_1},\tau), 
\Theta_{K_1}(\tau, g_1'; h_{\pm k}^2))_\tau$ conclude their coincidence.
Thus we have checked the equality of the right-hand side of (\ref{restriction})
and (\ref{fourierseesaw}) as desired.

\vskip1cm

\end{document}